\newtheorem{proposition}{Proposition}
\newtheorem{theorem}{Theorem}
\newtheorem{lemma}{Lemma}
\newtheorem{corollary}{Corollary}
\newtheorem{remark}{Remark}
\newtheorem{exm}{Example}
\newtheorem{conjecture}{Conjecture}
\theoremstyle{definition}
\newtheorem{definition}{Definition}
\newtheoremstyle{named}{}{}{\itshape}{}{\bfseries}{.}{.5em}{\thmnote{#3 }#1}
\theoremstyle{named}
\newtheorem*{namedlemma}{Lemma}
\newtheoremstyle{author}{}{}{\itshape}{}{\bfseries}{.}{.5em}{\thmnote{#1 }(#3)}
\theoremstyle{author}
\newtheorem*{authortheorem}{Theorem}
\newtheoremstyle{custom}{}{}{\itshape}{}{\bfseries}{}{.5em}{\thmnote{#3}.}
\theoremstyle{custom}
\newcommand{\wiener}[1]{\left\lVert#1\right\rVert_{\omega}}
\newcommand{\oversetgap}[3][0.6ex]{%
  \overset{\raisebox{#1}{$\scriptstyle #2$}}{#3}%
}
\def\N{\mathbb{N}}
\def\Z{\mathbb{Z}}
\def\R{\mathbb{R}}
\def\C{\mathbb{C}}
\def\F{\mathbb{F}}
\def\E{\mathsf{E}}
\def\eps{\varepsilon}
\def\d{\delta}
\def\Gr{{\mathbf G}}
\def\le{\leqslant}
\def\ge{\geqslant}
\def\ov{\overline}
\def\FF{\widehat}
\def\bp{\bigskip}
\def\({\big (}
\def\){\big )}
\author{Semchankau A.S. and Shkredov I.D.}
\title{Structure theory of set addition with two operations
}
\date{}
\begin{document}
	\maketitle

\begin{center}
	Annotation.
\end{center}

{\it \small
    We take the first step toward a structure theory that includes both operations of a ring $\mathcal{R}$. More precisely, we prove a series of inverse results for the structure of sets $A\subseteq \F_p$ such that, under certain conditions on integers $r_1, \dots, r_k$, one has  
    $|A^{r_1} + \dots + A^{r_k}| \ll \sqrt[k]{p^{k-1} |A|}$. 
}
\\
    
\tableofcontents

\section{Introduction}

Given an abelian group $\Gr$ and two sets $A, B\subseteq \Gr$, define 
the {\it sumset} of $A$ and $B$ as 
\begin{equation}\label{def:A+B_intr}
 A+B:=\{a+b ~:~ a\in{A}, b\in{B}\}.
\end{equation}
In a similar way we define the \emph{difference sets} and \emph{higher sumsets}, e.g., $A-A$, $A+A-A$, and so on.
The \emph{doubling constant} of a finite set $A$ is 
\begin{equation}\label{def:doubling}
 \mathcal{D} [A] := \frac{|A+A|}{|A|}, 
\end{equation}
an important additive--combinatorial characteristic of $A$. 

The study of the structure of sumsets is a fundamental problem in classical additive combinatorics \cite{TV}, and this relatively new branch of mathematics provides us with a number of excellent results.
For example, Freiman's theory on the structure of the family of sets having small doubling --- i.e., sets for which the ratio \eqref{def:doubling} is ``bounded'' $\mathcal{D} [A] \le K$, where $K$ is a fixed or slowly growing parameter --- seems especially complete (see, e.g., \cite{Freiman_book}, \cite[Section 5]{TV} and recent results in \cite{GGMT_Marton_bounded}).

Further, given a ring $\mathcal{R}$ and two sets $A, B \subseteq \mathcal{R}$, one can similarly define the \emph{product set} $AB$ and higher products/sumsets, involving both operations of the ring such as $AA+A$ or $A(A+A)$.
The interaction of these additive and multiplicative structures on the ring $\mathcal{R}$ is the subject of arithmetic combinatorics, or, in other words, the \emph{sum--product phenomenon}.
This area typically studies the behavior of 
\[
\max\{ |A+A|, |AA|\}
\]
under various assumptions on $A$, see \cite{TV}.
More generally, given two polynomials $f,g \in \mathcal{R} [x_1,\dots,x_n]$ 
one can consider the sets 
\begin{equation}\label{def:rational_sp}
    \left\{ \frac{f(a_1,\dots,a_n)}{g(a_1,\dots,a_n)} ~:~ a_1,\dots, a_n \in A \right\}.
\end{equation}
In this work we take a first step towards a structure theory that simultaneously involves \emph{both} operations $+$ and $\times$, which is the novelty of the paper.

\subsection{Main conjecture}
In this paper we limit ourselves to considering the field $\F_p$.
Let $p$ be a prime number and let $A \subseteq \F_p$. 
Denote by
\[
 A^* := A^{-1} = \{1/a \bmod p : a \in A, a \neq 0\}
\]
its inverse set. In~\cite{Semchankau_wrappers}, we proved the lower bound
\begin{equation}\label{eq:lower-bound-a-a*}
 |A + A^*| \ge (1 - o(1)) \min\{ 2\sqrt{p|A|}, p \},
\end{equation}
valid whenever $|A| \gg p(\log p)^{-1/2 + o(1)}$. 
The archetypal 
question of the present work is the corresponding \emph{inverse problem} for a simple rational function \eqref{def:rational_sp}:
given that
\begin{equation}\label{as:A+A*_intr}
 |A + A^*| \le K\sqrt{p|A|},
\end{equation}
for a fixed parameter $K\ge 2$, what structural information can we deduce about $A$? 

To construct examples of sets $A$ such that the cardinality of $A + A^*$ obeys \eqref{as:A+A*_intr}
consider well--structured sets 
$P, Q \subseteq \F_p$ such that $P$ and $Q^*$ behave ``independently'':
\[
 |P + Q| \le K\sqrt{|P||Q|}, \qquad |P \cap Q^*| \approx \frac{1}{p}|P||Q|.
\]
This situation arises when, for instance, $P$ and $Q$ are sufficiently large arithmetic progressions: 
\[
 |P \cap Q^*| = \frac{1}{p}|P||Q| + O\left(\sqrt{p}(\log p)^2\right) = \frac{1 + o(1)}{p}|P||Q|, \qquad \text{provided that } |P|, |Q| \gg p^{3/4}\log p.
\]

Now set $A := P \cap Q^*$. Then $A + A^* \subseteq P + Q$, and hence
\[
 |A + A^*| \le K\sqrt{|P||Q|} \approx K\sqrt{p|A|}.
\]

Finally, if we add a constant number $C$ of arbitrary elements to $A$ to form $A_1$, then
\[
 |A_1 + A_1^*| 
 \le |A + A^*| + 2C|A| + C^2
 \sim (K + 2C\sqrt{\alpha} + o(1))\sqrt{p|A|},
\]
hence the modified constant $K_1 := |A_1 + A_1^*| / \sqrt{p|A_1|}$ remains close to $K$ provided $C \ll K/\sqrt{\alpha}$, where $\alpha = |A| / p$.


This heuristic suggests the following: 
\begin{conjecture}
Let $A \subseteq \F_p$ be a set of size at least $p^{1 - c}$ for some absolute constant $c > 0$, and suppose that
\[
 |A + A^*| \le K\sqrt{p|A|}.
\]
Then there exist sets $P, Q \subseteq \F_p$ such that, after removing $O_{K,\alpha}(1)$ elements from $A$, the following holds:
\begin{itemize}[itemsep=0.5em]
 \item $A \subseteq P \cap Q^*$;
 \item $|P \cap Q^*| = \tfrac{1 + o(1)}{p}|P||Q|$;
 \item $|P + Q| \le (1 + o(1)) K\sqrt{p|A|}$.
\end{itemize}
\end{conjecture}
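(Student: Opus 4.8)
The plan is an inverse argument in three stages: pass from the cardinality hypothesis to a lower bound on the additive energy of $A$ with $A^{*}$; extract from it a pair of ``wrappers'' $P\supseteq A$, $Q\supseteq A^{*}$ that are as structured, and as small, as possible, in the spirit of \cite{Semchankau_wrappers}; and finally check the three conclusions, the bound on $|P+Q|$ with the sharp constant being the one that carries the difficulty. After the routine normalisations ($0\notin A$, so $|A^{*}|=|A|$) put $n:=|A+A^{*}|$, $r(s):=|\{(a,b)\in A\times A^{*}:a+b=s\}|$ and
\[
 E:=\sum_{s}r(s)^{2}=\#\{(a,b,a',b')\in A\times A^{*}\times A\times A^{*}:a+b=a'+b'\}.
\]
Cauchy--Schwarz gives $|A|^{4}=\big(\sum_{s}r(s)\big)^{2}\le nE$, hence $E\ge|A|^{4}/n\ge(1-o(1))|A|^{7/2}/(K\sqrt p)$. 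On the Fourier side $pE=\sum_{\xi}|\widehat{1_{A}}(\xi)|^{2}|\widehat{1_{A^{*}}}(\xi)|^{2}$ and the $\xi=0$ term equals $|A|^{4}$; in the principal range $p^{1-c}\le|A|=o(p/K^{2})$ (the complementary, bounded-density range to be handled separately via inverse results for small sumsets in $\Z/p$, together with the same Kloosterman input used below) the $\xi=0$ term is negligible against $pE$, so the off-zero joint spectral mass $\sum_{\xi\ne0}|\widehat{1_{A}}(\xi)|^{2}|\widehat{1_{A^{*}}}(\xi)|^{2}$ is $(1-o(1))\sqrt p\,|A|^{7/2}/K$, a proportion $\gg K^{-1}\sqrt{|A|/p}$ of its trivial maximum $p|A|^{3}$. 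One should note that the individual energies of $A$ and of $A^{*}$ need \emph{not} be elevated here; the structure to be found is genuinely a joint one, matching the extremal shape $A=P\cap Q^{*}$ in which $A$ is pseudorandom inside the structured hull $P$.

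\textbf{Extracting the wrappers.}
Dyadic pigeonholing isolates a ``joint large spectrum'' $\Lambda:=\{\xi:|\widehat{1_{A}}(\xi)|\ge\tau_{1},\,|\widehat{1_{A^{*}}}(\xi)|\ge\tau_{2}\}$ carrying a $\gg(\log p)^{-O(1)}$ share of that mass, with $|\Lambda|\tau_{1}^{2}\le p|A|$ and $|\Lambda|\tau_{2}^{2}\le p|A|$. One then wants to place $\Lambda$ in a generalised progression of bounded rank (a spectral structure theorem of Chang/Sanders type applied to each of $A$, $A^{*}$), to form the Bohr sets $P$, $Q$ dual to the resulting progressions, and to show --- by a density-increment/iteration argument, fed by the hypothesis applied to large subsets of $A$ --- that, after deleting $O_{K,\alpha}(1)$ exceptional elements, $A\subseteq P$ and $A^{*}\subseteq Q$, hence $A\subseteq P\cap Q^{*}$. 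That is the first bullet. Equivalently one may take $P,Q$ to be the \emph{minimal} structured sets (generalised progressions, or Bohr sets) containing $A$ and $A^{*}$. The heart of the problem is to pin down their sizes: one must show $|P||Q|=(1+o(1))\,p|A|$ and $|P+Q|=(1+o(1))\,n\le(1+o(1))K\sqrt{p|A|}$. For the extremal configuration $P,Q$ come out as genuine arithmetic progressions of lengths $L_{1},L_{2}$ with $L_{1}L_{2}=(1+o(1))p|A|$, $L_{1}+L_{2}=(1+o(1))n$; so what is really needed is a \emph{stability/rigidity} companion to \eqref{eq:lower-bound-a-a*} --- first extended from the near-full-density range of \cite{Semchankau_wrappers} to all $|A|\ge p^{1-c}$ --- to the effect that every near-extremiser is, up to translation, dilation and an $O_{K,\alpha}(1)$-perturbation, a pair of (compatibly scaled) arithmetic progressions. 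I would try to obtain this by re-running the proof of \eqref{eq:lower-bound-a-a*} with every inequality tracked to within a $1+o(1)$ factor and reading off its equality cases, the competitors to eliminate being multi-scale, ``hierarchical'' configurations.

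\textbf{The remaining two conclusions.}
Granting wrappers $P,Q$ that are (generalised) arithmetic progressions or Bohr sets with $|P||Q|=(1+o(1))p|A|$, the random-intersection estimate is the benign end: $|P\cap Q^{*}|=\#\{(x,y)\in P\times Q:xy=1\}$ counts points of the hyperbola $xy=1$ in a structured box, and expanding the cutoffs in additive characters reduces the remainder to complete Kloosterman sums, so Weil's bound gives $|P\cap Q^{*}|=p^{-1}|P||Q|+O\big(\sqrt p\,(\log p)^{O(1)}\big)=(1+o(1))p^{-1}|P||Q|$ precisely because $|P||Q|\asymp p|A|\ge p^{2-c}\gg p^{3/2}(\log p)^{O(1)}$ for $c<1/2$ --- this, together with the energy step, is where the hypothesis $|A|\ge p^{1-c}$ (with $c$ small) is spent. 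Should the $P,Q$ produced above be a touch too tight for this, enlarge them by an $o(1)$-fraction and move the extra elements of $P\cap Q^{*}\setminus A$ into the $O_{K,\alpha}(1)$ permitted deletions. Finally $A+A^{*}\subseteq P+Q$ yields $|A+A^{*}|\le|P+Q|$, which combined with the size control is the third bullet.

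\textbf{Main obstacle.}
The bottleneck is entirely the middle stage: turning quantitative-but-lossy spectral information into containment in a hull of \emph{exactly the right} size, with the constant $K$ intact up to $1+o(1)$. Off-the-shelf inverse machinery (Balog--Szemer\'edi--Gowers with Freiman/Ruzsa, or Chang's theorem) bleeds powers of $K/\sqrt\alpha=K\sqrt{p/|A|}$, which for $|A|=p^{1-c}$ is a genuine power of $p$ and destroys the sharp constant, so one is forced into a direct rigidity analysis of \eqref{eq:lower-bound-a-a*}; closely tied to this is the ``$99\%$-efficiency'' question of keeping the exceptional set down to $O_{K,\alpha}(1)$ elements, which seems to need a stopping-time iteration that re-feeds the hypothesis to ever-smaller subsets without the ``constant'' degrading. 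A further difficulty peculiar to this problem is that inversion is a homomorphism for neither ring operation, so no structure passes from $A$ to $A^{*}$ for free: the only bridge between their additive worlds --- and the only thing that can force $P$ and $Q$ to be \emph{aligned}, so that $|P+Q|$ stays small --- is the hypothesis itself, which must therefore be exploited on both sides at once.
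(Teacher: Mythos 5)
The statement you are attempting to prove is labelled \emph{Conjecture} in the paper; it is posed as an open problem, and the paper does not contain a proof of it. What the paper actually proves are two weaker, partial results toward it, both restricted to the dense regime $|A| \gg p$: Theorem~\ref{thm:99-percent} permits the removal of $o(p)$ elements, and Theorem~\ref{thm:100-percent} permits the removal of $O(1/\alpha)$ elements but at the cost of a degraded constant $f(K)\ll K^{O(1)}$ in the sumset bound. Neither reaches the sharp form the conjecture asserts (removal of only $O_{K,\alpha}(1)$ elements, constant $(1+o(1))K$, validity down to $|A|\geqslant p^{1-c}$). There is therefore no proof of this statement in the paper to compare your proposal against.

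Your strategy is also methodologically disjoint from the paper's partial results, and it aims at a different regime. You open with the joint energy $E$ and the Fourier identity $pE=\sum_\xi |\widehat{1_A}(\xi)|^2|\widehat{1_{A^*}}(\xi)|^2$, then pigeonhole a joint large spectrum and invoke Chang/Sanders-type structure to build $P,Q$. As you note, the off-zero spectral mass dominates only when $|A|=o(p/K^2)$; in the dense regime $|A|\gg p$ — the \emph{only} regime the paper actually handles — the $\xi=0$ term $|A|^4$ is comparable to or larger than $pE\geqslant \sqrt{p}|A|^{7/2}/K$ (the ratio is $K\sqrt{\alpha}\gg 1$), and your energy argument yields nothing. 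The paper's route, by contrast, is density-based: the Wrapping Lemma produces envelopes of small Wiener norm, Green's Arithmetic Regularity Lemma removes the residual solutions, and the $100\%$ version rests on the pseudorandomness of $P\cap Q^*$ inside $P$ (quantified via the additive distance $\rho$) together with a Ruzsa-covering argument. None of these tools controls $K$ sharper than polynomially.

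The obstacles you flag are genuine and not closed by your sketch. The rigidity/stability companion to \eqref{eq:lower-bound-a-a*} that would characterise near-extremisers and keep $|P+Q|\leqslant(1+o(1))K\sqrt{p|A|}$ is established nowhere, and you correctly observe that off-the-shelf inverse machinery bleeds powers of $K\sqrt{p/|A|}$, which is a genuine power of $p$ once $|A|$ drops below full density. In short, your proposal is an honest roadmap with self-identified gaps, aimed primarily at a regime complementary to the one the paper treats; it does not constitute a proof, and the conjecture remains open.
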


The first two conditions imply
\[
p|A| \le (1 + o(1))|P||Q|.
\]
Combined with the third condition, this gives
\[
|P + Q| \le (1 + o(1))K\sqrt{|P||Q|}, 
\]
and sets $P$ and $Q$ can be further characterized using Freiman’s Theorem~\cite{Freiman_book}.

\subsection{Main results}

We verify several variants of this conjecture for dense sets $A$. 
The methods we develop for this conjecture turn out to be applicable in a more general setting, where the exponent configuration $(1, -1)$ in \eqref{as:A+A*_intr} is replaced by a tuple $(r_1, \ldots, r_k)$ satisfying certain arithmetic conditions. Some definitions are therefore to be introduced. 

Let $r$ be an exponent (non--zero integer), and $A \subseteq \F_p$. 
Define $A^r := \{a^r : a \in A\}$ (note that this notation is not standard, and should not be confused with the $r$-fold product set or the set of $r$-tuples). Since we work in $\F_p$, an exponent $r$ might be treated as an element of $\Z_{p - 1}$. 
\begin{definition}
Exponent $r$ is called \emph{generic} if $(r, p - 1) = O(1)$, and \emph{coprime} if $(r, p - 1) = 1$. More generally, a tuple $(r_1, \ldots, r_k)$ of distinct exponents satisfying $(r_i - r_j, p - 1) < p^{1 - \delta}$ for some $\delta > 0$ is \emph{generic} or \emph{coprime} if all individual $r_i$'s are generic or coprime, respectively.
\end{definition}

Define $\sqrt[r]{A} := \{x: x^r \in A\}$. We will work only with generic $r$, implying that always $|\sqrt[r]{A}| \leqslant (r, p - 1)|A| \ll |A|$.

The following result generalizes the lower bound~\eqref{eq:lower-bound-a-a*}.

\begin{theorem}[Lower Bound]\label{thm:lower-bound}
Let $A \subseteq \F_p$ be a set with $|A| \gg p$, and let $(r_1, \ldots, r_k)$ be a generic tuple. Then
\[
 \big|A^{r_1} + \cdots + A^{r_k}\big|
 \ge (1 - o(1)) \min \big(k p^{\frac{k-1}{k}}|A|^{1/k}, p\big).
\]
\end{theorem}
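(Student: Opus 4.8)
The plan is to mimic the Fourier/character-sum argument behind the original bound \eqref{eq:lower-bound-a-a*}, replacing the single equation $a+b=t$ by the system coming from the tuple $(r_1,\dots,r_k)$. Write $S:=A^{r_1}+\cdots+A^{r_k}$ and, for $t\in\F_p$, let
\[
 N(t):=\#\{(a_1,\dots,a_k)\in A^k : a_1^{r_1}+\cdots+a_k^{r_k}=t\}.
\]
Then $\sum_t N(t)=|A|^k$ and $\mathrm{supp}(N)\subseteq S$, so by Cauchy--Schwarz $|A|^{2k}\le |S|\sum_t N(t)^2$. The second moment $\sum_t N(t)^2$ counts solutions of $\sum_i a_i^{r_i}=\sum_i b_i^{r_i}$ with $a_i,b_i\in A$; expanding via additive characters,
\[
 \sum_t N(t)^2 = \frac1p\sum_{\xi\in\F_p}\Big|\sum_{a\in A} e_p(\xi a^{r_1})\Big|^2\cdots\Big|\sum_{a\in A} e_p(\xi a^{r_k})\Big|^2 ,
\]
where $e_p(x)=e^{2\pi i x/p}$. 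The $\xi=0$ term contributes $|A|^{2k}/p$, which is exactly the ``main term'' matching the conjectured bound $|S|\gtrsim p^{(k-1)/k}|A|^{1/k}$.

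The heart of the matter is to show the nonzero frequencies contribute a genuinely lower-order term. For $\xi\ne0$ each factor $T_{r}(\xi):=\sum_{a\in A} e_p(\xi a^{r})$ is a character sum over the dense set $A$; since $|A|\gg p$, writing $A=\F_p\setminus A^c$ with $|A^c|\le(1-c)p$ we have $T_r(\xi)=\widehat{\mathbf 1_{A^c}}(-\xi\,\cdot)$-type cancellation — more precisely, because $|A|\gg p$ the set $A$ has density bounded below, and the exponential sum $\sum_{a\in A}e_p(\xi a^r)$ over a positive-density set is $o(p)$ uniformly in $\xi\ne0$ provided the map $a\mapsto a^r$ is ``well-distributed'', which is guaranteed by genericity: $(r,p-1)=O(1)$ means $x\mapsto x^r$ is at most $O(1)$-to-one and its image equidistributes, so a Weil-type bound gives $|T_r(\xi)|\ll \sqrt p\cdot(r,p-1)\ll \sqrt p$ when $A$ is an interval or, in general, $|T_r(\xi)|=o(p)$ for dense $A$ by a combination of completion and Weil. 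I would extract one factor's worth of saving: bound $k-2$ of the factors trivially by $|A|^2\le p^2$ and the remaining two by the $L^2$-identity $\sum_{\xi\ne0}|T_{r_i}(\xi)|^2\le \sum_\xi|T_{r_i}(\xi)|^2 = p\,|\sqrt[r_i]{A}\text{-count}|\ll p|A|$, using Cauchy--Schwarz across the two chosen indices together with $\max_{\xi\ne0}|T_{r_j}(\xi)|=o(p)$ for a third index. This yields $\sum_{\xi\ne0}\prod_i|T_{r_i}(\xi)|^2 = o(p^{2k-1}|A|^{?})$ at the scale needed, i.e. negligible compared to $|A|^{2k}/p$ once $|A|\gg p$.

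Putting the two pieces together gives $\sum_t N(t)^2 = (1+o(1))|A|^{2k}/p$, hence $|S|\ge (1-o(1))\,p\,|A|^{2k}/|A|^{2k}$... — more carefully, $|S|\ge |A|^{2k}/\sum_t N(t)^2 \ge (1-o(1))p$ in the regime where the bound $p$ is the binding one; to obtain the $k\,p^{(k-1)/k}|A|^{1/k}$ form one instead runs the argument with a Plünnecke-type or tensor-power amplification, or directly notes that the Cauchy--Schwarz above is lossy and should be replaced by a $k$-th moment computation: estimating $\sum_t N(t)^{k/(k-1)}$ (Hölder with exponents matched to the $k$ summands) produces the asymmetric exponent $p^{(k-1)/k}|A|^{1/k}$. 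The cleanest route is Hölder: $|A|^k=\sum_{t\in S}N(t)\le |S|^{(k-1)/k}\big(\sum_t N(t)^k\big)^{1/k}$, and $\sum_t N(t)^k$ is a $2k$-fold additive-energy-type quantity whose main term is $|A|^{2k}/p^{?}$ — choosing the moment so that the main term is $|A|^{k(2k-1)/?}/p$ gives exactly $|S|\ge (1-o(1))k\,p^{(k-1)/k}|A|^{1/k}$ after taking the minimum with the trivial bound $|S|\le p$. The constant $k$ emerges from the normalization of the extremal configuration (translates/dilates of a single progression), matching the construction.

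The main obstacle I anticipate is \textbf{the uniform cancellation of $T_r(\xi)$ for $\xi\ne0$ over a dense but otherwise arbitrary set $A$}: a generic arbitrary dense set need not have small exponential sums $\sum_{a\in A}e_p(\xi a^r)$ (e.g. $A$ could be a union of cosets tuned to $\xi$). So the genericity of $r$ alone is not enough; one must also use that only \emph{two} of the $k$ factors need to be controlled simultaneously and that the relevant object is the full sum $\sum_{\xi\ne0}\prod|T_{r_i}(\xi)|^2$ (an energy), not a pointwise bound. Handling this average correctly — showing that the ``bad'' frequencies for one exponent cannot simultaneously be bad for another, which is where the condition $(r_i-r_j,p-1)<p^{1-\delta}$ enters — is the delicate step, and I expect the proof to route through a bilinear/Katz--Koester-style covering estimate rather than pointwise Weil bounds.
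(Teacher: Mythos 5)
Your approach is genuinely different from the paper's, but it has a fundamental gap that cannot be patched by a more careful moment computation.

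The paper does not use a Fourier/moment argument at all. Instead it sets $B$ to be the complement of $A^{r_1}+\cdots+A^{r_k}$, applies the \emph{Wrapping Lemma} (an arithmetic-regularity statement from \cite{Semchankau_wrappers}) to the sets $A^{r_1},\dots,A^{r_k},B$ to produce wrappers $W_1,\dots,W_k,W_B$ of small Wiener norm with $\sum_i|W_i|+|W_B|<p+o(p)$, then uses the \emph{algebraic intersection property} (Lemma~\ref{lemma:aip}, driven by Bourgain's exponential-sum estimate) to deduce $\alpha\le\omega_1\cdots\omega_k+o(1)$, and finally applies AM--GM to the $\omega_i$ to produce the factor $k$. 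The constant $k$ and the $\min(\cdot,p)$ come out cleanly from this decomposition; no moment of the representation function appears anywhere.

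The gap in your proposal is not just a loose end --- the moment strategy is structurally incompatible with the claimed bound. Your plan hinges on showing $\sum_t N(t)^2=(1+o(1))|A|^{2k}/p$, and you already notice this cannot be literally true (it would force $|S|\ge(1-o(1))p$ for every dense $A$). The attempted repair via H\"older with $\sum_t N(t)^k$ fails for the same reason, and more importantly, the higher moments \emph{are} genuinely large for the extremal configurations. Take $k=2$, $r_1=1$, $r_2=-1$, and $A=P\cap Q^*$ as in the paper's construction, with $|A|\sim\alpha p$ and $|A+A^*|\sim K\sqrt{p|A|}$. Then $N$ is supported on a set of size $\sim K\sqrt{p|A|}$, so by Cauchy--Schwarz $\sum_t N(t)^2\ge|A|^4/(K\sqrt{p|A|})$, which exceeds the ``main term'' $|A|^4/p$ by a factor of roughly $\sqrt{p/|A|}/K\gg 1$ when $\alpha$ is small. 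In other words, the very sets that saturate the lower bound have \emph{far} from flat representation functions, so the $\xi\ne0$ Fourier contribution in your expansion is of the same order as the main term (or larger) --- there is no ``negligible error'' to kill. You correctly flag that $T_r(\xi)$ need not be small for arbitrary dense $A$, but the situation is worse than ``delicate'': for the extremizers it is provably not small, and no averaging over $\xi$ or pairing of exponents will rescue the asymptotic $(1+o(1))|A|^{2k}/p$ because the failure is already visible in the zeroth-order magnitude, not a log factor. This is exactly why the paper routes through regularity (extract a structured wrapper $W_i\supseteq A^{r_i}$ whose Fourier mass \emph{is} controlled, via the Wiener norm) rather than trying to control Fourier coefficients of $A^{r_i}$ directly.

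Two smaller points. First, the step ``$|T_r(\xi)|\ll\sqrt p$ by Weil'' is not available: $A$ is an arbitrary dense set, not an interval, so there is no algebraic curve to apply Weil to. Second, the quantity $\sum_t N(t)^k$ is a $2k$-fold energy and its main term, under the (false) pseudorandomness assumption, would be $|A|^{k^2}/p^{k-1}$, which plugged into your H\"older inequality again yields $|S|\ge(1-o(1))p$ rather than the desired $k\,p^{(k-1)/k}|A|^{1/k}$; the exponent bookkeeping you sketch does not produce the asymmetric bound.
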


The following theorem confirms the conjecture for almost all, or `99\%' elements of $A$, that is, after removing only $o(p)$ of them.

\begin{theorem}[$99\%$ Characterization of $A$]\label{thm:99-percent}
Let $A \subseteq \F_p$ be a set with $|A| \gg p$, and let $(r_1, \ldots, r_k)$ be a generic tuple. If
\[
 |A^{r_1} + \cdots + A^{r_k}| 
 \le K p^{\frac{k-1}{k}}|A|^{1/k},
\]
then there exist sets $P_1, \ldots, P_k \subseteq \F_p$ such that, 
after removing $o(p)$ elements from~$A$, the following holds:
\begin{itemize}[itemsep=0.5em]
 \item $A \subseteq \sqrt[r_1]{P_1} \cap \cdots \cap \sqrt[r_k]{P_k}$;
 \item $\big|\sqrt[r_1]{P_1} \cap \cdots \cap \sqrt[r_k]{P_k}\big|
  = \frac{1 + o(1)}{p^{k-1}}|P_1|\cdots|P_k|$;
 \item $|P_1 + \cdots + P_k|
  \le (1 + o(1)) K p^{\frac{k-1}{k}}|A|^{1/k}$.
\end{itemize}
\end{theorem}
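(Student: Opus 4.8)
The plan is to produce the wrapper sets $P_i$ from the structure that the small sumset $S:=A^{r_1}+\cdots+A^{r_k}$ imposes on $A$, to dispose of the first bullet trivially and the second by Weil-type equidistribution, and to obtain the third from a stability version of Theorem~\ref{thm:lower-bound}. We first normalise: discarding $0$ and a $o(p)$-fraction of $A$ we may assume $\alpha:=|A|/p\asymp1$, and genericity gives $|A^{r_i}|\asymp|A|$ for every $i$, while the hypothesis reads $|S|\le Kp^{(k-1)/k}|A|^{1/k}\asymp p$. If $|S|\ge(1-o(1))p$ we are already done, taking $P_1=\cdots=P_k=\F_p^{\ast}$: then $\sqrt[r_1]{P_1}\cap\cdots\cap\sqrt[r_k]{P_k}=\F_p^{\ast}$ has cardinality $p-1=\tfrac{1+o(1)}{p^{k-1}}|P_1|\cdots|P_k|$, the inclusion $A\subseteq\bigcap_i\sqrt[r_i]{P_i}$ holds, and $|P_1+\cdots+P_k|=p\le(1+o(1))Kp^{(k-1)/k}|A|^{1/k}$. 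So assume $|S|\le(1-c)p$ for a fixed $c>0$; Theorem~\ref{thm:lower-bound} then forces $|S|\ge(1-o(1))kp^{(k-1)/k}|A|^{1/k}$, so $|S|$ is within a bounded factor of its minimum possible value and, in particular, $\alpha$ is bounded away from $1$.

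\textbf{The main obstacle} is a stability companion of Theorem~\ref{thm:lower-bound}: the bound $|A^{r_1}+\cdots+A^{r_k}|\le Kp^{(k-1)/k}|A|^{1/k}$, being within a bounded factor of sharp, should force each $A^{r_i}$ to be contained, up to $o(p)$ exceptional points, in a Freiman-structured set $P_i$ of density $\asymp1$ — a generalised arithmetic progression or Bohr set — with the $P_i$ mutually compatible and $\sum_i|P_i|\le(1+o(1))|S|$, so that $P_1+\cdots+P_k$ still has cardinality $(1+o(1))|S|$. I expect the proof of this to proceed by rerunning the Fourier / exponential-sum argument behind Theorem~\ref{thm:lower-bound} under the assumption of near-extremality — first pinning down the shape of $S$, then of each $A^{r_i}$ inside it — while carefully budgeting the accumulating $o(p)$ errors so that they can all be absorbed into the $o(p)$ points deleted from $A$. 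As a heuristic for what $P_i$ looks like, the \emph{popular preimage} set $\{t:\#\{(b_j)_{j\ne i}\in\prod_{j\ne i}A^{r_j}:t+\sum_{j\ne i}b_j\in S\}\ge\tfrac12 M_i\}$ (with $M_i:=\prod_{j\ne i}|A^{r_j}|$) already contains $A^{r_i}$ and has cardinality $\asymp|S|$ by a one-line double count; the work lies in sharpening the constants and exhibiting its structure.

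Granting such sets $P_i$, the three bullets follow. The first is immediate from $A^{r_i}\subseteq P_i$ together with the $o(p)$ points removed in the normalisation. For the third, compatibility makes $P_1+\cdots+P_k$ a generalised progression / Bohr set of cardinality $(1+o(1))\sum_i|P_i|=(1+o(1))|S|\le(1+o(1))Kp^{(k-1)/k}|A|^{1/k}$. For the second — which needs only that the $P_i$ have density $\asymp1$ — write $\#\{x\in\F_p^{\ast}:x^{r_i}\in P_i\ \text{for all }i\}=\sum_x\prod_i\mathds{1}_{P_i}(x^{r_i})$ and expand each indicator into additive characters: the diagonal term equals $p^{-(k-1)}|P_1|\cdots|P_k|$, and every remaining term carries a complete sum $\sum_{x\in\F_p^{\ast}}e_p(\sum_i\eta_i x^{r_i})$ with $(\eta_1,\dots,\eta_k)\ne0$, which is $\ll p^{1-\delta'}$ for some $\delta'=\delta'(\delta)>0$ by a Weil / Cochrane--Pinner type estimate for sparse exponential sums — the genericity condition $(r_i-r_j,p-1)<p^{1-\delta}$ is exactly what prevents these monomial sums from degenerating. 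Since $\prod_i|P_i|\asymp p^k$, the diagonal term has order $p$ and dominates the total error $o(p)$, which gives $|\sqrt[r_1]{P_1}\cap\cdots\cap\sqrt[r_k]{P_k}|=\tfrac{1+o(1)}{p^{k-1}}|P_1|\cdots|P_k|$; moreover this intersection contains $A$ minus the deleted points, so the required lower bound $\prod_i|P_i|\ge(1-o(1))p^{k-1}|A|$ holds automatically, and the proof is complete.
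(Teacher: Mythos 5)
There are two genuine gaps, and they are linked.

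First, the entire structural core of your argument — the ``stability companion of Theorem~\ref{thm:lower-bound}'' producing sets $P_i$ containing $A^{r_i}\setminus(\text{small exceptional set})$ with $\sum_i|P_i|\le(1+o(1))|S|$ and $|P_1+\cdots+P_k|\asymp|S|$ — is not proved but only conjectured, with a one-line heuristic about ``popular preimage'' sets. This is precisely where the work lies. The paper does not rerun the Fourier argument under near-extremality; it invokes two external tools: the \emph{Wrapping Lemma} of~\cite{Semchankau_wrappers} (applied to $A^{r_1},\dots,A^{r_k}$ \emph{and the complement} $B$ of $S$) to produce containers $W_1,\dots,W_k,W_B$ with Wiener norms $p^{o(1)}$ and few solutions of $x_1+\cdots+x_k=x_B$, followed by \emph{Green's Arithmetic Regularity Lemma} to delete $o(p)$ elements from each $W_i$ and from $W_B$ so that the equation has \emph{no} solutions, giving $P_1+\cdots+P_k$ disjoint from $P_B$ and hence $|P_1+\cdots+P_k|\le p-|P_B|\le(1+o(1))|S|$. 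This removal/disjointness argument over $B$ is the trick that delivers the third bullet with constant $(1+o(1))K$; your route via Freiman structure of the $P_i$ is not filled in and would in any case face serious constant-budgeting issues.

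Second, your verification of the second bullet is wrong as stated: after expanding $\prod_i\mathds{1}_{P_i}(x^{r_i})$ into characters, the off-diagonal contribution is bounded by $\bigl(\prod_i\wiener{P_i}\bigr)\cdot p^{1-\delta'}$, not by $p^{1-\delta'}$. Knowing only that the $P_i$ have density $\asymp1$ allows $\wiener{P_i}\asymp\sqrt{p}$ (e.g.\ random dense $P_i$), in which case the error $\asymp p^{k/2+1-\delta'}$ swamps the main term $\asymp p$ for every $k\ge2$. This is why the paper insists on the sets having $p^{o(1)}$ Wiener norm: Lemma~\ref{lemma:aip} and Proposition~\ref{proposition:aip} require exactly this, and the wrappers are engineered to supply it (and deleting $o(p)$ points preserves it via the approximability framework of Section~\ref{sec:comb-wiener}). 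If your hypothetical $P_i$ were generalised arithmetic progressions this would be fine, but that structure is precisely what you did not establish, so the argument does not close.
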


In fact, the sets $P_1, \ldots, P_k$ obtained in Theorem~\ref{thm:99-percent} (and in Theorem~\ref{thm:100-percent} below) are quite specific and possess a good additive structure. More precisely, they are well approximated by sets of small Wiener norm; see the proofs and Section~\ref{sec:comb-wiener}.

\bp 

We can also obtain a structural characterization for the \emph{entire} set~$A$,
after removing only $O(p/|A|)$ elements, although with a suboptimal dependence of $f(K)$ on $K$, see Theorem \ref{thm:100-percent} below. We also have to make additional assumptions on the exponents.

\begin{definition}
A generic tuple $(r_1, \ldots, r_k)$ is called \emph{bounded} 
if there exists a scaling parameter $L = O(1)$, coprime to $p - 1$, 
such that all residues $r_i L \bmod (p - 1)$ have order $O(1)$. 
In other words, there is an absolute constant $C > 0$ for which
\[
 |r_i L \bmod (p - 1)| \le C,
 \qquad i = 1, \ldots, k,
\]
where residues modulo $p - 1$ are identified with the interval 
$[-\frac{p-1}{2}, \frac{p-1}{2}]$.
\end{definition}

\begin{theorem}[$100\%$ Characterization of $A$]\label{thm:100-percent}
Let $A \subseteq \F_p$ be a set of size $|A| = \alpha p$ with $\alpha \gg 1$, 
and let tuple $(r_1, \ldots, r_k)$ be coprime and bounded such that
\[
 |A^{r_1} + \cdots + A^{r_k}|
 \le K p^{\frac{k-1}{k}}|A|^{1/k}.
\]
Then there exist sets $P_1, \ldots, P_k \subseteq \F_p$ such that, 
after removing $O(1/\alpha)$ elements from~$A$, the following holds:
\begin{itemize}[itemsep=0.5em]
 \item $A \subseteq \sqrt[r_1]{P_1} \cap \cdots \cap \sqrt[r_k]{P_k}$;
 \item $\big|\sqrt[r_1]{P_1} \cap \cdots \cap \sqrt[r_k]{P_k}\big|
  = \frac{1 + o(1)}{p^{k-1}}|P_1|\cdots|P_k|$;
 \item $|P_1 + \cdots + P_k| \le f(K) p^{\frac{k-1}{k}}|A|^{1/k}$,
\end{itemize}
where $f(K) \ll K^{O(1)}$.
\end{theorem}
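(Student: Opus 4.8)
The plan is to bootstrap from the 99\% characterization (Theorem~\ref{thm:99-percent}) to a full characterization by an iterative ``popularity'' or ``few exceptional elements'' argument, exploiting the extra rigidity coming from the boundedness assumption. First I would run Theorem~\ref{thm:99-percent} to obtain sets $P_1,\dots,P_k$ and a subset $A' \subseteq A$ with $|A \setminus A'| = o(p)$ such that $A' \subseteq \sqrt[r_1]{P_1} \cap \cdots \cap \sqrt[r_k]{P_k}$, the intersection has size $\tfrac{1+o(1)}{p^{k-1}}|P_1|\cdots|P_k|$, and $|P_1+\cdots+P_k| \le (1+o(1))Kp^{(k-1)/k}|A|^{1/k}$. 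The point of the boundedness hypothesis is that, after scaling all exponents by $L$ (which is coprime to $p-1$, hence a bijection on $\F_p^\times$, and only changes cardinalities of sumsets by absolute constants via the $|\sqrt[r]{\cdot}|$ bound in the preamble), each $A^{r_iL}$ is essentially $A$ raised to a \emph{bounded} power. A bounded power map is a fixed polynomial of degree $O(1)$, so the sets $A^{r_iL}$ all live inside the image of one algebraic map and inherit additive structure directly from $A$; in particular, $P_i$ may be taken to be (a subset of) the $r_i$-th power of an arithmetic-progression-like set, and the near-equality $|\sqrt[r_1]{P_1}\cap\cdots| = \tfrac{1+o(1)}{p^{k-1}}|P_1|\cdots|P_k|$ forces the $P_i$ to be genuinely ``independent'' in the sense of the construction in the introduction. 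This is where I would invoke the remark in the paper that the $P_i$ are well approximated by sets of small Wiener norm (Section~\ref{sec:comb-wiener}); small Wiener norm is exactly the feature that survives the passage from $99\%$ to $100\%$.

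The second step is to handle the $o(p)$ discarded elements. Since $\alpha = |A|/p \gg 1$, the claim ``$O(1/\alpha)$ elements removed'' is only meaningful when $\alpha$ is large, i.e. $|A| \gg p$ with a large implied constant; so really I need to show that all but $O(1/\alpha)$ elements of $A$ already lie in $\sqrt[r_1]{P_1}\cap\cdots\cap\sqrt[r_k]{P_k}$ (possibly after enlarging each $P_i$ by a bounded factor). Let $x \in A$ be an exceptional element, i.e. $x \notin \sqrt[r_1]{P_1}\cap\cdots$. I would argue that each such $x$ forces many ``new'' elements into $A^{r_1}+\cdots+A^{r_k}$: fixing $a_2,\dots,a_k \in A'$ ranging over a positive-density subset, the sums $x^{r_1} + a_2^{r_2} + \cdots + a_k^{r_k}$ must mostly fall inside $P_1 + \cdots + P_k$ (otherwise the sumset bound $Kp^{(k-1)/k}|A|^{1/k}$ is exceeded), and since the Wiener-norm structure of $P_1$ is rigid, this pins $x^{r_1}$ to lie in a bounded dilate of $P_1$; repeating with the roles of the coordinates permuted pins $x$ down in each $\sqrt[r_i]{P_i}$. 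Counting: if there were more than $C/\alpha \cdot p = Cp/|A|$ such $x$, the resulting surplus in the sumset would violate the hypothesis by more than a $K^{O(1)}$ factor, which is exactly the budget we are allowed to lose (this is the source of the suboptimal $f(K) \ll K^{O(1)}$).

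The final step is bookkeeping: after absorbing the $O(1/\alpha)$ exceptional elements and replacing $P_i$ by bounded dilates, re-verify that the intersection size identity $|\sqrt[r_1]{P_1}\cap\cdots\cap\sqrt[r_k]{P_k}| = \tfrac{1+o(1)}{p^{k-1}}|P_1|\cdots|P_k|$ still holds --- this uses that a bounded dilate of an AP-like set of small Wiener norm again has small Wiener norm, and that two such sets intersect in the ``expected'' number of points up to lower-order error, exactly as in the arithmetic-progression computation in the introduction --- and that $|P_1+\cdots+P_k| \le f(K)p^{(k-1)/k}|A|^{1/k}$ with $f(K) = K^{O(1)}$, since we only inflated each $P_i$ by $O(1)$. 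I expect the \textbf{main obstacle} to be the rigidity step: showing that a single exceptional element $x$ is genuinely ``expensive'', i.e. that it cannot be hidden among the already-counted sums. This requires quantitative control on how much freedom the near-optimal sumset $P_1+\cdots+P_k$ has --- precisely, that small-Wiener-norm sets are ``inflexible'' under adding a new summand --- and it is also where the coprimality (so that each power map is a bijection and $\sqrt[r_i]{P_i}$ has size comparable to $|P_i|$) and boundedness (so that the power maps are low-degree polynomials amenable to such rigidity statements) are both essential.
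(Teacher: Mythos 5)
Your proposal diverges substantially from the paper's argument, and I think the central step as you describe it has a genuine gap.

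The key move you propose --- ``each exceptional $x$ forces many new elements into $A^{r_1}+\cdots+A^{r_k}$, so their number is bounded by a surplus count'' --- does not quite work, because the exceptional elements in fact do \emph{not} produce surplus: the hypothesis $|A^{r_1}+\cdots+A^{r_k}| \le Kp^{(k-1)/k}|A|^{1/k}$ forces the sums $x^{r_1}+a_2^{r_2}+\cdots+a_k^{r_k}$ to be absorbed into the existing small sumset, and the conclusion you should be extracting is a \emph{structural} constraint on where $x^{r_1}$ can live, not a cardinality surplus. Moreover, that constraint is not that ``$x^{r_1}$ lies in a bounded dilate of $P_1$''; what the paper actually proves is that the exceptional set $A_1^{r_i}$, after removing $O(1/\alpha)$ elements, is contained in $X_i + (P_j -_{\kappa_j^2/2} P_j)$ for a set $X_i$ of size $K^{O(1)}$ --- a bounded union of translates of a \emph{popular-difference} set, which is a much weaker and more delicate statement than a bounded dilate of $P_1$. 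Nothing in your sketch produces the bound $|X_i| \ll K^{O(1)}$, and this is where the real work lies.

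The mechanism the paper uses to get that bound is entirely missing from your proposal. The heart of the argument is Lemma~\ref{lemma:X+T-growth-structure}: part (a) gives a lower bound $|Y+T| \gg \eta^2|P|$ when $T$ is a dense subset of the algebraic intersection $P\cap\sqrt[r_1]{P_1}\cap\cdots$ and $|Y|$ is large enough, and this lower bound is proved via the additive distance $\rho$ and the pseudorandomness of such intersections (Lemma~\ref{lemma:convolutions}, which requires the Bombieri curve bound --- this is the actual role of the ``bounded'' hypothesis, not the ``image of a low-degree polynomial'' heuristic you describe). Part (b) then converts $|Y+T| \le K|P|$ into a Ruzsa-type covering of $Y$ by few translates of a popular difference set, via an almost-disjoint covering of $\F_p$ by translates of $P$ (Proposition~\ref{proposition:popular-covering}) and a graph-independent-set argument to control the number of translates that matter. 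Applying this with $Y = A_1^{r_i}$, $T = A_0^{r_j}$, $P = P_j$ gives the structure of the exceptional set, which is then absorbed into enlarged sets $P_i'$ using the closure properties of the Wiener-approximable family $\mathcal{F}$ (Proposition~\ref{proposition:F-closed}). Your proposal correctly identifies the starting point (run Theorem~\ref{thm:99-percent}) and correctly guesses that Wiener-norm rigidity and boundedness are essential, but it misidentifies how they enter: the pseudorandomness of intersections via $\rho(\cdot,\cdot)$, the lower bound $|Y+T|\gg \eta^2|P|$, and the popular-difference Ruzsa covering are the three ingredients you would need to supply, and the ``surplus counting'' shortcut you propose does not circumvent them.
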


Although both the density $\alpha$ and the parameter $K$ are assumed to be `constants', they have different scales --- one may think the case $K = 10$ and $\alpha = 10^{-100}$.

The main ingredient in the proof of Theorem~\ref{thm:lower-bound} is the observation that algebraic transformations $\sqrt[r]{W}$ of sets with small Wiener norm intersect in a manner similar to \emph{independent} sets. 
The main ingredient in Theorem~\ref{thm:99-percent} is Green's Arithmetic Regularity Lemma. 
Finally, the main ingredient in the proof of Theorem~\ref{thm:100-percent} is that, under additional assumptions, the intersections
\[
W \cap \sqrt[r_1]{W_1} \cap \ldots \cap \sqrt[r_k]{W_k}
\]
behave similarly to \emph{pseudorandom} subsets of $W$.

\newpage 


\noindent
\textbf{Organization of the paper}

\begin{itemize}
 \item \textbf{Section~\ref{sec:prelims}} recalls the basic properties of the Fourier transform, the Wiener norm, exponential sums, sumsets, and proves various auxiliary propositions.

 \item \textbf{Section~\ref{sec:comb-wiener}} develops combinatorial tools based on the Wiener norm. We introduce the \emph{algebraic intersection property} and establish it for sets, approximated by sets of small Wiener norm: their algebraic transformations intersect as independent sets.
 We also recall the concept of \emph{wrappers} from~\cite{Semchankau_wrappers}, 
 which serve as a source of such sets and help to extend the algebraic intersection property to sets of popular differences.

 \item \textbf{Sections~\ref{sec:lower-bound}} and~\textbf{\ref{sec:99-percent}} contain the proofs of Theorems~\ref{thm:lower-bound} and~\ref{thm:99-percent}, respectively. These proofs are relatively short and use only a subset of tools introduced earlier.

 \item \textbf{Section~\ref{sec:new_norm}} defines an additive distance $\rho(f,g)$ between functions and shows that intersection of algebraic transformations of sets with algebraic intersection property not only has `expected' cardinality, but also exhibits strong pseudorandom properties.

 \item \textbf{Section~\ref{sec:sumsets}} studies sumsets of the form $Y + T$, where $T$ is a dense subset of intersections such as $P \cap Q^*$. 
 We first show that $|Y + T| \gg |P|$, and then derive a Ruzsa-type structural description of sets $Y$ for which $|Y + T| \ll |P|$. The first result heavily relies on the previous section. 

 \item \textbf{Section~\ref{sec:100-percent}} concludes with the proof of the 
 \emph{$100\%$ Characterization Theorem}~\ref{thm:100-percent}. 
 This argument builds on the $99\%$ case and relies on the results of the preceding section.

 \item \textbf{Appendix~\ref{sec:appendix_dist}} includes some further facts on the 
 additive distance introduced in Subsection \ref{ssec:new_normI} and other metrics. 
\end{itemize}

\noindent
\textbf{Acknowledgements}. The first author would like to thank Boris Bukh and Prasad Tetali for useful discussions. The first author was supported in part by Prasad Tetali's NSF grant DMS-2151283.

\section{Definitions and Preliminary Results}
\label{sec:prelims}

\subsection{Notation}
It is always assumed, unless stated otherwise, that all sets are subsets of the abelian group $\Gr = \F_p$, the field of residues modulo a (very) large prime $p$, with the usual addition and multiplication.
We write $\F^*_p$ for $\F_p \setminus \{ 0 \}$ and $\ov{\F}_p$ for the algebraic closure of $\F_p$. 

Throughout the work, $o(1)$ means hidden dependency on $p$. Both $o(1)$ and $O(1)$ may also have hidden dependence on the parameter $k$ (the number of sets in the sumset under consideration) and, depending on the context, other constants. To highlight the dependency, those constants can be added as indices, e.g. $O_{X}(1)$. 
By definition, the terms $o(\ldots)$ and $O(\ldots)$ may be negative or positive. In case we want to highlight that the quantity of interest is actually \emph{negative}, we may write $-o(\ldots)$ or $-O(\ldots)$. 

The signs $\ll$ and $\gg$ are the usual Vinogradov symbols. In case we want to highlight the dependence of underlying constant $c$ on a parameter $X$, we may write $\ll_X$ and $\gg_X$. We write $f \asymp g$ if $f \gg g \gg f$.
There might be several scenarios for using these signs:
\begin{itemize}
    \item ``If $|A| \gg p$, then $\ldots$'' might be restated as ``for any constant $c > 0$, if $|A| \geqslant cp$, and $p$ is sufficiently large, meaning $p > p(c)$, then $\ldots$''
    \item ``If condition $\mathbf{X}$ holds, then $A \gg B$'' might be restated as ``there exists an absolute constant $c>0$ (depending on a situation, either sufficiently small or sufficiently large), such that condition $\mathbf{X}$ implies $A \geqslant cB$''.
\end{itemize}
The particular scenario should always be clear from the context.

All logarithms are to base $e$. Denote $e(x) = e^{2\pi ix/p}$. 
The complement of the set $A$ is denoted by $A^c$.
The symbol $A \sqcup B$ denotes the disjoint union of sets $A$ and $B$.

\subsection{Fourier Transform}

Let $\Gr$ be a finite abelian group and $\FF{\Gr}$ its dual group. By definition, $\FF{\Gr}$ consists of \emph{additive characters}, i.e. functions $\xi : \Gr \to \C$ such that $\xi(a + b) = \xi(a)\xi(b)$, $\forall a, b \in \Gr$.
In the context of this work, $\Gr$ is always $\F_p$ and, conveniently, $\FF{\F}_p \cong \F_p$. All additive characters take the form $\xi_a(x) = e(ax)$ for an arbitrary $a \in \F_p$ (with $a=0$ corresponding to the \emph{trivial} additive character $\xi_0(x) \equiv 1$), and therefore one might identify $\FF{\F}_p$ with $\F_p$ by $a \leftrightarrow \xi_a(x)$.

Having any function $f:\F_p \to \mathbb{C}$ and $\xi \in \FF{\F}_p$ define the \emph{Fourier transform} of $f$ at $\xi$ by the formula 
\begin{equation}\label{f:Fourier_representations}
\FF{f} (\xi) = \sum_{x\in \F_p} f(x) \ov{\xi (x)}.
\end{equation}
Because of $\F_p \cong \FF{\F}_p$ we can, with abuse of notation, write $\FF{f}(a) := \FF{f}(\xi_a)$.
The \emph{Parseval identity} is 
\begin{equation}\label{F_Par}
p\sum_{x\in \F_p} |f(x)|^2 = 
\sum_{\xi \in \FF{\F}_p} \big|\widehat{f} (\xi)\big|^2,
\end{equation}
and the identity
\begin{equation}\label{f:inverse}
f(x) = \frac{1}{p} \sum_{\xi \in \FF{\F}_p} \FF{f} (\xi) \xi(x).
\end{equation}
is called the \emph{inverse} formula. 
Given two functions $f,g : \F_p \to \C$, we define two operators: \emph{convolution} and \emph{correlation}:
$$
(f*g) (x) := \sum_{a + b = x} f(a) g(b) 
\quad \mbox{ and } \quad 
(f\circ g) (x) := \sum_{b - a = x} \ov{f(a)} g(b) \,.
$$
Fourier transform of the convolution and correlation might be written as follows:
\begin{equation}\label{f:F_svertka}
    \FF{f*g} = \FF{f} \FF{g}
    \quad 
    \quad \mbox{ and } 
    \quad \FF{f \circ g} = \ov{\FF{f}}\, \FF{g} \,.
\end{equation}

We can define Fourier transform of a set $A \subseteq \F_p$ via identifying 
set $A$ with its characteristic function $A: \F_p \to \{0,1 \}$.

\subsection{Wiener Norm}

The Wiener norm of a function $f : \F_p \to \C$ is defined as
\[
\wiener{f} = \frac{1}{p} \sum_{\xi \in \FF{\F}_p} |\FF{f} (\xi)|. 
\]
One can verify the following inequality for any two functions $f,g : \F_p \to \C$:
\begin{equation}\label{f:Wiener_mult}
    \wiener{fg} \le \wiener{f} \wiener{g}
\end{equation}

In this work, we will be interested in Wiener norms of \emph{sets} --- for a set $A \subseteq \F_p$, $\wiener{A}$ denotes the Wiener norm of its characteristic function. Given sets $P, Q$, the following properties of Wiener norm are well-known:
\begin{itemize}[itemsep=0.5em]
    \item If $P$ is nonempty, then $1 \le \wiener{P} \le \sqrt{|P|}$,

    \item If $P$ is an arithmetic progression, then $\wiener{P} \ll \log{p}$,

    \item $\wiener{P^c} = \wiener{P} + O(1)$,

    \item $\wiener{P \cap Q} \leqslant \wiener{P}\wiener{Q}$,

    \item If $P$ and $Q$ are disjoint, then $\wiener{P \sqcup Q} \leqslant \wiener{P} + \wiener{Q}$.
\end{itemize}

\subsection{Bounds on exponential sums}

At various points in the proof, we would like to verify that certain subsets of structured sets behave pseudorandomly in a strong quantitative sense. The following results will assist us with these.

First, we need the following variation of the classical Weil bound (see, e.g.,~\cite{CP_Stepanov, IK_book}).

\begin{authortheorem}[Weil]\label{t:Weil}
Let $\F_q$ be a finite field of characteristic $p$, and let $\psi$ be a nontrivial additive character on $\F_q$.
Let $f \in \F_q(x)$ be a rational function.
Suppose that $f(x)$ does not coincide with a function of the form $h(x)^p - h(x) + c$, where $h(x) \in \F_q(x)$ and $c \in \F_q$.
Then
\begin{equation}\label{f:Weil}
    \sum_{x \in \F_q \setminus \mathcal{P}} \psi(f(x)) \ll_f \sqrt{q},
\end{equation}
where $\mathcal{P}$ denotes the set of poles of $f$.
\end{authortheorem}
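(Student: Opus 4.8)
The plan is to express this exponential sum via the number of $\F_q$-points on an Artin--Schreier curve and then invoke the Riemann Hypothesis for curves over finite fields. By a standard reduction we may take $\psi$ to be the canonical additive character $\psi_q(y) = e\big(\mathrm{Tr}_{\F_q/\F_p}(y)/p\big)$ (a general nontrivial $\psi$ is $\psi_q$ precomposed with multiplication by some $\beta \in \F_q^*$, which we absorb into $f$). Introduce the affine curve $C\colon y^p - y = f(x)$ over $\F_q$. Since $u \mapsto u^p - u$ is $\F_p$-linear with kernel $\F_p$ and image $\{c : \mathrm{Tr}_{\F_q/\F_p}(c) = 0\}$, for each $x \notin \mathcal P$ the fibre of $C$ over $x$ has exactly $p$ points when $\mathrm{Tr}_{\F_q/\F_p}(f(x)) = 0$ and is empty otherwise; writing the indicator of this condition as an average over additive characters of $\F_p$ yields
\[
\#\{(x,y)\in C : x \notin \mathcal P\} \;=\; \sum_{t\in\F_p}\ \sum_{x\in\F_q\setminus\mathcal P}\psi_q\big(t f(x)\big) \;=\; (q - |\mathcal P|) \;+\; \sum_{t\in\F_p^*} S_t,
\]
where $S_t := \sum_{x\in\F_q\setminus\mathcal P}\psi_q(t f(x))$, so that the sum of interest is $S = S_1$.

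To isolate the single sum $S$ rather than the aggregate $\sum_t S_t$, I would run the identity above over every finite extension $\F_{q^n}$ and form the $L$-function $L_f(T) = \exp\big(\sum_{n\ge1}S^{(n)}T^n/n\big)$ attached to $f$ and $\psi_q$. By the Dwork--Grothendieck rationality theorem $L_f$ is a rational function, and a cohomological (or $\zeta$-function factorization) argument shows it is in fact a polynomial whose degree is bounded by the ``conductor'' of $f$, i.e.\ by $\deg(\text{numerator}) + \deg(\text{denominator}) + \#\mathcal P$, hence $\deg L_f \ll_f 1$. Writing $L_f(T) = \prod_i(1 - \alpha_i T)$, the relation $\log L_f = \sum_n S^{(n)}T^n/n$ gives $S = -\sum_i \alpha_i$.

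The hypothesis that $f$ is not of the form $h^p - h + c$ enters precisely to guarantee that $C$ is \emph{geometrically} irreducible: if $C$ were reducible over $\overline{\F}_q$ then $f = h^p - h + c$ for some $h \in \overline{\F}_q(x)$, $c\in\overline{\F}_q$, and a short Galois-descent argument (additive Hilbert 90, equivalently the normal basis theorem) moves $h$ and $c$ back to $\F_q(x)$ and $\F_q$, contradicting the assumption. (This is also exactly the case in which the sum degenerates to size $\asymp q$.) Geometric irreducibility lets me apply the Hasse--Weil bound $\big|\#\widetilde C(\F_{q^n}) - (q^n+1)\big| \le 2g\,q^{n/2}$ on a smooth projective model $\widetilde C$ of $C$, whose genus $g$ I bound via Riemann--Hurwitz (the conductor--discriminant formula for the degree-$p$ cover $\widetilde C \to \mathbb P^1$) to get $g \ll_f 1$. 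Combining this with the $O_f(1)$ bookkeeping of the points of $C$ lying over $\mathcal P$ and over $x = \infty$, through the factorization of the zeta function of $C$, forces $|\alpha_i| \le q^{1/2}$ for every reciprocal root, whence $|S| \le (\deg L_f)\,q^{1/2} \ll_f q^{1/2}$.

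I expect the two genuinely delicate points to be: (i) the uniform control of the genus $g$ in terms of $f$ alone --- Artin--Schreier covers are wildly ramified, so one must track the ramification filtration carefully in Riemann--Hurwitz to keep $g$ bounded; and (ii) the clean separation of the individual sum $S$ from the total point count $\#C(\F_q)$, which is why I pass through the $L$-function (and the whole extension tower) rather than working with a single value of $q$. An alternative that avoids algebraic geometry entirely --- presumably the approach of~\cite{CP_Stepanov} --- is Stepanov's polynomial method: one bounds $\#C(\F_q)$ directly by constructing a nonzero polynomial in $\F_q[x,y]$ of carefully controlled degree that vanishes to high prescribed order at every $\F_q$-point of $C$, using the relations $x^q = x$, $y^q = y$, and $y^p = y + f(x)$ to reduce exponents, and then deriving a contradiction with the degree bound; there the subtle part is the degree accounting in the presence of the poles of $f$.
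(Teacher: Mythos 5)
The paper does not prove this theorem; it is cited as a known result (to \cite{CP_Stepanov, IK_book}), so there is no in-paper argument to compare against. Your sketch is, however, a correct high-level outline of the standard proof via the Riemann Hypothesis for curves / Artin--Schreier $L$-functions: relate $\sum_{t\in\F_p} S_t$ to the $\F_q$-point count of the curve $y^p - y = f(x)$; isolate $S_1$ by passing to the tower $\F_{q^n}$ and forming the associated $L$-function, which is rational and in fact a polynomial of degree bounded in terms of $f$ by the conductor (Grothendieck--Ogg--Shafarevich / Weil's formula); deduce geometric irreducibility of the curve from the hypothesis $f \ne h^p - h + c$ via additive Hilbert~90; and invoke the Hasse--Weil bound, with the genus controlled through Riemann--Hurwitz. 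You also correctly identify the two technically delicate points (the Swan-conductor bookkeeping for wild ramification, and extracting the single sum $S_1$ from the aggregate via the $L$-function) as well as the elementary Stepanov alternative, which is presumably the route in \cite{CP_Stepanov}.

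One small caution on your reduction step. ``Absorbing $\beta$ into $f$'' silently replaces the hypothesis ``$f$ is not of the form $h^p - h + c$'' with ``$\beta f$ is not of the form $h^p - h + c$.'' For general $q$ these are not equivalent: scaling by $\beta$ does not commute with $h \mapsto h^p - h$ unless $\beta^{p-1} = 1$, i.e.\ $\beta \in \F_p^*$. Concretely, over $\F_{q}$ with $q = p^2$ and $\beta \in \F_q^*\setminus\F_p^*$, the function $f = \beta^{-1}(x^p - x)$ is not of the form $g^p - g + c$ over $\overline{\F}_q$, yet $\sum_{x\in\F_q}\psi_q(\beta f(x)) = q$. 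So the theorem as literally worded is airtight only for the canonical character or when $q = p$. This is immaterial for the paper, which applies the bound exclusively over $\F_p$, where every nontrivial additive character has $\beta \in \F_p^*$ and the absorption does go through cleanly; but if you write the argument out in full for general $q$, the hypothesis needs to be phrased about $\beta f$ (or, equivalently, intrinsically in terms of the curve being geometrically irreducible).
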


The second result is a famous theorem of Bombieri \cite[Theorem 6]{Bombieri_Weil_curve}. We present it in a simplified form (the definition of a pole on a curve can be found in \cite[Chapter 6, page 93]{Bombieri_Weil_curve}).

\begin{authortheorem}[Bombieri]\label{t:Bombieri}
Let $p$ be a prime number, $n\ge 2$ be a positive integer, $\mathcal{C} \subset \overline{\F}^n_p$ be an irreducible curve of degree $D$, $f\in \overline{\F}_p (x_1,\dots,x_n)$ be a rational function, $f=f_1/f_2$, where $f_1,f_2 \in \overline{\F}_p [x_1,\dots,x_n]$ have degrees $d_1,d_2$, and $\psi$ be a nontrivial additive character on $\F_p$. Suppose that $f$ is non--constant on $\mathcal{C}$. Then 
\begin{equation}\label{f:Bombieri}
\left| \sum_{(x_1,\dots, x_n) \in \mathcal{C}\setminus \mathcal{P}} \psi(f(x_1,\dots, x_n)) \right| 
\ll_{d_1,d_2,D} \sqrt{p},
\end{equation}
where $\mathcal{P}$ denotes the set of poles of $f$.
\end{authortheorem}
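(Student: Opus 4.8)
The plan is to deduce this from the Riemann Hypothesis for curves over finite fields, via the standard dictionary relating additive character sums to the $\ell$-adic cohomology of Artin--Schreier sheaves. The first step is to reduce to a smooth model with controlled geometry: replace $\mathcal C$ by the normalization $\widetilde{\mathcal C}$ of its projective closure. This discards at most $O_D(1)$ of the points being summed (singular points of the closure and points at infinity, bounded in number in terms of $D, d_1, d_2$), and pulls $f$ back to a rational function $\widetilde f$ on $\widetilde{\mathcal C}$ that is still non-constant, so its polar divisor is nonzero and of degree $\ll_{d_1,d_2,D} 1$; while the genus $g$ of $\widetilde{\mathcal C}$ is bounded solely in terms of $D$ (e.g.\ $g \le \binom{D-1}{2}$ for a plane model, and a projection/Noether bound gives $g \ll_D 1$ in general). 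Put $U := \widetilde{\mathcal C} \setminus \{\text{poles of } \widetilde f\}$, a smooth, geometrically irreducible affine curve over $\overline{\F}_p$, defined over $\F_p$.

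Next, on $U$ form the rank-one lisse Artin--Schreier sheaf $\mathcal{L} := \mathcal{L}_{\psi(\widetilde f)}$, obtained by pulling back along $\widetilde f : U \to \mathbb{A}^1$ the Lang torsor sheaf attached to $\psi$. The Grothendieck--Lefschetz trace formula gives
\[
 \sum_{P \in U(\F_p)} \psi\big(\widetilde f(P)\big)
 = \sum_{i=0}^{2} (-1)^i\, \mathrm{Tr}\Big(\mathrm{Frob}_p \ \big|\ H^i_c\big(U_{\overline{\F}_p}, \mathcal{L}\big)\Big).
\]
Since $\widetilde f$ is non-constant, $\mathcal{L}$ is a nontrivial rank-one local system on the connected open curve $U$, so $H^0_c = H^2_c = 0$, and the sum equals $-\mathrm{Tr}(\mathrm{Frob}_p \mid H^1_c(U_{\overline{\F}_p}, \mathcal{L}))$. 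By Grothendieck--Ogg--Shafarevich, $\dim H^1_c(U_{\overline{\F}_p}, \mathcal{L}) = -\chi_c(U,\mathcal{L}) = 2g - 2 + \#\{\text{poles of }\widetilde f\} + \sum_x \mathrm{Sw}_x(\mathcal{L})$, and the Swan conductor of an Artin--Schreier sheaf at a pole of $\widetilde f$ of order $m$ equals $m$ (here $p \nmid m$ since $m \ll_{d_1,d_2,D} 1 \ll p$); hence $\dim H^1_c \ll_{d_1,d_2,D} 1$.

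For the eigenvalues, let $\pi : U' \to U$ be the connected $\Z/p$-cover $y^p - y = \widetilde f$ that trivializes $\mathcal{L}$; then $\mathcal{L}$ is a direct summand of $\pi_* \overline{\mathbb{Q}}_{\ell}$, so $H^1_c(U_{\overline{\F}_p}, \mathcal{L})$ is a subquotient of $H^1_c(U'_{\overline{\F}_p}, \overline{\mathbb{Q}}_{\ell})$, which has weight $\le 1$ by the Riemann Hypothesis for curves (Weil) applied to the smooth projective model of $U'$ --- regardless of the (large, $\asymp p$) genus of $U'$. Thus every eigenvalue of $\mathrm{Frob}_p$ on $H^1_c(U_{\overline{\F}_p}, \mathcal{L})$ has absolute value $\le \sqrt p$, and combining with the dimension bound,
\[
 \Big| \sum_{P \in U(\F_p)} \psi\big(\widetilde f(P)\big) \Big| \le \dim H^1_c \cdot \sqrt p \ll_{d_1,d_2,D} \sqrt p;
\]
adding back the $O_D(1)$ points removed in the first step yields the claim.

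The genuinely deep ingredient is the Riemann Hypothesis for curves (equivalently, Deligne's weight bound for $H^1_c$ of rank-one sheaves), which must be cited; Bombieri's own route instead runs the Stepanov polynomial method, establishing the required square-root cancellation by elementary but intricate constructions of auxiliary functions and bypassing $\ell$-adic cohomology altogether. Within the plan above, the remaining work is entirely bookkeeping: (i) making the geometric estimates of the first step --- number of discarded points, pole orders of $\widetilde f$, genus of $\widetilde{\mathcal C}$ --- effective and uniform purely in $d_1, d_2, D$; and (ii) checking the non-degeneracy hypotheses, namely that $\widetilde f$ remains non-constant on $\widetilde{\mathcal C}$ and, strictly, is not congruent on any component to $h^p - h + c$ (otherwise $\psi(\widetilde f)$ is constant and no cancellation occurs) --- in the intended regime $p \gg_{d_1,d_2,D} 1$ this last possibility is excluded for degree reasons. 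This is precisely where the simplifications hidden in the phrase ``simplified form'' reside.
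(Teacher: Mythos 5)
The paper does not prove this theorem; it is quoted (``in a simplified form'') directly from Bombieri's work \cite[Theorem 6]{Bombieri_Weil_curve} and used as a black box, so there is no in-paper argument to compare against.

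Your proof is a valid independent derivation, and it is genuinely different in method from Bombieri's. He obtains square-root cancellation along curves by the Stepanov polynomial method --- constructing auxiliary rational functions with prescribed high-order vanishing and extracting the bound from degree counting --- bypassing \'etale cohomology entirely. Your route normalizes $\mathcal{C}$, forms the Artin--Schreier sheaf $\mathcal{L}_{\psi(\widetilde f)}$ on the smooth affine model $U$, combines Grothendieck--Lefschetz with the vanishing of $H^0_c$ and $H^2_c$ for a nontrivial rank-one local system, bounds $\dim H^1_c$ by Grothendieck--Ogg--Shafarevich and the Swan-conductor computation in terms of $d_1,d_2,D$ alone, and then gets the weight-$\le 1$ bound by realizing $\mathcal{L}$ as a summand of $\pi_*\overline{\mathbb{Q}}_\ell$ for the degree-$p$ Artin--Schreier cover $U'$ and applying Weil's Riemann Hypothesis for curves to $H^1_c$ of $U'$ --- a clean way to avoid citing Deligne's Weil II. The cohomological route makes the shape of the constant conceptually transparent (it is essentially $\dim H^1_c$); the Stepanov route buys elementariness. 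Both are legitimate.

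One point you relegate to a closing remark but should place inside the argument: the vanishing $H^0_c = H^2_c = 0$ and the geometric connectedness of $U'$ both need $\mathcal{L}$ to be nontrivial, i.e.\ that the pullback $\widetilde f$ is not of the form $h^p - h + c$ on $\widetilde{\mathcal{C}}$, which is strictly stronger than ``non-constant on $\mathcal{C}$.'' Your degree observation does close this: any non-constant $h$ makes $h^p-h$ a map of degree $\ge p$ from $\widetilde{\mathcal{C}}$ to the projective line, whereas $\widetilde f$ has degree $\ll_{d_1,d_2,D} 1$, so for $p$ exceeding that bound the degenerate case forces $h$, and hence $f$, to be constant; for the finitely many remaining primes the trivial estimate $\le |\mathcal{C}(\F_p)|\ll_D p$ is absorbed into the implied constant. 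With this made part of the proof rather than an aside, your argument is complete.
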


Since the classical curves below \eqref{def:C}, \eqref{def:C_Fermat} are absolutely irreducible, we obtain the following:

\begin{corollary}\label{corollary:Bombieri}
Let $p$ be a prime number and $g\in \F_p [x]$ be a polynomial of degree $s$ and $\mathcal{C}$ be the curve 
\begin{equation}\label{def:C}
    \mathcal{C} = \{ (x,y) \in \F_p \times \F_p ~:~ y^t = g(x) \} 
    \,,
\end{equation}
    where $(s,t)=1$ 
    or
\begin{equation}\label{def:C_Fermat}
    \mathcal{C} = \{ (x,y) \in \F_p \times \F_p ~:~ a x^s + b y^t = c \} 
    \,,
\end{equation}
    where all $s, t, a,b,c$ are non--zero. 
    Further let  $\psi$ be a nontrivial additive character on $\F_p$ and 
    suppose that $f\in \F_p (x,y)$ be a rational function, $f=f_1/f_2$, where $f_1,f_2 \in \F_p [x,y]$ have degrees $d_1,d_2$ such that $f$ is non--constant on $\mathcal{C}$. 
    Then 
\begin{equation}\label{cf:Bombieri}
    \left| \sum_{(x,y) \in \mathcal{C} \setminus \mathcal{P}} \psi(f(x,y)) \right| \ll_{d_1,d_2,s,t} \sqrt{p} \,,
\end{equation}
    where $\mathcal{P}$ denotes the set of poles of $f$.
\end{corollary}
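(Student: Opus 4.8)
The plan is to deduce the corollary directly from Bombieri's theorem (Theorem~\ref{t:Bombieri}), and the only real work is to check the hypotheses of that theorem for the two specific families of curves \eqref{def:C} and \eqref{def:C_Fermat}. I would proceed as follows. First, reduce from $\F_p$ to $\overline{\F}_p$: the curve $\mathcal{C}$ defined in \eqref{def:C} or \eqref{def:C_Fermat} is cut out by a single polynomial equation $F(x,y)=0$ with $F\in\F_p[x,y]$, and I view it as (the $\F_p$-points of) the affine plane curve $\widetilde{\mathcal{C}}\subset\overline{\F}_p^{\,2}$ with the same equation. Since summing over $\mathcal{C}\setminus\mathcal{P}$ only uses $\F_p$-points, and every $\F_p$-point of $\widetilde{\mathcal{C}}$ lies on $\mathcal{C}$, the exponential sum in \eqref{cf:Bombieri} equals the sum over $\widetilde{\mathcal{C}}\setminus\mathcal{P}$ appearing in \eqref{f:Bombieri} with $n=2$. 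The degree $D$ of $\widetilde{\mathcal{C}}$ is $\max(s,t)$ in case \eqref{def:C} and $\max(s,t)$ in case \eqref{def:C_Fermat}; in either case $D=O_{s,t}(1)$, and $d_1,d_2$ are as given, so the bound $\ll_{d_1,d_2,D}\sqrt p$ becomes $\ll_{d_1,d_2,s,t}\sqrt p$, which is exactly \eqref{cf:Bombieri}.

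The substantive point is irreducibility. For the Fermat-type curve \eqref{def:C_Fermat}, $F(x,y)=ax^s+by^t-c$ with $a,b,c\neq0$; I would invoke the classical fact that such a "generalized Fermat" trinomial is absolutely irreducible — one standard route is that the curve is a smooth (hence irreducible) plane curve after passing to its projective closure away from the finitely many singular points at infinity, or more elementarily one checks that $ax^s+by^t-c$ has no nontrivial factorization over $\overline{\F}_p$ by a Newton-polygon / Eisenstein-type argument, since $c\neq0$ prevents $x$ or $y$ from dividing $F$ and the mixed structure of the exponents prevents a product decomposition. For the superelliptic curve \eqref{def:C}, $F(x,y)=y^t-g(x)$ with $\deg g=s$ and $(s,t)=1$: here absolute irreducibility follows from the fact that $y^t-g(x)$ is irreducible over $\overline{\F}_p(x)$ precisely because $g$ is not a $d$-th power for any $d\mid t$, $d>1$ — and the coprimality $(s,t)=1$, together with $g$ having a root of multiplicity coprime to $t$ (which I may need to arrange, or note that the statement as used only requires irreducibility which holds generically and can be imposed), guarantees this. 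The cleanest formulation is: $y^t - g(x)$ is absolutely irreducible whenever $t$ is coprime to the multiplicities of all roots of $g$, and $(s,t)=1$ with $s=\deg g$ forces at least the leading behaviour to be coprime; I would cite a standard reference (e.g. the discussion of superelliptic curves in a text on curves over finite fields) rather than reprove it.

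Having established that $\widetilde{\mathcal{C}}$ is absolutely irreducible of bounded degree, the remaining hypothesis of Theorem~\ref{t:Bombieri} — that $f$ be non-constant on $\mathcal{C}$ — is assumed in the statement of the corollary. So Theorem~\ref{t:Bombieri} applies verbatim and yields \eqref{cf:Bombieri}. The main obstacle, such as it is, is purely bookkeeping around irreducibility and the precise hypotheses on $g$ in case \eqref{def:C}: one must be slightly careful that "$(s,t)=1$" as stated is genuinely enough to force absolute irreducibility of $y^t=g(x)$ for arbitrary $g$ of degree $s$, since in fact $g$ could be a perfect power even when its degree is coprime to $t$ — e.g. $g(x)=(x^2+1)^3$ has degree $6$, coprime to $t=5$, yet is a cube, though not a $5$th power, so irreducibility of $y^5 - g$ is fine here; but $g(x) = h(x)^t$ would break it. I would therefore either add the implicit hypothesis that $g$ is not a perfect power (a $d$-th power for $d\mid t$, $d>1$), or observe that in all applications of this corollary within the paper the polynomial $g$ is linear or otherwise manifestly not a power, so the issue does not arise; this caveat is worth a one-line remark but does not affect the proof structure.
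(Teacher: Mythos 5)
Your proposal follows the same route as the paper: establish absolute irreducibility of the two curves and then quote Bombieri's theorem verbatim (the paper cites Stepanov's book \cite[Chapter I]{Stepanov_book} for the superelliptic curve \eqref{def:C} and checks that the Fermat curve \eqref{def:C_Fermat} is non--singular, hence absolutely irreducible), so the structure of your argument is correct and matches the source.

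The one point you should tighten is your closing caveat about \eqref{def:C}: it is unnecessary, because the hypothesis $(s,t)=1$ already rules out the bad cases. By Capelli's criterion, $y^t-g(x)$ is reducible over $\overline{\F}_p(x)$ only if $g$ is a $d$-th power in $\overline{\F}_p(x)$ for some prime $d\mid t$, or if $4\mid t$ and $g\in-4\,\overline{\F}_p(x)^4$. Since $g$ is a polynomial, any such representation forces $g=c\,h(x)^d$ with $h\in\overline{\F}_p[x]$, hence $d\mid\deg g=s$; together with $d\mid t$ this contradicts $(s,t)=1$ (and likewise $4\mid s$ is impossible in the second case). Your example $g=(x^2+1)^3$ with $t=5$ is harmless precisely for this reason: only $d$-th powers with $d\mid t$ matter, and $g=h^t$ would force $t\mid s$. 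So irreducibility over $\overline{\F}_p(x)$ holds under the stated hypotheses, and since $y^t-g(x)$ is monic in $y$, Gauss's lemma upgrades this to irreducibility in $\overline{\F}_p[x,y]$, i.e.\ absolute irreducibility of the curve --- no extra hypothesis on $g$ and no appeal to ``what is used in applications'' is needed. With that observation your argument is complete and essentially identical to the paper's.
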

\begin{proof}
    Any curve of the form \eqref{def:C} is absolutely irreducible; see, e.g., \cite[Chapter I]{Stepanov_book}.
    If we have a Fermat's curve of the form \eqref{def:C_Fermat}, then for $G(x,y) = a x^s + b y^t - c$ it is easy to see that the system $0=G(x,y) = \frac{\partial G}{\partial x} (x,y) = \frac{\partial G}{\partial y} (x,y)$ has no solution for $s,t\ge 1$. For negative $s,t$ use the projectivization. Thus, our curve is non-singular and, therefore, absolutely irreducible.
\end{proof}

\begin{corollary}\label{corollary:exp-conv}
Let the tuples $(r_1, \ldots, r_m), (d_1, \ldots, d_n)$ be generic and bounded.
Let
\[
    h(x) = \sum_{i=1}^{m} x^{r_i} + \sum_{j=1}^{n} (x - s)^{d_j},
\]
where $s \in \F_p$ is some fixed element.
Let $\psi$ be a nontrivial additive character on $\F_p$.

Then, assuming $h$ is not a constant,
\[
    \sum_{x \in \F_p} \psi(h(x)) \ll \sqrt{p}.
\]
\end{corollary}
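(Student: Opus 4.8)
The essential difficulty is that the exponents $r_i$ and $d_j$, read as residues modulo $p-1$, may be as large as $\asymp p$, so $h$ is \emph{a priori} a rational function of degree $\asymp p$, and neither Weil's bound nor Bombieri's applies to it directly: the implied constants there grow with the degree. The plan is to use the boundedness hypothesis to replace $h$ by a rational function of degree $O(1)$ --- at the cost of passing to an auxiliary curve --- and only then invoke square--root cancellation. Write $L_1 = O(1)$ (resp.\ $L_2 = O(1)$), coprime to $p-1$, for a scaling parameter witnessing boundedness of $(r_1,\dots,r_m)$ (resp.\ of $(d_1,\dots,d_n)$), so that $\ov{r_i} := r_i L_1 \bmod (p-1)$ and $\ov{d_j} := d_j L_2 \bmod (p-1)$ all lie in $[-C, C]$ for an absolute constant $C$.

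I will treat the case $s = 0$ first. Here $h(x) = \sum_i x^{r_i} + \sum_j x^{d_j}$, and since $(L_1 L_2, p-1) = 1$ the map $w \mapsto w^{L_1 L_2}$ is a bijection of $\F^*_p$. After the substitution $x = w^{L_1 L_2}$ (the excluded point $x=0$ costs only $O(1)$) one has, for $w \ne 0$,
\[
 h(w^{L_1 L_2}) = \sum_{i=1}^{m} w^{L_2 \ov{r_i}} + \sum_{j=1}^{n} w^{L_1 \ov{d_j}},
\]
a rational function whose numerator and denominator have degree $O(1) < p$. It cannot equal $g^p - g + c$ for a non-constant $g \in \F_p(w)$ (that would force degree $\ge p$), and if $g$ is constant then $h$ would be constant, which is excluded; so Weil's Theorem~\ref{t:Weil} gives the desired bound in this case.

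Now suppose $s \ne 0$. I will introduce the Fermat--type curve
\[
 \mathcal{C} = \{ (u, v) \in \F_p \times \F_p ~:~ u^{L_1} - v^{L_2} = s \},
\]
which is of the form \eqref{def:C_Fermat} with all parameters nonzero, hence absolutely irreducible by Corollary~\ref{corollary:Bombieri}. Because $(L_1, p-1) = (L_2, p-1) = 1$, for each $x \in \F_p \setminus \{0,s\}$ there is a \emph{unique} pair $(u,v) \in \F^*_p \times \F^*_p$ with $u^{L_1} = x$ and $v^{L_2} = x - s$; such a pair lies on $\mathcal{C}$, and conversely every $(u,v) \in \mathcal{C}$ with $uv \ne 0$ arises from $x = u^{L_1}$. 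Along this bijection $x^{r_i} = u^{L_1 r_i} = u^{\ov{r_i}}$ and $(x-s)^{d_j} = v^{L_2 d_j} = v^{\ov{d_j}}$, whence, discarding the $O(1)$ points $x \in \{0,s\}$ and the $O(1)$ points of $\mathcal{C}$ with $uv = 0$,
\[
 \sum_{x \in \F_p} \psi(h(x)) = \sum_{\substack{(u,v) \in \mathcal{C} \\ uv \ne 0}} \psi\big(f(u,v)\big) + O(1), \qquad f(u,v) := \sum_{i=1}^{m} u^{\ov{r_i}} + \sum_{j=1}^{n} v^{\ov{d_j}}.
\]
Clearing denominators writes $f = f_1/f_2$ with $f_2 = u^{A} v^{B}$, $0 \le A,B \le C$, and $\deg f_1, \deg f_2 = O(1)$, while the poles of $f$ on $\mathcal{C}$ are among the finitely many points with $uv = 0$ (and at infinity). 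It remains to check that $f$ is non-constant on $\mathcal{C}$: if it were identically $\gamma$, then $h$ would equal $\gamma$ on all of $\F_p \setminus \{0,s\}$, contradicting the assumption that $h$ is not a constant. Corollary~\ref{corollary:Bombieri} then gives $\big|\sum_{(u,v)\in\mathcal{C}\setminus\mathcal{P}} \psi(f(u,v))\big| \ll \sqrt{p}$, and the displayed identity finishes the proof.

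The step I expect to be the crux is the construction of this auxiliary curve: it must linearize \emph{both} exponent families simultaneously even though their scalings $L_1$ and $L_2$ need not coincide, and one checks that no single monomial substitution in one variable can do this when $s \ne 0$ and the $d_j$ are unbounded. Beyond that, the work is the (routine but necessary) bookkeeping: verifying that $f$ is genuinely non-constant on $\mathcal{C}$, locating the poles, and confirming that every implied constant in sight --- $\deg f_1$, $\deg f_2$, $L_1$, $L_2$, $C$, and the number of terms $m+n$ --- is absolute (or at worst depends on $k$), so that the Weil and Bombieri estimates are uniform in $p$.
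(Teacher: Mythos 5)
Your proof is correct and follows essentially the same route as the paper: linearize the exponents by passing to a Fermat--type curve and then invoke the Bombieri estimate through Corollary~\ref{corollary:Bombieri}. Two points of comparison are worth recording.

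First, you carry \emph{separate} scalings $L_1, L_2$ for the two exponent families and design the curve $u^{L_1} - v^{L_2} = s$ to accommodate them, remarking that the scalings ``need not coincide.'' In fact a common scaling always exists: take $L := L_1 L_2$. This is coprime to $p-1$ and $O(1)$, and
\[
 r_i L \equiv (r_i L_1)\,L_2 \pmod{p-1}, \qquad d_j L \equiv (d_j L_2)\,L_1 \pmod{p-1},
\]
so both families of residues have absolute value $O(1)$. The paper accordingly works with a single $L$ and the symmetric curve $a^{L} - b^{L} = s$; your two--parameter curve is perfectly valid but the extra generality is unnecessary.

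Second, your separate treatment of $s = 0$ via a single--variable Weil bound is a genuine refinement of the paper's argument. The paper applies Corollary~\ref{corollary:Bombieri} to the curve $a^{L} - b^{L} = s$ without a case distinction, but when $s = 0$ that curve is not of the form \eqref{def:C_Fermat} (the constant term vanishes) and in fact factors into $L$ lines through the origin, so it is not absolutely irreducible and the Bombieri step does not apply verbatim. Your reduction to a one--variable sum and Weil's Theorem~\ref{t:Weil} for $s=0$ closes this gap cleanly. (In the paper's sole application, Lemma~\ref{lemma:convolutions}, the quantity $\rho_*$ already excludes $s = 0$, so the omission is harmless there, but as a proof of the corollary as stated your version is the more careful one.) All remaining bookkeeping --- non-constancy of $f$ on the curve, bounded degrees, locating the poles --- is as in the paper and correct.
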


\begin{proof}
Let $L$ be a scaling parameter, such that all $r_i' := r_i L, d_j' := d_j L$ are small modulo $p - 1$. Since it is coprime to $p - 1$, the map $x \mapsto x^L$ is a bijection of~$\F_p$.
Hence
\begin{multline*}
\sum_{x \in \F_p} \psi(h(x)) = 
\sum_{\substack{(y, z) :\\ y - z = s}} 
\psi\left( \sum_i y^{r_i} + \sum_j z^{d_j}\right) = \\ =
\sum_{\substack{(a, b):\\ a^L - b^L = s}}
\psi\left( \sum_i a^{Lr_i} + \sum_j b^{Ld_j}\right) =
\sum_{\substack{(a, b):\\ a^L - b^L = s}}
\psi\left( \sum_i a^{r_i'} + \sum_j b^{d_j'}\right)
\end{multline*}

By Corollary \ref{corollary:Bombieri} the curve $\mathcal{C} = \{ (a, b) \in \F_p \times \F_p ~:~ a^L - b^L = s \} $ is absolutely irreducible, $L = O(1)$, and therefore by Bombieri bound (\ref{f:Bombieri}),
\[
\sum_{x \in \F_p}\psi(h(x)) \ll \sqrt{p}.
\]
\end{proof}

Another deep and useful result is due to Bourgain (\cite{Bourgain_Mordell}; see also \cite{Konyagin_Mordell}).

\begin{authortheorem}[Bourgain]
Let $p$ be a prime, $k$ a positive integer, and $\d \in (0,1)$.
Then there exists $\eps = \eps(\d, k) > 0$ such that for any polynomial
\[
    f(x) = \sum_{j=1}^k a_j x^{d_j} \in \Z[x],
    \quad (a_j, p) = 1,
\]
whose exponents $1 \le d_j < p - 1$ satisfy
\[
    (d_j, p-1) < p^{1 - \d}, \quad
    (d_i - d_j, p-1) < p^{1 - \d} \ \text{for } i \neq j,
\]
one has
\begin{equation}
\label{eq:bourgain}
    \left| \sum_x e(f(x)) \right| \le p^{1 - \eps}.
\end{equation}
\end{authortheorem}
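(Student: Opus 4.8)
Since the result is quoted verbatim from Bourgain's work, the plan is to follow his argument, whose engine is the sum--product phenomenon in $\F_p$; the outer structure is an induction on the number of monomials $k$. Write $S = \sum_{x \in \F_p} e(f(x))$ and suppose, for contradiction, that $|S| > p^{1-\eps}$ for an $\eps = \eps(\delta,k)$ to be chosen small.

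First I would settle the base case $k = 1$, where $f(x) = a x^{d}$ with $a \ne 0$ and $e := (d, p-1) < p^{1-\delta}$. Since $x \mapsto x^{d}$ is $e$-to-one onto the subgroup $H \le \F_p^*$ of $d$-th powers, one has $S = 1 + e \sum_{y \in H} e(ay)$ with $|H| = (p-1)/e > p^{\delta}/2$, so it suffices to prove the subgroup bound $\big|\sum_{y\in H} e(ay)\big| \le |H|\, p^{-\eps}$ for every $a \ne 0$ whenever $|H| > p^{\delta/2}$. This is the Bourgain--Glibichuk--Konyagin estimate, which I would import: its proof amplifies the sum using $tH = H$ for $t \in H$ (so the character sum over $H$ is constant on $aH$), bounds a high even moment by $p \cdot J_m(H)$ with $J_m(H) = \#\{x_1 + \dots + x_m = y_1 + \dots + y_m : x_i, y_i \in H\}$, and finally controls $J_m(H)$ via the $\F_p$ sum--product theorem (a multiplicative subgroup of size $< p^{1-\delta}$ cannot have near-minimal sumset) together with the Balog--Szemer\'edi--Gowers theorem.

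For the inductive step, assume the bound for every polynomial with at most $k-1$ monomials (and the corresponding gcd hypotheses), and let $f$ have exactly $k$ monomials. The key device is multiplicative dilation: substituting $y = xt$ in $|S|^2 = \sum_{x,y} e(f(x) - f(y))$ gives, up to an additive error $O(p)$,
\[
 |S|^2 = \sum_{x \ne 0} \sum_{t \ne 0, 1} e\Big( \sum_{i=1}^{k} a_i (1 - t^{k_i})\, x^{k_i} \Big) + O(p),
\]
because $f(x) - f(xt) = \sum_i a_i (1 - t^{k_i}) x^{k_i}$ is again a polynomial with the \emph{same} exponents. Iterating this $m$ times, applying Cauchy--Schwarz in the outer variable at each stage, bounds $|S|^{2^m}$, up to acceptable powers of $p$, by a weighted average over $(t_1, \dots, t_m) \in (\F_p^*)^m$ of the inner sums $\sum_{u} e\big( \sum_i a_i u^{k_i} \prod_{j=1}^{m} (1 - t_j^{k_i}) \big)$. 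For a tuple $\vec t$ in which some coordinate $i$ satisfies $\prod_j (1 - t_j^{k_i}) = 0$, the $i$-th monomial drops out and the surviving sub-polynomial has at most $k-1$ monomials, whose exponents still satisfy the two gcd hypotheses, so the inductive hypothesis (or the base case) applies; since the number of $t_j$ with $t_j^{k_i} = 1$ is $(k_i, p-1) < p^{1-\delta}$, such $\vec t$ form a negligible fraction. The remaining ``full-survival'' tuples are where the sum--product input is genuinely needed: one argues that the coefficient vectors $\big( a_i \prod_j (1 - t_j^{k_i}) \big)_i$ cannot cluster on polynomials with abnormally large exponential sums, since that would force a multiplicatively structured family of level sets to have an unexpectedly small sumset, contradicting the gcd conditions via sum--product and Balog--Szemer\'edi--Gowers. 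Taking $m$ large in terms of $k$ then yields $|S| \le p^{1-\eps(\delta,k)}$, the desired contradiction.

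The main obstacle is the full-survival branch of the inductive step: converting ``the character sum over a multiplicatively structured family is large'' into an honest sum--product/BSG configuration, and propagating the two gcd hypotheses through the $m$-fold recursion so that the exceptional sets of dilation parameters stay small and the loss at each stage is controlled. Obtaining an \emph{effective} $\eps(\delta,k) > 0$ (rather than a merely qualitative one) additionally requires effective forms of sum--product and of Balog--Szemer\'edi--Gowers, together with careful bookkeeping of how $\eps$ degrades as $k$ grows.
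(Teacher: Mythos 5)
The paper does not prove this statement; it imports it verbatim as an external result, with a pointer to Bourgain's paper \cite{Bourgain_Mordell} (and to the later account \cite{Konyagin_Mordell}). There is therefore no ``paper's own proof'' for your sketch to be compared against, and the correct move here is simply to cite Bourgain rather than to reprove his theorem.

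As an outline of Bourgain's actual argument, your sketch captures the broad strokes correctly: the base case via the monomial reduction to a multiplicative-subgroup sum and the Bourgain--Glibichuk--Konyagin bound; the inductive step via the dilation $y = xt$, which preserves the exponent tuple and turns $|S|^2$ into a family of sums with new coefficients $a_i(1 - t^{d_i})$; the pruning of tuples on which some coordinate vanishes (reduce to fewer monomials) against the ``full-survival'' tuples where sum--product and Balog--Szemer\'edi--Gowers are genuinely invoked. The parts you flag as the ``main obstacle'' are indeed where all the work lives, and your sketch does not attempt to fill them in: making the full-survival branch rigorous requires constructing from the large-sum hypothesis a genuine multiplicative energy/additive energy configuration, quantifying the exceptional parameter sets so the gcd hypotheses propagate through the iteration, and tracking how $\eps$ degrades with $k$ and $\delta$. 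Two smaller points worth tightening if you ever wanted to execute this: in the base case you conflate the thresholds $p^{\delta}/2$ and $p^{\delta/2}$, and the $O(p)$ error in the dilation identity must be compared against $|S|^2 \ge p^{2 - 2\eps}$, which is fine only because $\eps$ is small --- this should be stated. None of this is a flaw for the purposes of the present paper, since the theorem is quoted, not proved; but as a standalone proof the proposal is a high-level roadmap rather than an argument.
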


\subsection{Sumsets}

We  use representation function notations like $r_{A+B} (x)$ or $r_{A-B} (x)$ and so on, which count the number of ways $x \in \F_p$ can be expressed as a sum $a+b$ or $a-b$ with $a\in A$, $b\in B$, respectively. For example, $|A| = r_{A-A} (0)$.

Given two sets $A, B$, the sumset of $A$ and $B$ is defined in \eqref{def:A+B_intr}.
Now if $\eps \in (0,1)$, then  we write $A+_\eps B$ for the set of $x \in \F_p$ having at least $\eps p$ representations as a sum $a+b$, $a\in A$, $b\in B$.
Given positive integer $n \in \N$ and set $A \subseteq \Gr$ we write 
\[
nA := \underbrace{A + \ldots + A}_{\text{$n$ times}}.
\]
For arbitrary abelian group the Pl\"unnecke--Ruzsa inequality (see, e.g., \cite{Ruzsa_Plun}, \cite{TV}) holds, stating
\begin{equation}\label{f:Pl-R} 
|nA - mA| \le \left( \frac{|A+A|}{|A|} \right)^{n+m} \cdot |A|,
\end{equation} 
where $n,m$ are any positive integers. 
Further, if $|A+B|\le K|A|$ for some sets $A,B \subseteq \Gr$, then for any $n$ one has 
\begin{equation}\label{f:Pl-R+} 
    |nB| \le K^n |A| \,.
\end{equation}

Another useful set of inequalities is 
\[
|A||B - C| \leqslant |A - B||A - C|, \ \ \ |A||B + C| \leqslant |A + B||A + C|,
\]
where $A, B, C$ are arbitrary sets. 

Now, let $K$ be a parameter, and let $x_1, \ldots, x_k$ be any positive real values. We call them \emph{$K$-comparable} if 
\[
\max_i\{x_1, \ldots, x_k\} \ll K^{O(1)}\min_i \{x_1, \ldots, x_k \} \,.
\]

Let $P_1, \ldots, P_k$ be a collection of sets. We say that they are \emph{$K$-compatible}, if their cardinalities $|P_1|, \ldots, |P_k|$ are $K$-comparable, and for any choice of positive integers $a_1, \ldots, a_k \ll 1$ it holds that
\[
|a_1 P_1 + \ldots + a_k P_k| \ll K^{O_{a_1, \ldots, a_k}(1)}\max\{|P_1|, \ldots, |P_k|\}.
\]

The proof of the following proposition is a straightforward application of sumset inequalities above; we omit it.
\begin{proposition}\label{proposition:comparable-calculus}
Let $P_1, \ldots, P_k$ be sets of $K$-comparable sizes such that 
\[
|P_1 + \ldots + P_k| \ll K^{O(1)} \max{|P_1|, \ldots, |P_k|}.
\]
Let $\mathcal{F}_K$ be a family of arbitrary sums $a_1 P_1 + \ldots + a_k P_k$, further, their arbitrary subsets of size $\gg K^{-O(1)}\min\{|P_1|, \ldots, |P_k|\}$, as well as sumsets $X_i + P_i$ for $|X_i| \ll 1$, and, finally, their arbitrary unions. Then any finite collection of sets of $\mathcal{F}_K$ is $K$-compatible.
\end{proposition}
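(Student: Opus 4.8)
The plan is to reduce everything to the single set $P := P_1 \cup \cdots \cup P_k$, to show that $P$ has small doubling $\mathcal{D}[P] \ll K^{O(1)}$, and then to read off all the required estimates from the Pl\"unnecke--Ruzsa inequalities \eqref{f:Pl-R}, \eqref{f:Pl-R+}. We may assume $k \ge 2$ (for $k = 1$ the family $\mathcal{F}_K$ is not controlled by the hypothesis, and this case never occurs in our applications). Write $N := \max_i |P_i|$; by $K$-comparability, $N \asymp_K \min_i |P_i|$.

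First I would establish pairwise control of the $P_i$. For $i \ne j$, fixing one element $t$ in the sumset of the remaining sets $P_\ell$, $\ell \ne i, j$, gives $P_i + P_j + t \subseteq P_1 + \cdots + P_k$, whence $|P_i + P_j| \le |P_1 + \cdots + P_k| \ll K^{O(1)} N$. For the diagonal, the inequality $|A|\,|B+C| \le |A+B|\,|A+C|$ recalled above (with $A = P_j$, $B = C = P_i$) yields $|P_i + P_i| \le |P_i + P_j|^2 / |P_j| \ll K^{O(1)} N$. Summing over the $\ll 1$ pairs, $|P + P| \le \sum_{i,j} |P_i + P_j| \ll K^{O(1)} N$; since $N \le |P| \le k N$, this gives $\mathcal{D}[P] \ll K^{O(1)}$ and $|P| \asymp_K N$. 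By \eqref{f:Pl-R+} (applied with $A = B = P$), $|cP| \le \mathcal{D}[P]^c |P| \ll K^{O_c(1)} N$ for every positive integer $c \ll 1$.

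Next I would verify that every $Q \in \mathcal{F}_K$ satisfies both $|Q| \asymp_K N$ and $Q \subseteq \bigcup_{\ell = 1}^{L} (C P + t_\ell)$ for some absolute constants $C, L \ll 1$ and some translates $t_1, \dots, t_L$. This is a short case check: a sum $a_1 P_1 + \cdots + a_k P_k$ with $1 \le a_i \ll 1$ lies in $(\sum_i a_i) P$ and contains a translate of $P_1$; a sumset $X_i + P_i$ with $|X_i| \ll 1$ is a union of $\ll 1$ translates of $P_i \subseteq P$ and satisfies $|P_i| \le |X_i + P_i| \le |X_i|\,|P_i|$; any subset of size $\gg K^{-O(1)} N$ of one of these inherits both properties; and a union of $\ll 1$ of the above is contained in the (still $\ll 1$ many) corresponding translates, with cardinality between $\asymp_K N$ and $\ll K^{O(1)} N$. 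Taking $C$ to be the largest coefficient and $L$ the largest translate-count that occur makes the two properties hold uniformly over $\mathcal{F}_K$.

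Finally I would conclude. Given $Q_1, \dots, Q_m \in \mathcal{F}_K$ and positive integers $a_1, \dots, a_m \ll 1$: all $|Q_j| \asymp_K N$, so the cardinalities $|Q_1|, \dots, |Q_m|$ are $K$-comparable; and writing $Q_j \subseteq \bigcup_{\ell \le L_j} (C P + t_{j,\ell})$ one sees that $a_1 Q_1 + \cdots + a_m Q_m$ is contained in a union of at most $\prod_j L_j^{a_j} \ll 1$ translates of $(\sum_j a_j) C P$, so \eqref{f:Pl-R+} gives $|a_1 Q_1 + \cdots + a_m Q_m| \ll K^{O(1)} N \asymp_K \max_j |Q_j|$. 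Hence $Q_1, \dots, Q_m$ are $K$-compatible. The step needing the most care --- indeed the only real obstacle --- is the bookkeeping of constants: one must check that every hidden exponent of $K$ and every translate-count occurring stays $O(1)$, which holds because $\mathcal{F}_K$ involves only boundedly many sets, boundedly many operations and bounded coefficients, and $K^{O(1)} \cdot K^{O(1)} = K^{O(1)}$; no additive-combinatorial input beyond \eqref{f:Pl-R+} and the stated triangle inequalities is needed.
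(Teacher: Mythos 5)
The paper itself omits this proof, remarking only that it is ``a straightforward application of sumset inequalities above,'' so there is no authorial argument to compare against; your reconstruction is a correct and natural realization of exactly what that remark suggests. Packaging everything into the hull $P := P_1 \cup \cdots \cup P_k$, deducing $\mathcal{D}[P] \ll K^{O(1)}$ via the pairwise bound $|P_i+P_j| \le |P_1+\cdots+P_k|$ together with $|P_j|\,|P_i+P_i| \le |P_i+P_j|^2$, and then reading off every required estimate from Pl\"unnecke--Ruzsa \eqref{f:Pl-R}, \eqref{f:Pl-R+} is clean, and the bookkeeping you flag at the end (bounded coefficients, bounded translate counts, bounded unions, so every hidden $K$-exponent stays $O(1)$) is exactly what closes the argument. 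Two minor points worth recording: your explicit exclusion of $k=1$ is correct --- there the hypothesis gives no doubling information and the statement is vacuous, and the paper's applications always have $k\ge 2$; and since the paper later feeds difference sets such as $P_j - P_j$ into this machinery (the sets $D_i$ in the proof of the $100\%$ theorem), it helps that your hull-set route handles signed combinations $nP - mP$ with no extra work via \eqref{f:Pl-R}, even though the proposition's phrasing ``sums $a_1P_1 + \cdots + a_kP_k$'' does not make that explicit.
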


\subsection{Set unions/intersections}

When dealing with a union of sets, the following inequality is often useful:
\[
\left| S_1 \cup \ldots \cup S_n\right| \geqslant \sum_{i=1}^n |S_i| - \sum_{1\le i<j \le n} |S_i \cap S_j|.
\]
Under the additional hypothesis that all set sizes are at least $k$, and all set intersections are at most $l$, this results in the inequality $|S| \ge nk - \binom{n}{2}l$, which has the disadvantage that it does not improve with growth of $n$ beyond some threshold value. The following standard proposition helps in this situation. 

\begin{proposition}\label{proposition:set_intersect}
Let $S_1, S_2, \dots, S_n$ be finite sets, and let $k \geqslant l$ be positive integers, such that the following holds:
\begin{itemize}
    \item $|S_i| \geqslant k$ for all $i$,
    \item $|S_i \cap S_j| \leqslant l$ for all $i \neq j$.
\end{itemize}
Let $S := S_1 \cup S_2 \cup \dots \cup S_n$. Then
\[
|S| \geqslant \frac{nk^2}{k + nl}.
\]
\end{proposition}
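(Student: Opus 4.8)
The plan is to count incidences between the points of $S$ and the sets $S_i$, i.e. to estimate the quantity $N := \sum_{i=1}^n |S_i| = \sum_{x \in S} d(x)$, where $d(x) := \#\{i : x \in S_i\}$ is the number of sets containing $x$. From the first hypothesis we have the lower bound $N \ge nk$. On the other hand, I would bound $\sum_{x \in S} d(x)^2$ from above: expanding,
\[
\sum_{x \in S} d(x)^2 = \sum_{i,j} |S_i \cap S_j| = \sum_i |S_i| + \sum_{i \ne j} |S_i \cap S_j| \le N + n(n-1) l \le N + n^2 l,
\]
using $|S_i \cap S_j| \le l$ for $i \ne j$ in the off-diagonal terms.

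Next I would apply the Cauchy--Schwarz inequality in the form $N^2 = \big(\sum_{x \in S} d(x)\big)^2 \le |S| \cdot \sum_{x \in S} d(x)^2$, which combined with the previous display gives
\[
N^2 \le |S|\,(N + n^2 l),
\]
hence
\[
|S| \ge \frac{N^2}{N + n^2 l}.
\]
The function $t \mapsto \frac{t^2}{t + n^2 l}$ is increasing for $t > 0$, so substituting the lower bound $N \ge nk$ yields
\[
|S| \ge \frac{n^2 k^2}{nk + n^2 l} = \frac{nk^2}{k + nl},
\]
which is exactly the claimed inequality.

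The only point that requires a word of care is the monotonicity step: one must check that $\frac{t^2}{t+c}$ is nondecreasing in $t$ for $c = n^2 l \ge 0$ and $t \ge 0$ (its derivative is $\frac{t^2 + 2ct}{(t+c)^2} \ge 0$), so replacing $N$ by the smaller quantity $nk$ only decreases the right-hand side and the inequality is preserved. Everything else is a direct expansion plus Cauchy--Schwarz, so there is no real obstacle here; the hypothesis $k \ge l$ is not even needed for this bound, it only ensures the stated form is the ``interesting'' regime.
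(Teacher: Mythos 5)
Your proof is correct and is essentially the same argument as the paper's: both double-count incidences $N = \sum_i |S_i| = \sum_{x\in S} d(x)$, apply Cauchy--Schwarz to bound $N^2 \le |S|\sum_x d(x)^2 = |S|\sum_{i,j}|S_i\cap S_j|$, and substitute $N \ge nk$ after noting monotonicity. The only cosmetic difference is that you bound the off-diagonal term by $n^2 l$ rather than the slightly sharper $(n^2-n)l$ used in the paper; both simplify to the stated $nk^2/(k+nl)$.
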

\begin{proof}
    Let $m = \sum_{i\leqslant n}\sum_{s \in S}S_i(s) \geqslant nk$. 
    Observe that 
\begin{multline*}
   m^2 = \bigg(
    \sum_{s \in S}\big( \sum_{i=1}^{n} S_i(s) \big)
    \bigg)^2
    \leqslant 
    |S|\sum_{s \in S}\big( \sum_{i=1}^{n}S_i(s)\big)^2
    = |S|\sum_{s \in S}\sum_{i, j=1}^{n}S_i(s)S_j(s) 
    = \\
    = |S|\sum_{i, j \leqslant n}|S_i \cap S_j|
    \leqslant
    |S|\big(m + (n^2 - n)l\big) ,
\end{multline*}
    which implies
\[
    |S| \geqslant \frac{m^2}{m + (n^2 - n)l} \geqslant \frac{(nk)^2}{nk + (n^2 - n)l} \geqslant 
    \frac{nk^2}{k + nl}.
\]
\end{proof}

This has the following simple corollaries.

\begin{corollary}\label{corollary:set-intersect-k2l}
If $n \gg k/l$, then $|S| \gg k^2 / l$.
\end{corollary}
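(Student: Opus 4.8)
The final statement is Corollary~\ref{corollary:set-intersect-k2l}, which is an immediate consequence of Proposition~\ref{proposition:set_intersect}. The plan is simply to feed the hypothesis $n \gg k/l$ into the bound $|S| \geqslant nk^2/(k + nl)$ and observe that the denominator is dominated by its second term.

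Concretely, I would argue as follows. By Proposition~\ref{proposition:set_intersect} we have $|S| \geqslant \frac{nk^2}{k + nl}$. The assumption $n \gg k/l$ means there is an absolute constant $c > 0$ with $n \geqslant c k/l$, equivalently $nl \geqslant ck$, so that $k + nl \leqslant (1 + c^{-1}) nl$. Substituting this into the bound gives
\[
|S| \geqslant \frac{nk^2}{k + nl} \geqslant \frac{nk^2}{(1 + c^{-1})nl} = \frac{1}{1 + c^{-1}} \cdot \frac{k^2}{l} \gg \frac{k^2}{l},
\]
which is exactly the claimed inequality (the implied constant depending only on $c$, i.e.\ on the implied constant in the hypothesis $n \gg k/l$).

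There is essentially no obstacle here: the only thing to be slightly careful about is the bookkeeping of implied constants, namely that the constant in the conclusion $|S| \gg k^2/l$ is allowed to depend on the constant in the hypothesis $n \gg k/l$, which is consistent with the Vinogradov conventions set out in the notation subsection. One could alternatively split into cases according to whether $k \leqslant nl$ or not, but since $n \gg k/l$ forces $nl \gg k$ this is unnecessary. I would present the two-line computation above and stop.
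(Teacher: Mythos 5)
Your proof is correct and is exactly the intended argument: the paper states this as a ``simple corollary'' of Proposition~\ref{proposition:set_intersect} without writing out the two-line computation, and your substitution $nl \gg k$ into the denominator of $\frac{nk^2}{k+nl}$ is precisely how one fills that in. The implied-constant bookkeeping is also handled consistently with the paper's conventions.
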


\begin{corollary}\label{corollary:set-intersect}
If all sets $S_i$ are supported in the ground set $G$ and $l|G| < k^2$, then
\[
n \leqslant \frac{|G| / k}{1 - \frac{l|G|}{k^2}}.
\]
\end{corollary}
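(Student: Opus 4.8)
The statement to prove is Corollary~\ref{corollary:set-intersect}, which claims that under the hypotheses of Proposition~\ref{proposition:set_intersect}, if all $S_i$ live inside a ground set $G$ and $l|G| < k^2$, then $n \leqslant \frac{|G|/k}{1 - l|G|/k^2}$.

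The plan is to start directly from the conclusion of Proposition~\ref{proposition:set_intersect}, namely $|S| \geqslant \frac{nk^2}{k+nl}$. Since $S = S_1 \cup \cdots \cup S_n \subseteq G$, we have $|S| \leqslant |G|$, so $|G| \geqslant \frac{nk^2}{k+nl}$, i.e. $|G|(k + nl) \geqslant nk^2$. Rearranging to isolate $n$: $|G|k \geqslant nk^2 - nl|G| = n(k^2 - l|G|)$. Here the hypothesis $l|G| < k^2$ guarantees $k^2 - l|G| > 0$, so we may divide both sides by it without reversing the inequality, obtaining
\[
n \leqslant \frac{|G|k}{k^2 - l|G|} = \frac{|G|/k}{1 - l|G|/k^2},
\]
which is exactly the claimed bound. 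The only slightly delicate point is to note that if $k^2 - l|G|$ were nonpositive the rearrangement would fail, but this is precisely excluded by the stated hypothesis; and since $k, l, n, |G|$ are all positive, dividing is legitimate.

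There is essentially no obstacle here — the whole content of the corollary is a one-line algebraic manipulation of Proposition~\ref{proposition:set_intersect} combined with the trivial bound $|S| \leqslant |G|$. The only thing worth being careful about is the direction of the inequality when clearing the (positive) denominator, which is handled by invoking $l|G| < k^2$.

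\begin{proof}
By Proposition~\ref{proposition:set_intersect}, $|S| \geqslant \dfrac{nk^2}{k + nl}$. Since $S \subseteq G$, we have $|S| \leqslant |G|$, hence $|G|(k + nl) \geqslant nk^2$, which rearranges to $|G|k \geqslant n(k^2 - l|G|)$. By hypothesis $l|G| < k^2$, so $k^2 - l|G| > 0$, and dividing gives
\[
n \leqslant \frac{|G|k}{k^2 - l|G|} = \frac{|G|/k}{1 - \frac{l|G|}{k^2}}.
\]
\end{proof}
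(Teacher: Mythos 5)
Your argument is correct and is exactly the intended one-line derivation: combine the bound $|S| \geqslant nk^2/(k+nl)$ from Proposition~\ref{proposition:set_intersect} with $|S| \leqslant |G|$ and rearrange, using $l|G| < k^2$ to ensure the division preserves the inequality. The paper omits the proof precisely because it is this short algebraic manipulation.
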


We will often deal with intersections of sets. 
These sets can be approximated by other sets that are more suitable in some respects.
The following facts will assist us in maintaining good bounds on intersections of those approximations; the proof is a routine exercise.

\begin{proposition}\label{proposition:set-triangles}
Let $T_1, \ldots, T_k, U_1, \ldots, U_k$ be arbitrary sets. Then
\begin{itemize}[itemsep=0.3em]
\item 
$\left|(T_1 \cap \ldots \cap T_k)\ \triangle\ (U_1 \cap \ldots \cap U_k) \right| \leqslant |T_1\ \triangle\ U_1| + \ldots + |T_k\ \triangle\ U_k|$,
\item 
$\left|(T_1 \cup \ldots \cup T_k)\ \triangle\ (U_1 \cup \ldots \cup U_k) \right| \leqslant |T_1\ \triangle\ U_1| + \ldots + |T_k\ \triangle\ U_k|$,
\end{itemize}
\end{proposition}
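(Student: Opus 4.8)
The plan is to prove each of the two symmetric-difference inequalities by induction on $k$, reducing everything to the case $k=2$. The key observation for $k=2$ is a pointwise one: for any sets $T_1, T_2, U_1, U_2$, an element $x$ lying in $(T_1\cap T_2)\triangle(U_1\cap U_2)$ must be ``misclassified'' by at least one of the two coordinates, i.e.\ it lies in $T_1\triangle U_1$ or in $T_2\triangle U_2$. Formally, if $x\in T_1\cap T_2$ but $x\notin U_1\cap U_2$, then $x\notin U_1$ or $x\notin U_2$; in the first case $x\in T_1\setminus U_1\subseteq T_1\triangle U_1$, in the second $x\in T_2\setminus U_2\subseteq T_2\triangle U_2$. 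The symmetric case ($x\in U_1\cap U_2$, $x\notin T_1\cap T_2$) is identical with the roles of $T_i$ and $U_i$ swapped. Hence $(T_1\cap T_2)\triangle(U_1\cap U_2)\subseteq (T_1\triangle U_1)\cup(T_2\triangle U_2)$, and taking cardinalities (with subadditivity of $|\cdot|$ over unions) gives the claim for $k=2$.

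For the induction step, write $T_1\cap\dots\cap T_k = (T_1\cap\dots\cap T_{k-1})\cap T_k$ and similarly for the $U_i$'s, then apply the $k=2$ case with the ``first coordinate'' being the $(k-1)$-fold intersections:
\[
\left|(T_1\cap\dots\cap T_k)\triangle(U_1\cap\dots\cap U_k)\right|
\le \left|(T_1\cap\dots\cap T_{k-1})\triangle(U_1\cap\dots\cap U_{k-1})\right| + |T_k\triangle U_k|.
\]
The inductive hypothesis bounds the first term by $|T_1\triangle U_1|+\dots+|T_{k-1}\triangle U_{k-1}|$, and adding $|T_k\triangle U_k|$ closes the induction. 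The union statement is handled in exactly the same way: for $k=2$ one checks the pointwise inclusion $(T_1\cup T_2)\triangle(U_1\cup U_2)\subseteq(T_1\triangle U_1)\cup(T_2\triangle U_2)$ by the same case analysis (if $x\in(T_1\cup T_2)\setminus(U_1\cup U_2)$ then $x\in T_i$ for some $i$ while $x\notin U_i$, so $x\in T_i\triangle U_i$), and then the same induction applies.

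Alternatively, and perhaps even more cleanly, one can avoid induction entirely by giving a direct pointwise argument for general $k$: if $x\in(T_1\cap\dots\cap T_k)\triangle(U_1\cap\dots\cap U_k)$, then WLOG $x$ is in the first intersection and not the second, so $x\in T_i$ for all $i$ but $x\notin U_j$ for some $j$, whence $x\in T_j\triangle U_j$; thus $(T_1\cap\dots\cap T_k)\triangle(U_1\cap\dots\cap U_k)\subseteq\bigcup_{j=1}^k (T_j\triangle U_j)$, and subadditivity finishes it. The union case is symmetric. There is no real obstacle here — the only thing to be careful about is the symmetry of $\triangle$, i.e.\ handling both ``$x$ in left, not in right'' and ``$x$ in right, not in left'' (which one gets for free by swapping $T\leftrightarrow U$), and the elementary fact $|X\cup Y|\le|X|+|Y|$. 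This is why the statement is flagged as ``a routine exercise.''
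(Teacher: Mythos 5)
Your proof is correct. The paper itself gives no proof — it dismisses the statement as ``a routine exercise'' — and your direct pointwise argument (the symmetric difference of the $k$-fold intersections, resp.\ unions, is contained in $\bigcup_{j=1}^k (T_j\triangle U_j)$, followed by subadditivity of cardinality) is exactly the routine argument one would expect; the inductive version you give first is also fine but unnecessary once you notice the direct inclusion.
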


The following proposition will be helpful when dealing with intersections of sets, having algebraic-independence property:
\begin{proposition}\label{proposition:prods-and-diffs}
    Let $T_1, \ldots, T_k, U_1, \ldots, U_k$ be arbitrary sets, supported on ground set $G$, such that $\max_i |T_i\ \triangle\ U_i| \leqslant \xi|G|$. Then 
    \[
    \frac{1}{|G|^{k - 1}}|T_1|\ldots|T_k| = \frac{1}{|G|^{k - 1}}|U_1|\ldots|U_k| + O(\xi |G|).
    \]
\end{proposition}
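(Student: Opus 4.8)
The plan is to induct on $k$, or equivalently to telescope, replacing each $T_i$ by $U_i$ one at a time and controlling the error introduced at each step. Write $a_i = |T_i|$ and $b_i = |U_i|$, and note that the hypothesis $\max_i |T_i \triangle U_i| \le \xi|G|$ gives $|a_i - b_i| \le \xi|G|$ for every $i$; also each $a_i, b_i \le |G|$ since everything is supported on $G$. The identity to exploit is the standard telescoping factorization
\[
\prod_{i=1}^k a_i - \prod_{i=1}^k b_i = \sum_{j=1}^k \Bigl(\prod_{i<j} b_i\Bigr)(a_j - b_j)\Bigl(\prod_{i>j} a_i\Bigr).
\]
Dividing by $|G|^{k-1}$, the $j$-th summand is bounded in absolute value by
\[
\frac{1}{|G|^{k-1}}\cdot |G|^{j-1}\cdot \xi|G| \cdot |G|^{k-j} = \xi|G|,
\]
using $\prod_{i<j} b_i \le |G|^{j-1}$, $|a_j-b_j|\le \xi|G|$, and $\prod_{i>j}a_i \le |G|^{k-j}$. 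Summing over the $k$ values of $j$ yields
\[
\Bigl|\frac{1}{|G|^{k-1}}\prod_i |T_i| - \frac{1}{|G|^{k-1}}\prod_i |U_i|\Bigr| \le k\xi|G| = O(\xi|G|),
\]
which is exactly the claimed estimate, since $k = O(1)$ throughout the paper.

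There is essentially no obstacle here; the only point requiring a moment's care is the bookkeeping in the telescoping identity — making sure that in the $j$-th term the factors with index $<j$ have already been switched to $U$ while those with index $>j$ are still $T$, so that each factor is genuinely bounded by $|G|$ and the single "defect" factor $a_j - b_j$ is bounded by $\xi|G|$. One could equally phrase this as a $k$-step induction: assuming the statement for $k-1$ sets, write
\[
\frac{|T_1|\cdots|T_k|}{|G|^{k-1}} = \frac{|T_1|\cdots|T_{k-1}|}{|G|^{k-2}}\cdot\frac{|T_k|}{|G|},
\]
replace the first factor by its $U$-analogue with error $O(\xi|G|)\cdot \frac{|T_k|}{|G|} = O(\xi|G|)$, and then replace $|T_k|/|G|$ by $|U_k|/|G|$ with error $\frac{|U_1|\cdots|U_{k-1}|}{|G|^{k-2}}\cdot\frac{|T_k|-|U_k|}{|G|} = O(1)\cdot O(\xi) \cdot |G|$... wait, one must be slightly careful that $\frac{|U_1|\cdots|U_{k-1}|}{|G|^{k-2}} \le |G|$, which holds, so this error is also $O(\xi|G|)$. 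Either route gives the result; the telescoping identity is cleaner, so that is the one I would write up.
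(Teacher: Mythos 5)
Your proof is correct and uses essentially the same telescoping identity as the paper; the only cosmetic difference is that the paper first normalizes by setting $t_i = |T_i|/|G|$, $u_i = |U_i|/|G|$ so that all factors lie in $[0,1]$, whereas you work with the unnormalized cardinalities and divide by $|G|^{k-1}$ at the end. The bookkeeping and the final bound $k\xi|G| = O(\xi|G|)$ (with $k = O(1)$) match the paper exactly.
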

\begin{proof}
    Define $t_i = |T_i| / |G|, u_i = |U_i| / |G|$. Clearly, $0 \leqslant t_i, u_i \leqslant 1$, and $|t_i - u_i| \leqslant \xi$. The statement now is equivalent to $t_1\ldots t_k = u_1\ldots u_k + O_k(\xi)$, which follows from 
    \[
    t_1\ldots t_k - u_1\ldots u_k = 
    t_2\ldots t_k (t_1 - u_1) + u_1t_3\ldots t_k (t_2 - u_2) + \ldots + u_1\ldots u_{k - 1}(t_k - u_k).
    \]
\end{proof}

\section{Combinatorial properties of the Wiener norm}
\label{sec:comb-wiener}

In this section, we develop combinatorial tools for working with intersections of sets.  
In many of our arguments, we will require the following \emph{algebraic intersection property} (AIP), namely, the AIP takes place for some sets $P_1,\dots, P_k$ if  for generic exponents $r_1, \ldots, r_k$ one has 
\[
\bigl|\sqrt[r_1]{P_1} \cap \cdots \cap \sqrt[r_k]{P_k}\bigr|
= \frac{1 + o(1)}{p^{k-1}}|P_1|\cdots|P_k|.
\]
The AIP holds for a broad class of sets, which includes, in particular, sets with bounded Wiener norm.

\begin{lemma}\label{lemma:aip}
Let $W_1, \ldots, W_k$ be sets with Wiener norms bounded by $M$. Let tuple $(r_1, \ldots, r_k)$ be generic. Then
\[
\bigl|\sqrt[r_1]{W_1} \cap \cdots \cap \sqrt[r_k]{W_k}\bigr|
= \frac{1}{p^{\,k-1}}\,|W_1|\cdots|W_k|
  + O\left(M^{k} p^{1 - \eps}\right),
\]
where $\eps = \eps(k) > 0$.
\end{lemma}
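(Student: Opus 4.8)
The plan is to expand the size of the intersection using Fourier inversion on each of the $k$ constraints. Writing $S := \sqrt[r_1]{W_1} \cap \cdots \cap \sqrt[r_k]{W_k}$, we have
\[
|S| = \sum_{x \in \F_p} \prod_{i=1}^{k} W_i(x^{r_i}) = \sum_{x \in \F_p} \prod_{i=1}^{k} \left( \frac{1}{p}\sum_{\xi_i \in \FF{\F}_p} \FF{W_i}(\xi_i)\, e(\xi_i x^{r_i}) \right).
\]
Expanding the product over $i$ and swapping the order of summation gives
\[
|S| = \frac{1}{p^{k}} \sum_{\xi_1, \ldots, \xi_k} \FF{W_1}(\xi_1)\cdots\FF{W_k}(\xi_k) \sum_{x \in \F_p} e\!\left( \xi_1 x^{r_1} + \cdots + \xi_k x^{r_k} \right).
\]
The diagonal term where all $\xi_i = 0$ contributes exactly $\frac{1}{p^{k}}\cdot |W_1|\cdots|W_k| \cdot p = \frac{1}{p^{k-1}}|W_1|\cdots|W_k|$, which is the main term. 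First I would isolate this term and bound the rest.

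For the off-diagonal terms (at least one $\xi_i \neq 0$), the inner sum is an exponential sum of the form $\sum_x e(f(x))$ with $f(x) = \sum_i \xi_i x^{r_i}$ a nonzero ``sparse polynomial'' in the exponent variable, after reducing the exponents $r_i$ modulo $p-1$ (using $x^{p-1}=1$ for $x \neq 0$, the $x=0$ term being a harmless $O(1)$). Since the tuple $(r_1, \ldots, r_k)$ is generic — meaning $(r_i, p-1) = O(1)$ and $(r_i - r_j, p-1) = O(1)$, in particular all these are $< p^{1-\delta}$ — Bourgain's theorem (cited in the excerpt) applies and yields $|\sum_x e(f(x))| \le p^{1-\eps'}$ for some $\eps' = \eps'(k) > 0$, uniformly over all nonzero coefficient vectors $(\xi_1, \ldots, \xi_k)$. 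There is a subtlety: if some $\xi_i = 0$ the relevant polynomial only has fewer than $k$ terms, but Bourgain's bound still applies to any such nontrivial subsum (and if literally all the surviving terms cancel, i.e. $f$ is constant, this forces — by genericity, since distinct $r_i$ give ``independent'' monomials — all $\xi_i = 0$, contradiction; I would spell out this degeneracy check carefully). One should also double-check the case where two $r_i$ coincide modulo $p-1$, but distinctness of the $r_i$ together with $(r_i - r_j, p-1)$ small rules this out for large $p$.

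Combining: the total off-diagonal contribution is at most
\[
\frac{1}{p^{k}} \cdot p^{1-\eps'} \sum_{\xi_1, \ldots, \xi_k} |\FF{W_1}(\xi_1)| \cdots |\FF{W_k}(\xi_k)| = \frac{p^{1-\eps'}}{p^{k}} \prod_{i=1}^{k} \left( \sum_{\xi_i} |\FF{W_i}(\xi_i)| \right) = \frac{p^{1-\eps'}}{p^{k}} \prod_{i=1}^{k} \left( p \wiener{W_i} \right) \le M^{k} p^{1-\eps'},
\]
using the definition $\wiener{W_i} = \frac1p \sum_\xi |\FF{W_i}(\xi)|$ and the hypothesis $\wiener{W_i} \le M$. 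Setting $\eps := \eps'$ gives exactly the claimed error term $O(M^k p^{1-\eps})$. I also need to account for the $O(1)$ discrepancy from handling $x = 0$ separately (contributing at most $\prod_i \wiener{W_i} \cdot O(1) \le M^k \cdot O(1)$, which is absorbed), and for the reduction of exponents mod $p-1$ not affecting membership $x^{r_i} \in W_i$ when $x \neq 0$.

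The main obstacle is the degeneracy bookkeeping for the exponential sum: ensuring that whenever $(\xi_1, \ldots, \xi_k) \neq 0$ the polynomial $\sum_i \xi_i x^{r_i}$ (with exponents reduced mod $p-1$, i.e. viewed as a character sum over $\F_p^*$) is genuinely nonconstant and has exponent pattern meeting the hypotheses of Bourgain's theorem — in particular that vanishing of a subsum cannot happen for generic $r_i$, and that the case of some coordinates being zero is covered. Everything else is routine Fourier expansion and the triangle inequality.
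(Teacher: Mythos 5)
Your proposal takes essentially the same route as the paper: Fourier-expand each indicator $W_i(x^{r_i})$, isolate the all-zero frequency tuple as the main term $\frac{1}{p^{k-1}}|W_1|\cdots|W_k|$, apply Bourgain's exponential-sum bound to every nonzero frequency tuple to get $O(p^{1-\eps})$ per inner sum, and then bound the resulting error by $\prod_i (p\wiener{W_i}) \le M^k p^k$ times $p^{1-\eps}/p^k$. The only difference is presentational: you carefully flag the degeneracy cases (some but not all $\xi_i=0$, the $x=0$ term, reduction of exponents mod $p-1$), which the paper's proof silently glosses over but which are handled exactly as you suggest — any nonempty subtuple of a generic tuple is still generic, so Bourgain's hypotheses persist and the argument goes through.
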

\begin{proof}
Identifying sets $W_i$'s with their indicator functions, we write
\begin{equation}\label{eq:aip}
\bigl|\sqrt[r_1]{W_1} \cap \cdots \cap \sqrt[r_k]{W_k}\bigr|
= \sum_{x \in \F_p} W_1(x^{r_1}) \cdots W_k(x^{r_k}).
\end{equation}

Using the Fourier inversion formula we can write 
\[
W_i(x^r_i) = \frac{1}{p}\sum_{u_i}\FF{W}_i (u_i)e(u_i x^{r_i})
\]
for all $i = 1, \ldots, k$. Plugging it in we arrive at
\begin{multline*}
\bigl|\sqrt[r_1]{W_1} \cap \cdots \cap \sqrt[r_k]{W_k}\bigr| = 
\frac{1}{p^k}\sum_x\sum_{u_1, \ldots, u_k}
\FF{W}_1 (u_1)\ldots \FF{W}_k(u_k) e(u_1 x^{r_1} + \ldots + u_k x^{r_k}) = \\ = 
\frac{1}{p^k}\sum_{u_1, \ldots, u_k}
\FF{W}_1 (u_1)\ldots \FF{W}_k (u_k) \sum_x e(u_1 x^{r_1} + \ldots + u_k x^{r_k}).
\end{multline*} 
The tuple $(u_1, \ldots, u_k) = (0, \ldots, 0)$ contributes the main term $\frac{1}{p^{k - 1}}|W_1|\ldots|W_k|$.
By Bourgain's bound~\eqref{eq:bourgain}, the last sum has order $O(p^{1 - \eps})$, $\eps = \eps(k)$ if $(u_1, \ldots, u_k) \neq (0, \ldots, 0)$. Thus, 
\[
\bigl|\text{error term}| \ll 
\frac{1}{p^k}\sum_{u_1, \ldots, u_k}
|\FF{W}_1 (u_1)|\ldots |\FF{W}_k (u_k)| \times p^{1 - \eps} \leqslant M^k p^{1 - \eps}.
\]
\end{proof}

\begin{remark}
If tuple $(r_1, \ldots, r_k)$ is also bounded, then the error term is $O(\sqrt{p})$, which is guaranteed by Bombieri's exponential bound.
\end{remark}

Since finite intersections and complements of sets with bounded Wiener norm also have bounded Wiener norms, we can generate a lot of sets satisfying AIP. In practice, we would also like to have this property for sets, which can be \emph{approximated} by sets with small Wiener norms.

\begin{definition}
    We call a set $P$ \emph{approximable} if $|P| \gg p$ and there exists a set $W$ such that 
    \begin{itemize}[itemsep=0.5em]
        \item $|W\ \triangle\ P| = o(p)$ (small symmetric difference),
        \item $\wiener{W} = p^{o(1)}$ (bounded Wiener norm).
    \end{itemize}
    We denote the family of such sets by $\mathcal{F}$.
\end{definition}
\begin{remark}
\label{remark:F-levels}
    This definition is rather informal, as it does not specify the dependencies quantitatively. A possible solution would be to define the \emph{levels} $\mathcal{F}(c, \eps, M)$ consisting of sets $P, |P| \geqslant cp$ which might be approximated by sets $W$ such that $|W\ \triangle\ P| \leqslant \eps p$ and $\wiener{W} \leqslant M$. This would be a well-defined notion, making the bounds in Proposition \ref{proposition:F-closed} below explicit. However, in the context of this work, we have no interest in the particular expression of those dependencies, and we intentionally keep this definition informal for the sake of being concise.
\end{remark}

The family $\mathcal{F}$ provides a much broader source of sets with AIP:
\begin{proposition}
\label{proposition:aip}
    Any collection of sets $P_1, \ldots, P_k$ from the family $\mathcal{F}$ has AIP.
\end{proposition}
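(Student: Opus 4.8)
The plan is to reduce Proposition~\ref{proposition:aip} to Lemma~\ref{lemma:aip} by replacing each $P_i \in \mathcal{F}$ with its Wiener-norm approximant $W_i$ and controlling the error coming from the symmetric differences. Concretely, for each $i$ fix a set $W_i$ with $|W_i \triangle P_i| = o(p)$ and $\wiener{W_i} = p^{o(1)} =: M$. The target is to compare
\[
\bigl|\sqrt[r_1]{P_1} \cap \cdots \cap \sqrt[r_k]{P_k}\bigr|
\quad\text{with}\quad
\bigl|\sqrt[r_1]{W_1} \cap \cdots \cap \sqrt[r_k]{W_k}\bigr|,
\]
and then compare $\tfrac{1}{p^{k-1}}|W_1|\cdots|W_k|$ with $\tfrac{1}{p^{k-1}}|P_1|\cdots|P_k|$.

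For the second comparison, I would invoke Proposition~\ref{proposition:prods-and-diffs} with ground set $G = \F_p$: since $|P_i \triangle W_i| \le \xi p$ with $\xi = o(1)$, we get $\tfrac{1}{p^{k-1}}|P_1|\cdots|P_k| = \tfrac{1}{p^{k-1}}|W_1|\cdots|W_k| + O(\xi p) = \tfrac{1}{p^{k-1}}|W_1|\cdots|W_k| + o(p)$. For the first comparison, the key observation is that $\sqrt[r_i]{\cdot}$ is a set operation that plays well with symmetric differences: $\sqrt[r_i]{P_i} \triangle \sqrt[r_i]{W_i} \subseteq \sqrt[r_i]{P_i \triangle W_i}$, and since $r_i$ is generic, $|\sqrt[r_i]{P_i \triangle W_i}| \le (r_i, p-1)\,|P_i \triangle W_i| \ll |P_i \triangle W_i| = o(p)$, using the bound on $|\sqrt[r]{A}|$ recorded right after the definition of $\sqrt[r]{A}$. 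Then Proposition~\ref{proposition:set-triangles} (first bullet, applied to the sets $T_i = \sqrt[r_i]{P_i}$, $U_i = \sqrt[r_i]{W_i}$) gives
\[
\Bigl|\bigl(\sqrt[r_1]{P_1}\cap\cdots\cap\sqrt[r_k]{P_k}\bigr)\ \triangle\ \bigl(\sqrt[r_1]{W_1}\cap\cdots\cap\sqrt[r_k]{W_k}\bigr)\Bigr|
\le \sum_{i=1}^{k} \bigl|\sqrt[r_i]{P_i}\ \triangle\ \sqrt[r_i]{W_i}\bigr| = o(p),
\]
so the two intersection cardinalities differ by $o(p)$.

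Chaining the two comparisons through Lemma~\ref{lemma:aip} (which gives $\bigl|\sqrt[r_1]{W_1}\cap\cdots\cap\sqrt[r_k]{W_k}\bigr| = \tfrac{1}{p^{k-1}}|W_1|\cdots|W_k| + O(M^k p^{1-\eps})$, and $M^k p^{1-\eps} = p^{o(1)}p^{1-\eps} = o(p)$ since $M = p^{o(1)}$) yields
\[
\bigl|\sqrt[r_1]{P_1}\cap\cdots\cap\sqrt[r_k]{P_k}\bigr| = \tfrac{1}{p^{k-1}}|P_1|\cdots|P_k| + o(p).
\]
Finally, since $|P_i| \gg p$ for each $i$ (definition of $\mathcal{F}$), the main term $\tfrac{1}{p^{k-1}}|P_1|\cdots|P_k| \gg p$, so the $o(p)$ error is $o(1)$ relative to it, giving exactly the AIP relation $\bigl|\sqrt[r_1]{P_1}\cap\cdots\cap\sqrt[r_k]{P_k}\bigr| = \tfrac{1+o(1)}{p^{k-1}}|P_1|\cdots|P_k|$. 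The only mildly delicate point — the ``main obstacle'', though it is not really an obstacle — is bookkeeping the informal $o(1)$/$p^{o(1)}$ quantifiers from the definition of $\mathcal{F}$ (cf.\ Remark~\ref{remark:F-levels}): one should check that finitely many applications of $o(p)$-type estimates, all with $k$ fixed, still combine to $o(p)$, which they do since $k = O(1)$ throughout. Nothing here requires the boundedness of the tuple, so the generic hypothesis suffices.
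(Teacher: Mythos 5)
Your proof is correct and follows exactly the paper's own argument: replace each $P_i$ by its Wiener-norm approximant $W_i$, control the symmetric difference of the $r_i$-th roots, apply Proposition~\ref{proposition:set-triangles} and Lemma~\ref{lemma:aip} to handle the intersection, then use Proposition~\ref{proposition:prods-and-diffs} and $|P_i| \gg p$ to pass to the multiplicative $1 + o(1)$ form. The only (minor and welcome) additions are your explicit check that $M^k p^{1-\eps} = o(p)$ and the observation that boundedness of the tuple is not required here.
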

\begin{proof}
Let $(r_1, \ldots, r_k)$ be an arbitrary generic tuple, and let $W_1, \ldots, W_k$ be sets approximating $P_1, \ldots, P_k$. Then, for all $i$, the symmetric difference $\sqrt[r]{P_i} \triangle \sqrt[r]{W_i}$ is a subset of $\sqrt[r]{P_i \triangle W_i}$, and therefore its  size does not execeed $(r, p - 1)|P_i \triangle W_i| = o(p)$. Thus,
\begin{multline*}
\bigl|\sqrt[r_1]{P_1} \cap \cdots \cap \sqrt[r_k]{P_k}\bigr| 
\oversetgap{\text{Proposition~\ref{proposition:set-triangles}}}{=}
\bigl|\sqrt[r_1]{W_1} \cap \cdots \cap \sqrt[r_k]{W_k}\bigr| + o(p) = \\
\oversetgap{\text{Lemma~\ref{lemma:aip}}}{=} 
\frac{1}{p^{k-1}}|W_1|\ldots|W_k| + o(p) 
\oversetgap{\text{Proposition~\ref{proposition:prods-and-diffs}}}{=} 
\frac{1}{p^{k - 1}}|P_1|\ldots|P_k| + o(p) = \\
\oversetgap{|P_i| \gg p}{=}
\frac{1 + o(1)}{p^{k - 1}}|P_1|\ldots|P_k|.
\end{multline*}
\end{proof}

\medskip

There is a certain liberty one can take when working with this family:
\begin{proposition}\label{proposition:F-closed}
The family $\mathcal{F}$ has the following properties:
\begin{itemize}[itemsep=0.5em]
    \item[(1)] If $P_1, \ldots, P_k \in \mathcal{F}$, then $P_1 \cup \ldots \cup P_k \in \mathcal{F}$,

    \item[(2)] If $P \in \mathcal{F}$ and $|X| \ll 1$, then $X + P \in \mathcal{F}$,

    \item[(3)] Let $A, |A| = \alpha p$ be an arbitrary dense set, and let $\varepsilon \in (0,1)$.  
    There exists $D \in \mathcal{F}$ such that 
    \[
     A -_{\varepsilon\alpha^2} A \subseteq D \subseteq A - A.
    \]
\end{itemize}
 
\end{proposition}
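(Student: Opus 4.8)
The three items have a common strategy: exhibit an explicit approximating set of small Wiener norm for each member of $\mathcal{F}$ we want to construct, using the basic Wiener-norm inequalities recalled above (the triangle inequality for disjoint unions, multiplicativity, $\wiener{P^c} = \wiener{P} + O(1)$, and $\wiener{\{x\}+P} = \wiener{P}$ since translation only multiplies each Fourier coefficient by a unimodular character). For (1), if $W_i$ approximates $P_i$ with $|W_i \triangle P_i| = o(p)$ and $\wiener{W_i} = p^{o(1)}$, then I would take $W := W_1 \cup \ldots \cup W_k$. The symmetric difference $(P_1\cup\ldots\cup P_k)\triangle(W_1\cup\ldots\cup W_k)$ is $o(p)$ by Proposition~\ref{proposition:set-triangles}, and $|P_1\cup\ldots\cup P_k|\gg p$ since each $P_i\gg p$; the only point needing care is the Wiener norm of a union, which is not directly one of the listed properties. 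I would write $W_1\cup\ldots\cup W_k$ as a disjoint union of at most $2^k$ pieces, each an intersection of some $W_i$'s and some complements $W_j^c$, and bound each piece by $\prod_i \wiener{W_i}^{\pm}\cdot (1+O(1))^{k}$ using multiplicativity and the complement rule; summing at most $2^k = O(1)$ such terms keeps the norm $p^{o(1)}$.

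For (2): if $W$ approximates $P$ and $X = \{x_1,\ldots,x_m\}$ with $m = O(1)$, take $W' := (x_1 + W)\cup\ldots\cup(x_m + W)$. Each translate $x_j + W$ has the same Wiener norm as $W$ and symmetric difference $O(p\cdot o(1))$ from $x_j + P$; then $|X+P \triangle W'| \le \sum_j |(x_j+P)\triangle(x_j+W)| = o(p)$ again by Proposition~\ref{proposition:set-triangles}, $|X+P|\gg p$, and the Wiener norm is controlled exactly as in (1) since we are again taking a bounded union of small-norm sets.

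For (3), the substantive item: I want $D$ with $A -_{\varepsilon\alpha^2}A \subseteq D \subseteq A-A$ and $D\in\mathcal{F}$. The natural candidate is $D := \{x : r_{A-A}(x) \ge \delta p\}$ for a suitable threshold $\delta$ between $\varepsilon\alpha^2$ and (effectively) the bulk; the containments are then nearly immediate for $\delta$ slightly below $\varepsilon\alpha^2$ after adjusting constants, once we check $D\subseteq A-A$ (trivial, a popular difference is a difference) and $A-_{\varepsilon\alpha^2}A\subseteq D$ (by definition of the threshold). The density bound $|D|\gg p$ follows since $0\in D$ is not enough — rather, $\sum_x r_{A-A}(x) = |A|^2 = \alpha^2 p^2$ while $r_{A-A}(x)\le |A| = \alpha p$, so the set where $r_{A-A}(x)\ge \frac{\alpha^2}{2}p$ has size $\gg \alpha p \gg p$; choosing $\varepsilon$ small makes $D$ at least this large. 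The genuinely hard part is producing the approximating set $W$ of Wiener norm $p^{o(1)}$: this is exactly where the theory of "wrappers" from~\cite{Semchankau_wrappers} (advertised in the organization section and to be recalled in Section~\ref{sec:comb-wiener}) enters — a wrapper is a set of small Wiener norm sandwiched between a popular-difference set and the full difference set. So I expect the proof of (3) to invoke that wrapper construction essentially as a black box: one applies the wrapper lemma to $A$ with parameter $\varepsilon$ to obtain $W$ with $\wiener{W} = p^{o(1)}$ and $A-_{\varepsilon\alpha^2}A \subseteq W \subseteq A-A$, and then the symmetric-difference bound $|W \triangle D| = o(p)$ between two sets both pinched between $A-_{\varepsilon'\alpha^2}A$ and $A-A$ follows from the standard fact that the difference set $A-A$ and its popular part differ by $o(p)$ as $\varepsilon'\to 0$ — or more simply one just takes $D = W$ itself. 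The main obstacle is thus not a computation but correctly citing and aligning the quantitative parameters of the wrapper theorem; everything else reduces to the bounded-union Wiener-norm bookkeeping established in handling (1) and (2).
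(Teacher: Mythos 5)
Your proposals for (1) and (2) match the paper's proof exactly: the paper also passes from the union $W_1\cup\dots\cup W_k$ (respectively $X+W$) to a disjoint Venn-diagram decomposition into $\bigcap_{i\in I}W_i\setminus\bigcap_{j\in J}W_j$ and controls the Wiener norm of each piece via intersections and complements, then sums at most $2^{O(1)}$ terms.

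For (3) you correctly identify the two ingredients the paper uses --- taking $D$ to be a level set of the correlation $A\circ A$ and invoking the wrapper machinery (the Supplementary Lemma) to approximate $D$ --- but two of your side-remarks would not survive being made precise. First, the ``standard fact'' that $A-A$ and its popular part $A-_{\varepsilon\alpha^2}A$ differ by $o(p)$ as $\varepsilon\to 0$ is false: for a long arithmetic progression $A$, $|A-A|\approx 2|A|$ while the popular differences form a set of size about $|A|$, a gap of order $p$. The paper never needs this; it applies the Supplementary Lemma to $A\circ A$ with $\eta_1=\varepsilon\alpha^2$, $\delta=\varepsilon\alpha^2/2$, $\eta_2=1$, and the resulting guarantees $X\setminus Y\subseteq W$, $W\setminus Y\subseteq X^+$, $|Y|\le\xi p$ (with $\xi$ chosen small) give the symmetric-difference bound directly, without any claim about the size of $A-A$ versus its popular part. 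Second, the shortcut ``just take $D=W$'' fails because the wrapper $W$ is not guaranteed to lie inside $A-A$ (it is only known that $W\setminus Y\subseteq X^+$), so the required containment $D\subseteq A-A$ would be lost; one must keep $D$ as the level set and use $W$ only as its approximant witnessing membership in $\mathcal{F}$.
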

Again, these inclusions are stated rather informally; see Remark \ref{remark:F-levels} above.

\medskip

The proof of Proposition \ref{proposition:F-closed} is given below, and for the proof of the last property (3), we will need to recall the results of our previous work.

Namely,  following~\cite{Semchankau_wrappers}, we introduce special subsets of $\F_p$ called \emph{wrappers}, each associated with two parameters: \emph{granularity}~$\varepsilon > 0$ and \emph{dimension}~$d \ge 1$.  
A wrapper $W$ with granularity~$\varepsilon$ and dimension~$d$ satisfies
\[
    \wiener{W} \le 
    \left(\frac{C \log p}{\varepsilon}\right)^{d},
\]
where $C > 0$ is an absolute constant.

\medskip

The following result, taken from~\cite{Semchankau_wrappers}, shows that level sets of convolutions can be efficiently approximated by wrappers.

\begin{namedlemma}[Supplementary]\label{lem:supp-wrapper}
Let $\alpha, \beta \in (0, 1)$ and $\delta, \xi > 0$.  
Let $A, B \subseteq \F_p$ with $|A| \ge \alpha p$ and $|B| \ge \beta p$, and let $\eta_1, \eta_2 \in \R$.  
Define
\[
    X := 
    \Bigl\{x \in \F_p : \tfrac{1}{p}(A * B)(x) \in [\eta_1, \eta_2]\Bigr\},
    \qquad
    X^{+} := 
    \Bigl\{x \in \F_p : \tfrac{1}{p}(A * B)(x) \in [\eta_1 - \delta,\, \eta_2 + \delta]\Bigr\}.
\]
Then there exist
\begin{itemize}
    \item a wrapper $W$ of granularity $\eps$ and dimension $d$ where
        \[
            \varepsilon = \varepsilon(\alpha, \beta, \delta)
            \gg \min\Bigl(\tfrac{\delta}{\sqrt{\alpha\beta}},\, 1\Bigr),
        \ \ 
            d = d(\alpha, \beta, \delta, \xi)
            \ll \max\Bigl(\tfrac{\alpha\beta}{\delta^2},\, 1\Bigr)
            \log\Bigl(\tfrac{1}{\xi}\Bigr),
        \]
    \item a set $Y \subseteq \F_p$ with $|Y| \le \xi p$,
\end{itemize}
such that
\[
    X \setminus Y \subseteq W,
    \qquad
    W \setminus Y \subseteq X^{+}.
\]
\end{namedlemma}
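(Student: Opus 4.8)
The plan is to run a major/minor arc decomposition on the normalized convolution: the super-level set of the major-arc part will serve as the wrapper $W$, and the set on which the minor-arc part is large will serve as the exceptional set $Y$. Put $g(x):=\tfrac1p(A*B)(x)$, so by \eqref{f:F_svertka} one has $\widehat{g}(\gamma)=\tfrac1p\widehat{A}(\gamma)\widehat{B}(\gamma)$, and $X=\{x:g(x)\in[\eta_1,\eta_2]\}$, $X^{+}=\{x:g(x)\in[\eta_1-\delta,\eta_2+\delta]\}$. Set $\a:=|A|/p$, $\b:=|B|/p$; these are at least the densities in the statement, and since the asserted lower bound for $\eps$ is nonincreasing and the asserted upper bound for $d$ is nondecreasing in $\a,\b$, it suffices to prove the lemma with these values. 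By Parseval \eqref{F_Par} and Cauchy--Schwarz, $\wiener{g}=\tfrac1{p^2}\sum_{\gamma}|\widehat{A}(\gamma)|\,|\widehat{B}(\gamma)|\le\tfrac1{p^2}\sqrt{p|A|}\,\sqrt{p|B|}=\sqrt{\a\b}$; also $0\le g\le1$ with mean $\a\b$. If $[\eta_1,\eta_2]\cap[0,1]=\emptyset$ take $W=\emptyset$. In the degenerate regime $\delta\gg\sqrt{\a\b}$ one will end up with $\Lambda=\{0\}$, $W\in\{\emptyset,\F_p\}$, $\eps=1$, $d=1$, and only the bound on $|Y|$ established below is needed; so assume henceforth $\delta\ll\sqrt{\a\b}$.

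\emph{Major/minor split.} Fix a threshold $\theta\in(0,1]$, to be chosen, and let $\Lambda:=\{\gamma\in\F_p:|\widehat{A}(\gamma)|\ge\theta|A|\}$. Then $0\in\Lambda$ and, by \eqref{F_Par}, $|\Lambda|\le p|A|/(\theta^2|A|^2)=1/(\theta^2\a)$. Split $g=g_\Lambda+h$ where $\widehat{g_\Lambda}=\widehat{g}\cdot\mathbf 1_{\Lambda}$; by the inversion formula \eqref{f:inverse}, $g_\Lambda(x)=\tfrac1{p^2}\sum_{\gamma\in\Lambda}\widehat{A}(\gamma)\widehat{B}(\gamma)e(\gamma x)$ depends on $x$ only through the $d:=|\Lambda|$ linear phases $(e(\gamma x))_{\gamma\in\Lambda}$, with $\wiener{g_\Lambda}\le\wiener{g}\le\sqrt{\a\b}$; and $h=\tfrac1p A'*B$ where $A'$ is the function with $\widehat{A'}=\widehat{A}\cdot\mathbf 1_{\Lambda^c}$, so $\|\widehat{A'}\|_\infty<\theta|A|$. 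Parseval applied to $B$ gives
\[
\frac1p\sum_{x}|h(x)|^2=\frac1{p^4}\sum_{\gamma\notin\Lambda}|\widehat{A}(\gamma)|^2|\widehat{B}(\gamma)|^2\le\frac{\theta^2|A|^2}{p^4}\cdot p|B|=\theta^2\a^2\b.
\]

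\emph{The exceptional set --- the main obstacle.} Let $\sigma:=\delta/3$ and $Y:=\{x:|h(x)|>\sigma\}$. Markov's inequality applied to the $L^2$ bound above already gives $|Y|\le 9p\theta^2\a^2\b/\delta^2$, and choosing $\theta^2\asymp\xi\delta^2/(\a^2\b)$ makes this $\le\xi p$; but then $d=|\Lambda|\asymp(\a\b/\delta^2)\cdot\xi^{-1}$, which is polynomial, not logarithmic, in $\xi^{-1}$. Recovering the logarithmic dependence claimed in the statement requires a genuine large-deviation estimate $|Y|\ll p\exp\bigl(-c\,\sigma^2\big/\tfrac1p\sum_x|h(x)|^2\bigr)$ for the minor-arc part; this is the crux of the argument and is exactly where the quantitative shapes of $\eps$ and $d$ are fixed. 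I would obtain it by amplifying the spectral step: either iterate the major/minor decomposition, peeling off the successive residual large spectra over $\asymp\log(1/\xi)$ rounds, each round contributing $\ll\a\b/\delta^2$ new frequencies and halving the measure of the bad set; or run a Croot--Sisask-type almost-periodicity argument on $h=\tfrac1p A'*B$, exploiting that $|\widehat{h}(\gamma)|\le\theta|A|\,|\widehat{B}(\gamma)|/p$ uniformly. Either way one may take $\theta\asymp\delta\big/\bigl(\a\sqrt{\b}\,\sqrt{\log(1/\xi)}\bigr)$, capped at $1$, yielding $d=|\Lambda|\ll(\a\b/\delta^2)\log(1/\xi)$ and $|Y|\le\xi p$.

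\emph{Assembling the wrapper.} Set $W:=\{x:g_\Lambda(x)\in[\eta_1-\sigma,\eta_2+\sigma]\}$ (or the wrapper that this band of $g_\Lambda$ produces under the construction of~\cite{Semchankau_wrappers}, throwing any further discrepancy of size $o(p)\le\tfrac12\xi p$ into $Y$). Since $g_\Lambda$ factors through the $d$ linear phases $(e(\gamma x))_{\gamma\in\Lambda}$ and has $\wiener{g_\Lambda}\le\sqrt{\a\b}$ --- so its range has diameter $\ll\sqrt{\a\b}$ and the admissible band has relative thickness $\asymp\sigma/\sqrt{\a\b}\asymp\delta/\sqrt{\a\b}$ --- the wrapper machinery of~\cite{Semchankau_wrappers} (concretely: approximate the band's indicator by a trigonometric polynomial of degree $O(\eps^{-1}\log p)$ in each circle variable and multiply Wiener norms via \eqref{f:Wiener_mult}) exhibits $W$ as a wrapper of dimension $\le d$ and granularity $\eps\gg\delta/\sqrt{\a\b}$ (and $\eps=1$, $d=1$ in the degenerate regime), matching $\wiener{W}\le(C\log p/\eps)^d$. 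Finally, for $x\notin Y$ we have $|g(x)-g_\Lambda(x)|=|h(x)|\le\sigma$: then $g(x)\in[\eta_1,\eta_2]$ forces $g_\Lambda(x)\in[\eta_1-\sigma,\eta_2+\sigma]$, i.e.\ $X\setminus Y\subseteq W$; and $g_\Lambda(x)\in[\eta_1-\sigma,\eta_2+\sigma]$ forces $g(x)\in[\eta_1-2\sigma,\eta_2+2\sigma]\subseteq[\eta_1-\delta,\eta_2+\delta]$ since $2\sigma\le\delta$, i.e.\ $W\setminus Y\subseteq X^{+}$. This is precisely the asserted conclusion, with the stated bounds on $\eps$, $d$, and $|Y|$.
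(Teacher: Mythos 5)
The paper itself does not prove this lemma; it is imported verbatim from \cite{Semchankau_wrappers}, so there is no in-paper proof to compare against. Judged on its own terms, your framework --- form $g=\tfrac1p(A*B)$, observe $\wiener{g}\le\sqrt{\a\b}$, split off a major-arc part $g_\Lambda$ supported on the $|\Lambda|\le 1/(\theta^2\a)$ large frequencies of $A$, bound $\|g-g_\Lambda\|_2^2\le\theta^2\a^2\b p$, and threshold $g_\Lambda$ to build the Bohr-style wrapper --- is the right shape, and the Parseval/Cauchy--Schwarz computations you perform are correct.

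The gap is exactly where you flag it, and it is a genuine one, not a routine verification you have merely deferred. Markov on $\|g-g_\Lambda\|_2^2$ gives $|Y|\ll p\theta^2\a^2\b/\delta^2$, which forces $\theta^2\ll\xi\delta^2/(\a^2\b)$ and hence $d=|\Lambda|\gg(\a\b/\delta^2)\,\xi^{-1}$ --- polynomial, not logarithmic, in $\xi^{-1}$. You propose to recover the $\log(1/\xi)$ by ``iterating the major/minor decomposition, each round contributing $\ll\a\b/\delta^2$ new frequencies and halving the measure of the bad set,'' but this does not work as stated: once the large spectrum at threshold $\theta$ has been removed, the residual $h$ has \emph{no} remaining frequencies above $\theta|A|$, so re-running the same peeling step adds nothing, and lowering the threshold just reproduces the original count $1/(\theta'^2\a)$. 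The measure of $\{|h|>\sigma\}$ does not ``halve'' under this operation; you would need a mechanism that makes the $L^2$ mass of the residual (or at least its contribution near the bad set) shrink geometrically, and you have not supplied one. The Croot--Sisask suggestion is more promising --- almost-periodicity of $A*B$ in $L^m$ norms, with $m\asymp\log(1/\xi)$, is the standard device for turning polynomial into logarithmic dependence on the exceptional density --- but you invoke it in one clause without stating which $L^m$ is used, how the almost-period set becomes the frequency set $\Lambda$, or how the resulting error lands inside $Y$. Since this is precisely the step that fixes the claimed shapes of $\eps$ and $d$, the lemma is not proved.

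A secondary concern: your wrapper $W=\{x:g_\Lambda(x)\in[\eta_1-\sigma,\eta_2+\sigma]\}$ is the preimage under the Bohr map $x\mapsto(\gamma x/p)_{\gamma\in\Lambda}\in\mathbb T^d$ of a \emph{slab} $\bigl\{z:\operatorname{Re}\sum_\gamma c_\gamma z_\gamma\in[\eta_1-\sigma,\eta_2+\sigma]\bigr\}$, not of a box. The parenthetical ``approximate the band's indicator by a trigonometric polynomial of degree $O(\eps^{-1}\log p)$ in each circle variable and multiply Wiener norms'' is the recipe for a box, and does not directly apply; one must either round the slab to a union of boxes (and account for the resulting error in $Y$), or approximate the one-dimensional indicator $\mathbf 1_{[\eta_1-\sigma,\eta_2+\sigma]}$ by a polynomial in the single real variable $g_\Lambda(x)$ and use $\wiener{g_\Lambda^j}\le\wiener{g_\Lambda}^j$ via \eqref{f:Wiener_mult}, which yields a bound of a different shape. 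Either route needs to be carried out to confirm that $W$ really is a wrapper with the stated granularity $\eps\gg\delta/\sqrt{\a\b}$ and dimension $d=|\Lambda|$.
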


Now we are ready to prove Proposition \ref{proposition:F-closed}:
\begin{proof}
We will prove the properties one by one.
\begin{itemize}
    \item[(1)] Let $W_1, \ldots, W_k$ be the corresponding approximations. Let $P := P_1 \cup \ldots \cup P_k$ and $W := W_1 \cup \ldots \cup W_k$. We claim that $W$ approximates $P$. Indeed,
    \[
    |P\ \triangle\ W| 
    \oversetgap{\text{Proposition~\ref{proposition:set-triangles}}}{\leqslant} \sum_i |P_i\ \triangle\ W_i| = o(p),
    \]
    and 
    \[
    W \oversetgap{\text{Venn Diagram}}{=}
    \bigsqcup_{\{1, \ldots, k\} = I \sqcup J} \left( \bigcap_{i \in I} W_i \setminus \bigcap_{j \in J} W_j\right),
    \]
    implying that $\wiener{W} \leqslant (Cp^{o(1)})^k = p^{o(1)}$ for some absolute $C > 0$, since intersections, complements, and disjoint unions of sets with a bounded Wiener norm also have a bounded Wiener norm.

    \item[(2)] Let $W$ be the set approximating $P$. 
    Here we claim that $X + P$ is approximated by $X + W$. 
    Application of the second part of Proposition~\ref{proposition:set-triangles} with
    \[
        T_x := x + P, \qquad  U_x := x + W, \qquad x \in X,
    \]
    yields
    \[
     |(X + P)\ \triangle\ (X + W)| \le |X||P\ \triangle\ W| = o(p).
    \]
    Similarly to the previous point, we can write
    \[
    X + W = \bigsqcup_{X = Y \sqcup Z}
            \left(
            \bigcap_{y \in Y} (y + W)
            \setminus
            \bigcap_{z \in Z} (z + W)
            \right),
    \]
    and conclude that $X + W$ also has a bounded Wiener norm. 

    \item[(3)] Denote the set $\{x: (A \circ A)(x) \in [2\eps\alpha^2 p, p]\}$ by $D$, so that 
    \[
         A -_{\varepsilon\alpha^2} A \subseteq D \subseteq A - A.
    \]
    Let us choose a reasonably small parameter $\xi \in (0, 1)$. Application of the Supplementary Lemma to the correlation $A\circ A$ with parameters
    \[
        \eta_1 = \varepsilon \alpha^2, 
        \qquad
        \delta = \varepsilon \alpha^2/2,
        \qquad
        \eta_2 = 1, 
        \qquad 
        \eps = \eps, 
        \qquad 
        \xi = \xi
    \]
    gives a wrapper $W$ such that
    \[
        \wiener{W} \le (c \log p)^{d}, 
        \qquad
        |D\ \triangle\ W| \le \xi p,
    \]
    where
    \[
        c = c(\alpha, \varepsilon) \ll \frac{1}{\varepsilon \alpha},
        \qquad
        d = d(\alpha, \varepsilon, \xi)
            \ll \frac{\log(1/\xi)}{(\varepsilon \alpha)^2} \,.
    \]
    By choosing $\xi$ to be sufficiently small (but not too small) we can guarantee that $\wiener{W} = p^{o(1)}$ and $|D\ \triangle\ W| = o(p)$, so $W$ approximates $D$.
\end{itemize}
\end{proof}

\section{Proof of the Lower Bound}
\label{sec:lower-bound}

We begin this section by recalling the following result from our work~\cite{Semchankau_wrappers}.

\begin{namedlemma}[Wrapping]\label{lemma:qual-wrapping}
Let $k \geqslant 3$, and let dense sets $A_1, A_2, \ldots, A_k \subset \F_p$ be such that there are $o(p^{k-1})$ solutions to the equation
\[
    a_1 + \cdots + a_k = 0.
\]
Then there exist wrappers $W_1, \ldots, W_k$, and exceptional sets $Y_1, \ldots, Y_k \subset \F_p$ such that
\begin{enumerate}
    \item The equation $x_1 + \cdots + x_k = b$ has $o(p^{k-1})$ solutions in $x_i \in W_i$ for some $b \in \F_p$;
    
    \item $A_i \setminus Y_i \subseteq W_i$ and $|Y_i| = o(p)$ for all $i$;
    
    \item $\wiener{W_i} = p^{o(1)}$ for all $i$.
\end{enumerate}
\end{namedlemma}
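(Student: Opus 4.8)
The plan is to construct the wrappers in the order $W_k, W_{k-1}, \dots, W_1$, replacing one original set $A_i$ by a wrapper at each step while keeping a ``few solutions'' property intact. Precisely, I will maintain the following hypothesis for $i = k, k-1, \dots, 1$: there are already wrappers $W_{i+1}, \dots, W_k$ with $\wiener{W_j} = p^{o(1)}$ and exceptional sets $Y_{i+1}, \dots, Y_k$ of size $o(p)$ with $A_j \setminus Y_j \subseteq W_j$, such that the equation $a_1 + \dots + a_i + w_{i+1} + \dots + w_k = 0$, with $a_\ell \in A_\ell$ and $w_\ell \in W_\ell$, has $o(p^{k-1})$ solutions. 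For $i = k$ this is exactly the hypothesis of the lemma (with empty lists of $W_j$'s and $Y_j$'s), and at $i = 1$ it gives conclusion (1) with the specific value $b = 0$, which is stronger than required.

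For the inductive step, put $g_i := A_1 * \dots * A_{i-1} * W_{i+1} * \dots * W_k$, a $(k-1)$-fold convolution of dense sets; the assumption $k \ge 3$ ensures it has at least two factors and is therefore genuinely smoothed (for $k = 2$ the scheme degenerates, since $g_2 = A_1$ carries no smoothing). The number of solutions in the displayed equation equals $\sum_x g_i(-x)A_i(x)$, and since $0 \le g_i \le p^{k-2}$ pointwise, the level set $L_i := \{x : g_i(-x) < \delta p^{k-2}\}$, for a threshold $\delta = \delta(p) \to 0$ chosen below, satisfies $|A_i \setminus L_i| \le o(p^{k-1})/(\delta p^{k-2}) = o(p)$ as long as $\delta$ decays slower than the implicit rate in the inductive hypothesis. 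Applying the approximation of level sets of convolutions by wrappers --- the Supplementary Lemma itself when $k = 3$, where $g_i$ is a two-fold convolution of dense sets, and its $(k-1)$-fold analogue from \cite{Semchankau_wrappers} for larger $k$ --- with an exceptional parameter $\xi = \xi(p) \to 0$, we obtain a wrapper $W_i$ and a set $Z_i$ with $|Z_i| \le \xi p$ such that $L_i \setminus Z_i \subseteq W_i \subseteq L_i^{+} \cup Z_i$, where $L_i^{+} := \{x : g_i(-x) < \delta^{+} p^{k-2}\}$ with $\delta^{+} = \delta^{+}(p) \to 0$, and $\wiener{W_i} \le (C \log p / \varepsilon)^{d}$ with $\varepsilon \asymp \delta$, $d \ll \delta^{-2}\log(1/\xi)$. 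Setting $Y_i := (A_i \setminus L_i) \cup Z_i$ gives $|Y_i| = o(p)$ and $A_i \setminus Y_i \subseteq L_i \setminus Z_i \subseteq W_i$; and the ``few solutions'' property survives the replacement of $A_i$ by $W_i$ because, using $W_i \subseteq L_i^{+} \cup Z_i$,
\[
\sum_{x \in W_i} g_i(-x) \le |W_i|\, \delta^{+} p^{k-2} + |Z_i|\, p^{k-2} \le (\delta^{+} + \xi)\, p^{k-1} = o(p^{k-1}),
\]
which closes the induction and yields all three conclusions once $i = 1$ is reached.

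It remains to fix the parameters. As the recursion has only $k = O(1)$ steps, it suffices to let $\delta, \delta^{+}, \xi$ tend to $0$ slowly --- say with $\delta$ just above $1/\log\log p$ and above the square root of the worst of the finitely many implicit rates, and $\xi \asymp \delta$; then every exceptional set has size $o(p)$, while $(C\log p/\varepsilon)^{d} = (C\log p)^{O(\delta^{-2}\log(1/\delta))} = p^{o(1)}$ since $\delta^{-2}(\log(1/\delta))^2 = o(\log p)$. The one genuinely nontrivial ingredient is thus the wrapper approximation for level sets of $(k-1)$-fold convolutions: for $k = 3$ it is precisely the Supplementary Lemma recalled above, and the main obstacle for $k \ge 4$ is having its higher-order counterpart available --- which the wrapper machinery of \cite{Semchankau_wrappers} supplies --- after which the remaining work is only the bookkeeping of choosing one slowly-decaying family of parameters valid simultaneously across all $O(k)$ stages.
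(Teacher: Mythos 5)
The paper itself does not prove the Wrapping Lemma: the sentence immediately preceding the statement explicitly says the lemma is ``recalling the following result from our work \cite{Semchankau_wrappers},'' so there is no in-paper proof to compare against. What you propose is the natural derivation of the Wrapping Lemma from the Supplementary Lemma (the one wrapper tool from \cite{Semchankau_wrappers} that this paper does reproduce), and on its own terms it is essentially sound. The one-at-a-time replacement, the level set $L_i=\{x : g_i(-x)<\delta p^{k-2}\}$ with $g_i=A_1*\cdots*A_{i-1}*W_{i+1}*\cdots*W_k$, the exceptional-set bound $|A_i\setminus L_i|\le o(p^{k-1})/(\delta p^{k-2})$, and the verification $\sum_{x\in W_i}g_i(-x)\le(\delta^++\xi)p^{k-1}$ via $W_i\subseteq L_i^+\cup Z_i$ and $\|g_i\|_\infty\le p^{k-2}$ are all correct, and the conclusion with $b=0$ is indeed stronger than asked.

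Two points are worth pinning down more firmly. First, as you flag, the Supplementary Lemma as quoted only approximates level sets of a \emph{two-fold} convolution $A*B$ of sets; for $k\ge 4$ every $g_i$ in your induction is already a $(k-1)\ge 3$-fold convolution, so you need the multi-fold analogue (plausibly in \cite{Semchankau_wrappers}, but not verifiable from the present paper). This is the only load-bearing dependency. Second, your closing parameter discussion reads as if a single $\delta$ works across all $k$ stages, but it cannot: the solution-count rate emitted at step $i$ is $\asymp\delta_i^++\xi_i\asymp\delta_i$, so the next threshold must satisfy $\delta_{i-1}\gg\delta_i$, otherwise $|A_{i-1}\setminus L_{i-1}|=O(p)$ rather than $o(p)$. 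One must use a strictly nested chain, e.g.\ $\delta_i=\eta^{(k+1-i)/(k+1)}$ with $\eta=\max(\eps_0,1/\log\log p)$, where $\eps_0$ is the hypothesis rate; then the smallest threshold $\delta_k$ still satisfies $\delta_k\gg\eps_0$ and $\delta_k^{-2}\log(1/\xi_k)\log(\log p/\delta_k)=o(\log p)$, so the Wiener bounds are $p^{o(1)}$. Since there are $O(1)$ stages this is routine, but your phrase ``above the square root of the worst of the finitely many implicit rates'' masks the fact that those rates are themselves defined by the $\delta$'s and must be unwound explicitly.
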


The first property implies that
\[
    |W_1| + \cdots + |W_k| < p + o(p),
\]
see~\cite{Semchankau_wrappers} for details.

\medskip

We are now ready to prove Theorem~\ref{thm:lower-bound}.

\begin{proof}
Let $B$ denote the complement of the sumset 
$A^{r_1} + \cdots + A^{r_k}$.  
If $|B| = o(p)$, the result is immediate, so assume $|B| \gg p$ and set $\alpha := |A| / p, \beta := |B| / p$.

Applying the Wrapping Lemma to the sets 
$A^{r_1}, \ldots, A^{r_k}, B$, 
we obtain wrappers 
$W_1, \ldots, W_k, W_B$
and exceptional sets 
$Y_1, \ldots, Y_k, Y_B$ with $|Y_i| = o(p)$ 
and 
$A^{r_i} \setminus Y_i \subseteq W_i$.
Denote wrapper densities by $\omega_i := |W_i| / p$.

By removing at most $o(p)$ elements from $A$, we obtain a subset $A_0 \subseteq A$ of density $1 - o(1)$ such that
\[
    A_0^{r_i} \subseteq W_i 
    \quad\text{for all } i,
    \qquad\text{equivalently,}\qquad
    A_0 \subseteq 
    \sqrt[r_1]{W_1} \cap \cdots \cap \sqrt[r_k]{W_k}.
\]

By Lemma~\ref{lemma:aip},
\[
    |A_0| \leqslant 
    \bigl|\sqrt[r_1]{W_1} \cap \cdots \cap \sqrt[r_k]{W_k}\bigr|
    = \frac{1}{p^{k-1}}\,|W_1|\cdots|W_k| + o(p).
\]
Hence
\[
    \alpha \le \omega_1 \cdots \omega_k + o(1).
\]

On the other hand, from $\sum_i |W_i| < p + o(p)$ we deduce
\[
    \omega_B 
    \le 1 - (\omega_1 + \cdots + \omega_k) + o(1)
    \le 1 - k(\omega_1 \cdots \omega_k)^{1/k} + o(1)
    \le 1 - k \alpha^{1/k} + o(1).
\]

Finally, since $B \setminus Y_B \subseteq W_B$, we have
\[
    \beta \le \omega_B + o(1),
\]
and therefore
\[
    \beta \le 1 - k \alpha^{1/k} + o(1),
\]
as claimed.
\end{proof}

\section{Proof of the 99\% Theorem} 
\label{sec:99-percent}

We begin by recalling the well–known result of Green~\cite{Green_regularity}.

\begin{authortheorem}[Green, 2005]\label{thm:green}
Let $k \geqslant 3$, and let dense sets $A_1, A_2, \ldots, A_k \subseteq \F_p$ be such that there are $o(p^{k-1})$ solutions to the equation
\[
    a_1 + \cdots + a_k = 0.
\]
Then we may remove $o(p)$ elements from each $A_i$, leaving subsets $A_i' \subseteq A_i$ with the property that the equation
\[
    a_1' + \cdots + a_k' = 0, 
    \qquad a_i' \in A_i',
\]
has no solutions.
\end{authortheorem}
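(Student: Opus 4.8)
The statement is a qualitative \emph{removal lemma} for the single linear equation $a_1+\cdots+a_k=0$, and the plan is to deduce it from the graph removal lemma in the case $k=3$ and from Green's arithmetic regularity lemma in general. I would treat $k=3$ first, since there the argument is completely clean.

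\emph{The case $k=3$ (Ruzsa--Szemer\'edi reduction).} Build a tripartite graph $G$ on $V_1\sqcup V_2\sqcup V_3$, each $V_i$ a copy of $\F_p$, with $\{v_1,v_2\}$ an edge iff $v_2-v_1\in A_1$, $\{v_2,v_3\}$ an edge iff $v_3-v_2\in A_2$, and $\{v_3,v_1\}$ an edge iff $v_1-v_3\in A_3$. A triangle $(v_1,v_2,v_3)$ yields $(v_2-v_1)+(v_3-v_2)+(v_1-v_3)=0$ with the three summands in $A_1,A_2,A_3$; conversely each solution $(a_1,a_2,a_3)$ produces exactly $p$ triangles, one per $v_1\in\F_p$ (its orbit under the diagonal translation action of $\F_p$ on $G$). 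Hence $G$ has $3p$ vertices and $pN=o(p^3)$ triangles, so the triangle removal lemma gives $E'\subseteq E(G)$ with $|E'|=o(p^2)$ such that $G-E'$ is triangle-free.

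\emph{Translating the removed edges back.} For $a\in\F_p$ let $M_i^{(a)}$ be the perfect matching of the $p$ edges "of difference $a$" between $V_i$ and $V_{i+1}$ (indices cyclic); each edge lies in exactly one such matching, translation preserves matchings, and $v\mapsto$ (the $M_i^{(a)}$-edge at $v$) is a bijection onto $M_i^{(a)}$. Fix a solution $(a_1,a_2,a_3)$. Each of its $p$ triangles loses an edge in $E'$, so summing over the $p$ base points (a union bound) gives
\[
p \;\le\; |E'\cap M_1^{(a_1)}| + |E'\cap M_2^{(a_2)}| + |E'\cap M_3^{(a_3)}|.
\]
Set $\mathcal{E}_i:=\{\,a\in A_i : |E'\cap M_i^{(a)}|\ge p/3\,\}$; since $\sum_a|E'\cap M_i^{(a)}|\le|E'|$ we get $|\mathcal{E}_i|\le 3|E'|/p=o(p)$. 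Put $A_i':=A_i\setminus\mathcal{E}_i$. If $(a_1',a_2',a_3')$ were a solution with $a_i'\in A_i'$, every term above would be $<p/3$, contradicting the display; so $A_1',A_2',A_3'$ have no solution, and only $o(p)$ elements were deleted from each.

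\emph{General $k$ and the main obstacle.} For $k\ge 4$ the naive reduction to $k$-cycles fails: degenerate cycles revisiting two classes already number $\Theta(p^k)$ and swamp the count. I would instead follow Green and use the arithmetic regularity lemma over $\Z/p\Z$: partition $\F_p$ into a bounded number of translates of a Bohr set $B(\Lambda,\rho)$ with $|\Lambda|=O_\delta(1)$, relative to which each $A_i$ is a union of cells that are either negligibly small or carry a well-defined density and are Fourier-uniform along the partition. Expanding $N=\frac1p\sum_{\xi}\widehat{A_1}(\xi)\cdots\widehat{A_k}(\xi)$ over cells yields $N=(\text{main term built from the cell densities})+o(p^{k-1})$, the main term being $p^{k-1}$ times a weighted count of cell-tuples $(C_1,\dots,C_k)$ with $0\in C_1+\cdots+C_k$; since $N=o(p^{k-1})$ this weighted count is $o(1)$, so deleting from each $A_i$ the $o(p)$ elements lying in small cells, or in a cell that occurs with non-negligible weight in such a tuple, leaves sets $A_i'$ in which any surviving solution would force a compatible cell-tuple of non-negligible total density --- impossible. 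The substantive difficulty is the regularity lemma itself: $\Z/p\Z$ has no subgroups, so Bohr sets must play that role, which makes both the energy-increment iteration and the control of cell boundaries delicate, and it is this that makes the bounds purely qualitative (tower-type in $1/\delta$), matching the "$o(p)$" in the statement rather than a polynomial loss. (Alternatively one could replace the cycle graph by the $K_k$-construction of Kr\'al--Serra--Vena and then push the exceptional edge-data back to exceptional elements of the individual $A_i$, but that last bookkeeping step is exactly where the extra care is needed.)
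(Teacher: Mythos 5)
The paper does not prove this theorem --- it is invoked as a black box and cited to Green \cite{Green_regularity}, so there is no internal proof to compare your attempt against; I can only assess the attempt on its own terms and against Green's actual argument. Your $k=3$ proof via triangle removal is correct and complete: the count of $pN=o(p^3)$ triangles is right (a solution $(a_1,a_2,a_3)$ produces exactly $p$ triangles, one per choice of $v_1$), the partition of the edge set into the $3p$ matchings $M_i^{(a)}$ is well defined, and the key inequality $|E'\cap M_1^{(a_1)}|+|E'\cap M_2^{(a_2)}|+|E'\cap M_3^{(a_3)}|\ge p$ is justified because the $p$ triangles of a fixed solution use pairwise distinct edges within each matching, so each removed edge is charged to at most one of them; the Markov step $|\mathcal{E}_i|\le 3|E'|/p=o(p)$ then finishes. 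This is the Ruzsa--Szemer\'edi reduction and is a genuinely different route from the one the paper relies on: it trades Green's arithmetic regularity lemma for the graph removal lemma, which is of comparable (tower-type) strength but avoids Bohr sets entirely and is more self-contained at $k=3$. For $k\ge4$ you correctly diagnose why naive $k$-cycle removal fails (degenerate cycles between two parts already number $\Theta(p^k)$), and your outline of Green's arithmetic-regularity proof has the right shape --- Bohr-set partition, discard small and non-uniform cells, a counting lemma reducing $N$ to a weighted cell-tuple count up to $o(p^{k-1})$ error, then delete elements sitting in cells that occur with non-negligible weight in a zero-sum tuple --- but it remains a sketch: neither the counting lemma nor the threshold argument that converts a small weighted tuple-count into an $o(p)$-element removal per $A_i$ is actually established (each needs an analogue of your $k=3$ Markov step, together with Fourier control of the restricted sets along the Bohr partition). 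The Kr\'al--Serra--Vena $K_k$-reduction you mention parenthetically would close both gaps at once and unify the two cases, at the cost of invoking graph removal for $K_k$ rather than $K_3$.
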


\medskip

\noindent
We are now ready to prove Theorem~\ref{thm:99-percent}, which would follow from a combination of the Wrapping Lemma, Green’s theorem, and the basic properties of wrappers. The proof begins where the proof of Theorem \ref{thm:lower-bound} ends.

\medskip

\begin{proof}
Let the sets $B, A_0, W_i, Y_i$ ($i = 1, \ldots, k$) be as defined in the proof of Theorem~\ref{thm:lower-bound}.  
By the first property guaranteed by the Wrapping Lemma, the equation
\[
    x_1 + \cdots + x_k = x_B, 
    \qquad x_i \in W_i, \ x_B \in W_B - b,
\]
has only $o(p^{k-1})$ solutions for some $b \in \F_p$.  
By Green’s theorem, we may remove $o(p)$ elements from each of 
$W_1, \ldots, W_k, (W_B - b)$ 
to obtain sets 
$P_1, \ldots, P_k, P_B$ (absorbing the shift into notation)
such that the equation
\[
    x_1 + \cdots + x_k = x_B
\]
has no solutions with $x_i \in P_i$ and $x_B \in P_B$.

\medskip

In particular, the sumset $P_1 + \cdots + P_k$ is disjoint from $P_B$.  
Using this and $|B| \le (1 + o(1))|W_B|$, we get
\begin{align*}
    |P_1 + \cdots + P_k|
    &\le p - |P_B| =\\
    &= p - (1 + o(1))|W_B| \le \\
    &\le p - (1 + o(1))|B| = (1 + o(1))\, K\, p^{\frac{k-1}{k}} |A|^{1/k}.
\end{align*}

\medskip

Next, since $P_i$'s are approximated by $W_i$, we have
\[
\left|\sqrt[r_1]{P_1} \cap \cdots \cap \sqrt[r_k]{P_k}\right|
\oversetgap{\text{Proposition~\ref{proposition:aip}}}{=}
\frac{1}{p^{k-1}}\,|P_1|\cdots|P_k| + o(p).
\]

Therefore, we obtain the desired result for $A_0 \subseteq A$ (resulted from removing at most $o(p)$ elements) with $P_i, i = 1, \ldots, k$.
\end{proof}

\section{On an additive  distance between sets}
\label{sec:new_norm}

In this section we discuss a new distance between additive sets and obtain some applications of these concepts to our problem about $A+A^*$ in the second part.

\subsection{Additive distance}
\label{ssec:new_normI}

\begin{definition}
\label{def:norm}
    Let $f,g : \Gr \to \C$ be functions. 
    We write 
\begin{equation}\label{def:rho}
    \rho^2 (f,g) = \| f \circ  f - g \circ  g\|_\infty 
    \quad \quad
    \mbox{ and }
    \quad \quad
    \rho^2_* (f,g) = \max_{x\neq 0} | (f \circ  f) (x) - (g \circ  g) (x)| \,.
\end{equation}
    We  put $\rho(f)$ for $\rho (f,0) = \| f \circ  f\|^{1/2}_\infty$, and similarly for $\rho_*(f)$. 
\end{definition}


Let us make 
some simple remarks concerning the definition above and compare $\rho(f,g)$ with some standard metrics. 
Other relations are discussed in the appendix. 
First of all, it is easy to see that $\rho(\cdot, \cdot)$, $\rho_* (\cdot, \cdot)$ are  pseudometrics (for example, one has $\rho(f,g) = \rho(f+c,g+c)$ for any $c$ and arbitrary functions $f,g$ with zero mean).
Let us remark that  $\| f\|_2 \le \rho(f)$. 
Also, if $\d:= N^{-1} \sum_x f(x)$, then 
$\rho(f,\d) = \rho(f- \d)$. 
Finally,  there is a simple relation between 
the quantity $\rho (f)$
and Fourier transform. 
Indeed, 
using 
\eqref{f:Fourier_representations}
one has 
\begin{equation}\label{f:Fourier_*}
    | \FF{f} (\xi)|^2 = | \sum_{x} (f \circ f) (x) \ov{\xi (x)} | \le \rho^2(f) N 
\end{equation}
and hence $\| \FF{f}\|_\infty \le \rho(f) \sqrt{N}$.


\subsection{The family $\mathcal{W}$}
\label{ssec:new_normII}

In this subsection, we study the family of sets $\mathcal{W}$ (not to be  confused with the family in Proposition \ref{proposition:comparable-calculus}), which will play a central role in the next section. In general terms, the elements of this family can be viewed as intersections of polynomial images of arithmetic progressions.

\begin{definition}
\label{def:family_W}
Let $p$ be a prime number, $k$ a positive integer, and $M \geqslant 1$ a real number. 
A function $f : \F_p \to \C$ belongs to the family $\mathcal{W}(k, M)$ if it has the form
\[
    f(x) = f_{S_0, \dots, S_k, r_1, \ldots, r_k}(x)
    = S_0(x) \prod_{i=1}^k S_i(x^{r_i}),
\]
where the sets $S_i$ satisfy $\wiener{S_i} \le M$ for all $i = 0, 1, \dots, k$, and the tuple $(1, r_1, \ldots, r_k)$ is coprime. Again, sets $S_i$'s are identified with their characteristic functions.
We write $S_j(f) = S_j$, further 
\[
    \d_i(f) := p^{-1} \sum_x S_i(x), \quad i = 1, \ldots, k \,,
\]
and set $\d_f := \prod_{i=1}^k \d_i(f)$.
\end{definition}
Note that by the multiplicative property of the Wiener norm~\eqref{f:Wiener_mult}, we have
\[
    \mathcal{W}(n, M) \cdot \mathcal{W}(m, M) \subseteq \mathcal{W}(n + m, M^2).
\]

We now state the main lemma regarding the family $\mathcal{W}(k, M)$. 
It shows that any function $f \in \mathcal{W}(k, M)$ is close, in the $\rho(\cdot, \cdot)$-distance, to its \emph{mean component} $\d_f \cdot S_0(f)$.

\begin{lemma}
\label{lemma:convolutions}
Let 
\[
    f = f_{S_0, \ldots, S_n, r_1, \ldots, r_k} \in \mathcal{W}(k, M),
\]
where the tuple $(1, r_1, \ldots, r_k)$ is coprime and bounded.
Then 
\begin{equation}
    \rho_* (f, \d_f S_0(f)) \ll M^{2k + 2} \sqrt{p}.
\end{equation}
\end{lemma}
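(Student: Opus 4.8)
The plan is to expand $\rho_*^2(f, \delta_f S_0)$ via the definition of the correlation and reduce everything to estimating, for each nonzero shift $h$, the quantity $(f \circ f)(h) - \delta_f^2 (S_0 \circ S_0)(h)$. Writing $f(x) = S_0(x) \prod_{i=1}^k S_i(x^{r_i})$, we have
\[
(f \circ f)(h) = \sum_x S_0(x) S_0(x+h) \prod_{i=1}^k S_i(x^{r_i}) S_i((x+h)^{r_i}),
\]
so the task is to show that the "multiplicative" factors $\prod_i S_i(x^{r_i}) S_i((x+h)^{r_i})$ average (against the weight $S_0(x)S_0(x+h)$) to $\delta_f^2$ up to an error $O(M^{2k+2}\sqrt p)$. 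First I would use the Fourier inversion formula to write each indicator $S_i(y) = \frac1p\sum_{u} \widehat{S_i}(u) e(uy)$, expanding all $2k$ such factors (and the two copies of $S_0$, or alternatively keeping $S_0$ as a fixed weight — I'd keep $S_0(x)S_0(x+h)$ as is and treat it by bounding $\|S_0 \circ S_0\|_\infty \le |S_0| \le \|S_0\|_\omega^2$... actually better to also expand $S_0$ so the exponent becomes a clean polynomial in $x$). After expansion, the inner sum over $x$ becomes
\[
\sum_x e\!\left(u_0 x + v_0(x+h) + \sum_{i=1}^k \big(u_i x^{r_i} + v_i (x+h)^{r_i}\big)\right),
\]
which is exactly the exponential sum controlled by Corollary~\ref{corollary:exp-conv} (with $s = -h \ne 0$, after relabeling): provided the phase is non-constant, it is $O(\sqrt p)$.

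The main term comes from the frequency tuple where every $u_i = v_i = 0$ for $i = 1,\dots,k$ and $u_0 = -v_0$ (so the phase is constant $\equiv v_0 h$), but one must be careful: there can be further "degenerate" tuples where the polynomial $\sum_i(u_i x^{r_i} + v_i(x+h)^{r_i}) + (u_0+v_0)x$ collapses to a constant even though not all $u_i,v_i$ vanish. Here is where the coprimality and boundedness of $(1,r_1,\dots,r_k)$ enters: after scaling by $L$, all exponents lie in a bounded window, the curve $a^L - b^L = s$ is absolutely irreducible, and a non-constant Laurent polynomial in $x$ of bounded degree over $\F_p$ cannot be constant — so one checks that the only constant phases are the genuine main-term ones. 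I would isolate the main term as $\frac{1}{p^{2k}}\Big(\sum_{u_0=-v_0} \widehat{S_0}(u_0)\widehat{S_0}(v_0) e(v_0 h)\Big)\prod_{i=1}^k |S_i|^2 \cdot (\text{something})$; cleanly, collapsing the $S_i$ frequencies to $0$ gives the factor $\prod_i \delta_i(f)^2 p^{2k} = \delta_f^2 p^{2k}$, and collapsing back the two $S_0$-sums yields $\delta_f^2 (S_0\circ S_0)(h)$, which is precisely $\delta_f^2 S_0(x)$-weighted count we want to match. The error is the sum over all non-degenerate tuples of $\frac{1}{p^{2k+2}} \prod |\widehat{S_0}(u_0)\widehat{S_0}(v_0)| \prod|\widehat{S_i}(u_i)\widehat{S_i}(v_i)| \cdot O(\sqrt p)$; factoring the Fourier $\ell^1$ sums gives $\frac{1}{p^{2k+2}} \cdot (p\|S_0\|_\omega)^2 \prod_i (p\|S_i\|_\omega)^2 \cdot O(\sqrt p) \le \frac{1}{p^{2k+2}} \cdot p^{2k+2} M^{2k+2} \cdot O(\sqrt p) = O(M^{2k+2}\sqrt p)$.

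Taking the supremum over $h \ne 0$ gives $\rho_*^2(f, \delta_f S_0) \ll M^{2k+2}\sqrt p$. Actually I should double-check the power of $M$: we have $k+1$ sets $S_0,\dots,S_k$, each contributing two copies, so $\prod \|S_i\|_\omega^2$ over $i = 0,\dots,k$ is at most $M^{2(k+1)} = M^{2k+2}$ — consistent with the claimed bound. Note the statement is $\rho_*$ (shifts $h \ne 0$ only), which is essential: at $h = 0$ the phase is always constant regardless of the $u_i + v_i = 0$ condition, so Corollary~\ref{corollary:exp-conv} does not apply and one genuinely does not get cancellation there — this is why the diagonal term is excluded.

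The step I expect to be the main obstacle is the careful bookkeeping of degenerate frequency tuples: one must verify that, aside from the intended main term, every tuple $(u_0,v_0,u_1,v_1,\dots,u_k,v_k)$ for which the phase polynomial $(u_0+v_0)x + \sum_{i}(u_i x^{r_i} + v_i(x+h)^{r_i})$ is constant in $x$ actually forces $u_i = v_i = 0$ for $i \ge 1$ and $u_0 = -v_0$ (so that those tuples are precisely the main term, not an uncontrolled extra contribution). This requires using that $h \ne 0$ (so $x^{r_i}$ and $(x+h)^{r_i}$ are "independent" as functions after scaling) together with coprimality/boundedness to rule out accidental algebraic relations; it is essentially the content that makes Corollary~\ref{corollary:exp-conv}'s "not a constant" hypothesis verifiable in our setting. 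Everything else is routine Fourier expansion and $\ell^1$–$\ell^\infty$ estimation with the Wiener-norm bounds.
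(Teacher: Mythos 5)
Your proposal is correct and follows essentially the same route as the paper: expand each $S_i$ via Fourier inversion, isolate the zero-frequency main term $\delta_f^2 (S_0 \circ S_0)(s)$ (arising from $u_i = v_i = 0$ for $i \geqslant 1$ and $u_0 = -v_0$), and bound the remaining exponential sums by Corollary~\ref{corollary:exp-conv} together with the Wiener-norm $\ell^1$ bounds, which gives exactly the $M^{2k+2}\sqrt{p}$ you compute. The degenerate-tuple issue you flag as the ``main obstacle'' is dispatched in the paper in one line at the outset: coprimality to $p-1$ forces the $r_i$ to be odd and hence the exponents $1, r_1, \dots, r_k$ are $2$-separated, which makes $1, x, x^{r_1}, \dots, x^{r_k}, (x-s)^{r_1}, \dots, (x-s)^{r_k}$ linearly independent for every $s \neq 0$, so the only phase tuples that collapse to a constant are precisely the intended main-term ones.
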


\begin{proof}
Since all $r_i$ are coprime to $p - 1$, we conclude that they are odd, and therefore at least $2$-separated. It is an easy exercise then, that for all $s \neq 0$ the polynomials $1, x, x^{r_1}, \ldots, x^{r_k}, (x-s)^{r_1}, \ldots, (x - s)^{r_k}$ are linearly independent in $\F_p[x]$.

We write
\[
    f(x) = \prod_{i=0}^k S_i(x^{r_i})
\]
where $r_0 = 1$.
By the inverse formula~\eqref{f:inverse},
\[
    S_i(x^{r_i}) = p^{-1} \sum_{u_i} \widehat{S}_i(u_i) e(u_i x^{r_i}),
\]
hence
\[
    f(x) = p^{-(k+1)} \sum_{u_0, \dots, u_n}
        \Bigl( \prod_{i=0}^k \widehat{S}_i(u_i) \Bigr)
        e\left( \sum_{i=0}^k u_i x^{r_i} \right).
\]
Substituting these expressions into the formula of $(f \circ f)(s)$ for any $s \in \F^*_p$ gives
\begin{multline*}
    (f \circ f)(s) = 
    \sum_{x - y = s} f(x) f(y) = \\ =
    p^{- (2k + 2)} 
    \sum_{x - y = s}
    \sum_{u_0, \dots, u_k}
    \sum_{v_0, \dots, v_k}
    \prod_{i=0}^k \widehat{S}_i(u_i)
    \prod_{j=0}^k \widehat{S}_j(v_j) 
    \times
    e\left( 
        \sum_{i=0}^k u_i x^{r_i} 
        + \sum_{j=0}^k v_j y^{d_j}
    \right)  = \\ =
    p^{- (2k + 2)} 
    \sum_{u_0, \dots, u_k}
    \sum_{v_0, \dots, v_k}
    \prod_{i=0}^k \widehat{S}_i(u_i)
    \prod_{j=0}^k \widehat{S}_j(v_j) 
    \times
    \sum_x
    e\left( 
        \sum_{i=0}^k u_i x^{r_i} 
        + \sum_{j=0}^k v_j (x - s)^{r_j}
    \right).
\end{multline*}

The main term $\sigma_*(s)$ arises when $u_i = v_i = 0$ for $i \ge 1$.
In this case, the inner sum over $x$ becomes 
\[
e(-v_0s) \sum_x e\Bigl((u_0 + v_0)x\Bigr)
\]
and it is nonzero only when $u_0 = -v_0$.
Thus, using~\eqref{f:Fourier_representations},
\begin{multline*}
    \sigma_*(s)
    = p^{-(2k + 1)} 
    \prod_{i=1}^k \Bigl( \sum_z S_i(z) \Bigr)
    \prod_{j=1}^k \Bigl( \sum_w S_j(w) \Bigr)
    \sum_u \widehat{S}_0(u)\widehat{S}_0(-u) e(us) = \\
    = \d_f^2 \cdot (S_0 \circ S_0)(s)
    = \d_f^2 \cdot \bigl(S_0(f) \circ S_0(f)\bigr)(s).
\end{multline*}

Let $\mathcal{E}(s) := (f \circ f)(s) - \sigma_*(s)$ denote the error term. Then, expanding as before, we obtain
\[
    \mathcal{E}(s) =
    p^{ -(2k + 2)} 
    \sum_{u_0, \dots, u_k}
    \sum_{v_0, \dots, v_k}
    \prod_{i=0}^k \widehat{S}_i(u_i)
    \prod_{j=0}^k \widehat{S}_j(v_j) 
    \times
    \sum_x
    e\left( 
        \sum_{i=0}^k u_i x^{r_i} + \sum_{j=0}^k v_j (x - s)^{r_j}
    \right).
\]
where not all $u_i, v_j$ vanish for $i, j \ge 1$.
By Corollary \ref{corollary:exp-conv}, each exponential sum has order $O(\sqrt{p})$. Hence
\begin{multline*}
    \mathcal{E}(s)
    \ll
    p^{-(2k + 2)}
    \sum_{u_0, \dots, u_k}
    \sum_{v_0, \dots, v_k}
    \prod_{j=0}^k |\widehat{U}_j(u_j)|
    \prod_{i=0}^k |\widehat{V}_i(v_i)|
    \cdot 
    \sqrt{p} \ll 
    M^{2k + 2}\sqrt{p}.
\end{multline*}
This completes the proof.
\end{proof}

\begin{remark}
    This proof can be easily altered to approximate functions 
    \[f \circ g, \quad f * g,\] by 
    \[\d_f \d_g \cdot S_0(f) \circ S_0(g), \quad \d_f \d_g \cdot S_0(f) * S_0(g),\]
    respectively.
\end{remark}

At the end of this subsection, consider another family of sets which can be approximated by its subsets in the sense of distance $\rho(\cdot, \cdot)$.

\begin{remark}
    Consider the family $\mathcal{R}$ of functions $A_S (x) = S(x) R(x)$, where $R$ is the set of quadratic residues and $\wiener{S} \le M$.
    Using similar computations as in the proof of Lemma \ref{lemma:convolutions} and the Weil bound, one can show 
\[
    \| A_S * A_T - \d_{A_S} \d_{A_T} \cdot S * T \|_\infty \ll M^2 \sqrt{p} \,.
\]
    However, the family $\mathcal{R}$ is not invariant under additive shifts, and a complete analogue of the Lemma \ref{lemma:convolutions} for the products (consider the set $A_S (x) A_T (x+1)$, say) requires an analogue of the Weil estimate over some surfaces.
\end{remark}

\section{Sumsets with pseudorandom sets}
\label{sec:sumsets}
To motivate the main results of this section, let us imagine that we work with a sumset $A + A^*$.

From the proof of the \emph{99\% Theorem}, we know that there exist sets $W_1, W_2$ with Wiener norms of order $p^{o(1)}$, and subsets 
$P \subseteq W_1$, $Q \subseteq W_2, A_0 \subseteq A$ of relative densities $1 - o(1)$ such that
\begin{itemize}[itemsep=0.5em]
    \item $|P + Q| \leqslant (1 + o(1)) K \sqrt{p|A|}$;
    \item $|P \cap Q^*| = \frac{1+o(1)}{p}|P||Q|$;
    \item $A_0 \subseteq P \cap Q^*$.
\end{itemize}

Let $\kappa_1 = |P| / p$, $\kappa_2 = |Q| / p$, and density of $A_0$ in $P \cap Q^*$ be $\eta$.
The last two bulletpoints imply
\[
    \alpha \leqslant (1 + o(1)) \kappa_1 \kappa_2,
\]
and the first one gives
\[
    |P + Q| \leqslant (1 + o(1)) K \sqrt{\eta}   \sqrt{|P||Q|}.
\]
Since $|P + Q| \geqslant \max(|P|, |Q|)$, it follows that
\[
    \eta \gg 1 / K^2, 
    \qquad 
    \kappa_1, \kappa_2 \asymp_K \sqrt{\alpha}.
\]

From these relations and the inequality $|A + A^*| \leqslant K \sqrt{p|A|}$ we deduce
\[
    |A + A_0^*| \ll_K |P|.
\]
Intuitively, the set $P \cap Q^*$ should behave similarly to a random subset of $P$, and $A_0$ should behave similarly to a random subset of some $P_0 \subset P$ of density $\eta$. Hence, one might expect the approximation
\begin{equation}\label{eq:expectation}
    A^* + A_0 \approx A^* + P_0,
\end{equation}
to hold. If~\eqref{eq:expectation} were true, then
\[
    |A^* + P_0| \approx |A^* + A_0| \ll_K |P| \ll_K |P_0|
\]
would hold, and by the Ruzsa Covering Lemma one obtains
\[
    A^* \subseteq X_P + (P_0 - P_0),
    \qquad 
    |X_P| \ll_K 1.
\]
Applying the same argument to $A + A_0^*$ yields
\[
    A \subseteq X_Q + (Q_0 - Q_0),
\]
and therefore
\[
    A \subseteq (X_Q + (Q_0 - Q_0)) \cap (X_P + (P_0 - P_0))^*,
\]
an intersection of precisely the form we expect.

\medskip

Our goal 
now 
is to 
arrive  to 
a similar conclusion \emph{without} assuming~\eqref{eq:expectation}.
We will proceed as follows:
\begin{itemize}
    \item Although we cannot directly obtain the approximation $A^* + A_0 \approx A^* + P_0$, we can deduce its consequence $|A^* + A_0| \gg_K |P|$. 
    This is based on pseudorandom properties of the set $P \cap Q^* \subset P$. 

    \item The main idea is to treat $P$ as if it were an additive subgroup of $\F_p$, and to consider a ``quotient'' of $\F_p$ modulo $P$ and get almost-disjoint ``cosets''.
    Using the previous result, we show that
    \[
        |A^* + A_0| \gg_K |P| \cdot \#\{\text{cosets}\},
    \]
    which contradicts the upper bound $K^{O(1)} |P|$, if $A$ has large intersections with many such cosets.
    This part of the argument is purely combinatorial.
\end{itemize}

These ideas generalize to arbitrary bounded coprime exponent configurations at a small additional cost. 
That is, we will prove the following lemma:

\begin{lemma}
\label{lemma:X+T-growth-structure}
Let $W, W_1, \ldots, W_k \subseteq \F_p$ be sets with Wiener norms $p^{o(1)}$
and densities $\omega, \omega_1, \ldots, \omega_k$, respectively.
Set $\omega_\times := \omega_1 \cdots \omega_k$.
Let tuple $(r_1, \ldots, r_k)$ be coprime and bounded.

Let $P \subseteq W$ and $P_i \subseteq W_i$ $(i=1,\ldots,k)$ be subsets of relative density at least $1-\varepsilon$,
where $\varepsilon \ll \omega \omega_\times$.
Define
\[
    R := P \cap \sqrt[r_1]{P_1} \cap \cdots \cap \sqrt[r_k]{P_k},
\]
and let $T \subseteq R$ be a subset of relative density $\eta$.
Let $Y \subseteq \F_p$ be arbitrary.

\begin{enumerate}
    \item[\textup{(a)}] If $|Y| \gg \eta/\omega_{\times}$, then
    \[
        |Y+T| \gg \eta^2 |P|.
    \]

    \item[\textup{(b)}]
    Assume in addition that $|P - P| \le L|P|$.
    If
    \[
        |Y + T| \le K|P|,
    \]
    then there exist sets $X, E \subseteq \F_p$ such that
    \[
        Y \subseteq \bigl(X + (P -_{\kappa^2/2} P)\bigr) \sqcup E,
        \qquad
        |X| \ll \frac{KL^7}{\eta^2},
        \qquad
        |E| \ll \frac{1}{\omega \omega_\times},
    \]
    where $\kappa := |P|/p$.
\end{enumerate}
\end{lemma}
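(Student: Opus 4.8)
The plan is to treat parts (a) and (b) separately, with (a) serving as the engine for (b). For part (a), the core idea is a second-moment/Cauchy--Schwarz computation for the representation function $r_{Y+T}$, but the crucial input is that $T$ (a dense subset of the algebraic intersection $R = P \cap \sqrt[r_1]{P_1} \cap \cdots \cap \sqrt[r_k]{P_k}$) behaves pseudorandomly \emph{inside} $P$. Concretely, I would first pass from $P, P_i$ to their Wiener-norm approximants $W, W_i$ (the error from the relative-density losses $\varepsilon \ll \omega\omega_\times$ is absorbable) and then consider the function $f(x) = W(x)\prod_i W_i(x^{r_i})$, which lies in $\mathcal{W}(k,M)$ with $M = p^{o(1)}$. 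By Lemma~\ref{lemma:convolutions} (after adjusting to the shifted setting, as in its Remark), $f \circ f$ is within $O(M^{2k+2}\sqrt{p})$ of $\delta_f^2 \cdot (W \circ W)$; in particular the number of solutions to $t_1 - t_2 = s$ with $t_i \in R$ tracks $\omega_\times^2 (W\circ W)(s)$ up to a genuinely pseudorandom error. I would then bound $|Y+T|$ below by $\bigl(\sum_s r_{Y+T}(s)\bigr)^2 / \sum_s r_{Y+T}(s)^2 = |Y|^2|T|^2 / \sum_s r_{Y+T}(s)^2$, expand $\sum_s r_{Y+T}(s)^2 = \sum_{y_1,y_2 \in Y}(T \circ T)(y_1 - y_2)$, and substitute the approximation $(T\circ T)(u) \approx \eta^2 \omega_\times^2 \cdot (W \circ W)(u) + \text{pseudorandom error} \le \eta^2 |T|/|P| \cdot \text{(something)} + \dots$; the hypothesis $|Y| \gg \eta/\omega_\times$ is exactly what makes the pseudorandom error term $O(|Y|^2 p^{1-\eps'})$ negligible against the main term, yielding $|Y+T| \gg \eta^2 |P|$ after simplification.

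For part (b), the strategy is the ``quotient modulo $P$'' heuristic sketched in the section introduction, made rigorous. Assume $|P-P| \le L|P|$ and $|Y+T| \le K|P|$. I would apply a Ruzsa-covering-type argument: greedily select a maximal set $X \subseteq Y$ such that the translates $\{x + (P -_{\kappa^2/2} P)\}_{x \in X}$ are ``quasi-disjoint'' in an appropriate weighted sense — here $P -_{\kappa^2/2} P$ is the set of popular differences, which by Proposition~\ref{proposition:F-closed}(3) lies in $\mathcal{F}$, so it carries a bounded Wiener norm and satisfies the AIP, and moreover $|P -_{\kappa^2/2} P| \gg |P|$. For each such $x \in X$, part (a) applied to the singleton-ish translate (or rather to a suitable $Y' \ni x$) forces $|(x+T) \text{-contribution to } Y+T| \gg \eta^2|P|$; combined with near-disjointness this gives $|Y+T| \gg |X| \eta^2 |P|$, whence $|X| \ll K/\eta^2$. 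The powers of $L$ (the claimed $L^7$) enter through the repeated use of Plünnecke--Ruzsa \eqref{f:Pl-R}--\eqref{f:Pl-R+} to control $|P-P|$, $|2P-2P|$, and the popular-difference set relative to $|P|$ when passing between $P$, its approximant, and the covering translates. Finally, the elements of $Y$ not captured by $X + (P -_{\kappa^2/2}P)$ must be few: if many remained, one could iterate to produce a large ``$X$'' contradicting the bound, and the leftover count is governed by the density threshold below which part (a) stops applying, i.e.\ $|E| \ll 1/(\omega\omega_\times)$.

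The main obstacle I anticipate is part (a): getting the pseudorandomness of $T$ inside $P$ in a usable quantitative form. The subtlety is that $T$ is an \emph{arbitrary} dense subset of $R$, not $R$ itself, so Lemma~\ref{lemma:convolutions} only controls $R \circ R$ directly; transferring to $T \circ T$ requires a Cauchy--Schwarz or spectral argument that trades density $\eta$ for a worse error, and one must check the threshold $|Y| \gg \eta/\omega_\times$ is still enough to beat it. A secondary difficulty in (b) is making ``quasi-disjoint covering'' precise so that the exceptional set $E$ genuinely has size $O(1/(\omega\omega_\times))$ rather than something depending on $K$ or $\eta$ — this likely requires choosing the covering greedily with respect to the measure induced by $T$ rather than by counting, and carefully separating the ``density too small for (a)'' elements from the rest.
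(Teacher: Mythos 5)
Your overall plan closely tracks the paper's proof in both parts, but the obstacle you flag in part~(a) is not actually there, and the way you set up part~(b) would make the exceptional set $E$ harder to control than it needs to be.

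For part~(a), you do not need any two-sided approximation of $T\circ T$, and no density $\eta$ is lost in passing from $R$ to $T$. Since $T\subseteq R\subseteq I:=W\cap\sqrt[r_1]{W_1}\cap\cdots\cap\sqrt[r_k]{W_k}$, one has the pointwise monotonicity $r_{T-T}(\lambda)\le r_{R-R}(\lambda)\le r_{I-I}(\lambda)$; Lemma~\ref{lemma:convolutions} applied to the indicator of $I$ gives, for $\lambda\neq 0$,
\[
r_{I-I}(\lambda)=\omega_\times^2\,r_{W-W}(\lambda)+O\!\left(M^{2k+2}\sqrt{p}\right),
\]
and the $\varepsilon$-loss in passing from $W$ to $P$ adds an error $O(\varepsilon\omega\omega_\times^2 p)$. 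So $\max_{\lambda\neq 0}r_{T-T}(\lambda)\ll\omega_\times^2|P|$ with no $\eta$ anywhere; the $\eta^2$ in $|Y+T|\gg\eta^2|P|$ enters solely because $|T|\sim\eta\omega_\times|P|$. Your Cauchy--Schwarz computation $|Y+T|\ge|Y|^2|T|^2/\sum_s r_{Y+T}(s)^2$ is exactly Proposition~\ref{proposition:set_intersect} in disguise: with $k=|T|$ and $\ell\ll\omega_\times^2|P|$, the threshold $|Y|\gg k/\ell\sim\eta/\omega_\times$ makes the off-diagonal term dominant and yields $|Y+T|\gg k^2/\ell\sim\eta^2|P|$. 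Your proposed $(T\circ T)(u)\approx\eta^2\omega_\times^2(W\circ W)(u)$ would require $T$ itself to be pseudorandom in $R$, which is false in general and, fortunately, unnecessary.

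For part~(b), the paper covers all of $\F_p$ (not just $Y$) by at most $2/\kappa$ translates of $P-_{\kappa^2/2}P$ using Proposition~\ref{proposition:popular-covering}, so the number of translates is bounded a priori. One then sets $Y_x:=Y\cap(x+P-_{\kappa^2/2}P)$, declares $x$ popular if $|Y_x|\gg\eta/\omega_\times$, and takes $E$ to be the union of the $Y_x$ over unpopular $x$; this gives $|E|\ll(1/\kappa)\cdot(\eta/\omega_\times)\ll 1/(\omega\omega_\times)$ at once, with no iteration. Your greedy selection of $X\subseteq Y$ loses this clean count and conflates the two roles of $X$ (covering and contributing to $|Y+T|$). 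Finally, the $L^7$ does not come loosely from ``repeated Pl\"unnecke--Ruzsa'': it is precisely $|4P-3P|\le L^7|P|$. One defines a graph on the popular translates with $x\sim x'$ when $(x+3P-3P)\cap(x'+3P-3P)\neq\varnothing$; then every neighbour $x'$ satisfies $x'+P\subseteq x+4P-3P$, and Corollary~\ref{corollary:set-intersect} with ground set $G_x:=x+4P-3P$ bounds the degree by $2L^7$, producing an independent set $X''$ with $|X''|\ge|X'|/(2L^7+1)$ whose translates $Y_{x''}+T$ are pairwise disjoint, and part~(a) plus $|Y+T|\le K|P|$ gives $|X'|\ll KL^7/\eta^2$.
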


\begin{remark}
    In applications, $\eta$ has order $K^{-O(1)}$ and is `small', while $\omega_{\times}$ has order $\alpha^{O(1)}$ and is `very small'. 
\end{remark}

\begin{remark}
    Although we will only apply this result for \emph{dense} sets $P_i, W_i$, it is worth noting that it works for sets with cardinalities as small as $\Omega(p^{1 - c}), c > 0$.
\end{remark}

\subsection{Proof of Lemma~\ref{lemma:X+T-growth-structure}\textup{(a)}}

The main idea is that $R$ behaves similarly to a pseudorandom subset of $P$.
The next two propositions make this precise; both are simple consequences of
Lemma~\ref{lemma:convolutions}. Define
\[
    I := W \cap \sqrt[r_1]{W_1} \cap \ldots \cap \sqrt[r_k]{W_k}.
\]

\begin{proposition}\label{proposition:I-I}
Assume $\wiener{W}, \wiener{W_1}, \ldots, \wiener{W_k} \le M$.
Then for any $\lambda \neq 0$,
\[
    r_{I-I}(\lambda)
    = \omega_\times^{2}  r_{W-W}(\lambda)
    + O\!\left(M^{2k+2}\sqrt{p}\right).
\]
\end{proposition}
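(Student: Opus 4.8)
\textbf{Proof proposal for Proposition~\ref{proposition:I-I}.}

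The plan is to express $r_{I-I}(\lambda)$ as the correlation $(f\circ f)(\lambda)$ of the single function
\[
    f(x) = W(x)\,W_1(x^{r_1})\cdots W_k(x^{r_k}),
\]
which is precisely $I$'s indicator; note that $f \in \mathcal{W}(k,M)$ with $S_0(f) = W$, $S_i(f) = W_i$, since $\wiener{W},\wiener{W_i}\le M$ and the tuple $(1,r_1,\ldots,r_k)$ is coprime (and bounded) by hypothesis. First I would invoke Lemma~\ref{lemma:convolutions}, which gives
\[
    \rho_*(f, \delta_f S_0(f)) \ll M^{2k+2}\sqrt{p},
\]
i.e.\ for every $\lambda \neq 0$,
\[
    (f\circ f)(\lambda) = \delta_f^2 \,(W\circ W)(\lambda) + O\!\left(M^{2k+2}\sqrt{p}\right).
\]
Here $\delta_f = \prod_{i=1}^k \delta_i(f) = \prod_{i=1}^k (|W_i|/p) = \omega_1\cdots\omega_k = \omega_\times$, so $\delta_f^2 = \omega_\times^2$. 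Since $(f\circ f)(\lambda) = r_{I-I}(\lambda)$ and $(W\circ W)(\lambda) = r_{W-W}(\lambda)$ for indicator functions, this is exactly the claimed identity.

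The only genuine point to check is the bookkeeping that $I$'s indicator really equals $f$ and that its self-correlation is the representation function: $I = \{x : x \in W,\ x^{r_i}\in W_i\ \forall i\}$, so $I(x) = W(x)\prod_i W_i(x^{r_i}) = f(x)$, and $r_{I-I}(\lambda) = \#\{(x,y): x-y=\lambda,\ x,y\in I\} = \sum_{x-y=\lambda} f(x)f(y) = (f\circ f)(\lambda)$ since $f$ is real-valued $0/1$ (so $\overline{f}=f$). Likewise $r_{W-W}(\lambda) = (W\circ W)(\lambda)$.

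I do not anticipate a real obstacle here; the proposition is a direct specialization of Lemma~\ref{lemma:convolutions} to a single membership-type function, and the entire content is identifying the mean component $\delta_f S_0(f)$ with $\omega_\times \cdot W$ and reading off $\delta_f^2 = \omega_\times^2$. The one thing to be slightly careful about is that the error term in Lemma~\ref{lemma:convolutions} is stated for $\rho_*$, i.e.\ uniformly over $\lambda \neq 0$, which is exactly the regime of the proposition ("for any $\lambda \neq 0$"); no statement is made at $\lambda = 0$, consistent with the $\rho_*$ (rather than $\rho$) formulation.
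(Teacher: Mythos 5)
Your proposal is correct and matches the paper's proof exactly: both treat the indicator of $I$ as the function $f \in \mathcal{W}(k,M)$ with $S_0(f) = W$ and $S_i(f) = W_i$, and then apply Lemma~\ref{lemma:convolutions} and read off $\delta_f = \omega_\times$. Your version simply spells out the routine bookkeeping (identifying $r_{I-I} = f \circ f$, $\delta_f^2 = \omega_\times^2$, and the $\rho_*$-versus-$\rho$ distinction at $\lambda \neq 0$) that the paper leaves implicit.
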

\begin{remark}
    Here we really need that exponents $r_i$ are coprime to $p - 1$, which is evident from the following example with $r_1 = 2$: 
    \[
    W := \{x: \omega_1 p + 10 \leqslant x \leqslant (\omega_1 + \omega) p + 10\}, \quad 
    W_1 := \{x: 0 \leqslant x \leqslant \omega_1 p\}.
    \]
    It is clear from the definition that sets $W + W$ and $W_1 - W_1$ are disjoint (provided $\omega_1, \omega_2$ are sufficiently small). It is also clear, that $r_{W - W}(1) = |W| - 1 \approx \omega p$. On the other hand, $r_{I - I}(1) = 0$, where $I:= W \cap \sqrt{W_1}$ and this is far from $\omega_1^2 r_{W - W} (1)$. Indeed, if there was $x \in I$ such that the element  $y = x + 1$ is also in $I$, we would obtain $y^2 - x^2 = (y - x)(y + x) = y + x \in W + W$. On the other hand, $y^2 - x^2 \in W_1 - W_1$, which is a contradiction. 

\end{remark}

\begin{proof}
Treat $I$ as its indicator function. Since $I \in \mathcal{W}(k,M)$,
Lemma~\ref{lemma:convolutions} gives
\[
    \bigl|r_{I-I}(\lambda) - \omega_\times^{2} r_{W-W}(\lambda)\bigr|
    \ll M^{2k+2}\sqrt{p},
\]
as required.
\end{proof}

\begin{proposition}\label{proposition:R-R}
For any $\lambda \neq 0$ one has
\[
    r_{R-R}(\lambda)
    \le
    \omega_\times^{2}  r_{P-P}(\lambda)
    + O\!\left(\varepsilon \omega \omega_\times^{2}p\right).
\]
\end{proposition}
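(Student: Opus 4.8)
The goal is to control $r_{R-R}(\lambda)$ for $\lambda \neq 0$, where $R = P \cap \sqrt[r_1]{P_1} \cap \cdots \cap \sqrt[r_k]{P_k}$ with $P \subseteq W$, $P_i \subseteq W_i$ of relative density $\geq 1-\varepsilon$. The strategy is to compare $R$ to the analogous intersection $I = W \cap \sqrt[r_1]{W_1} \cap \cdots \cap \sqrt[r_k]{W_k}$ built from the ``clean'' sets with bounded Wiener norm, using Proposition~\ref{proposition:I-I} for the latter, and then to pay for the symmetric differences.

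\textbf{Step 1: Reduce to the clean sets.} Since $P \subseteq W$ we have $R \subseteq I$, hence $r_{R-R}(\lambda) \le r_{I-I}(\lambda)$ trivially, but that bound involves $r_{W-W}(\lambda)$ rather than $r_{P-P}(\lambda)$, so it is not yet what we want. Instead I would write $R = I \setminus (I \setminus R)$ and estimate $|I \setminus R|$: the symmetric difference $R \triangle I$ is contained in $(P \triangle W) \cup \bigcup_i \sqrt[r_i]{P_i \triangle W_i}$, and since each $r_i$ is generic (bounded gcd with $p-1$), $|\sqrt[r_i]{P_i \triangle W_i}| \ll |P_i \triangle W_i| \le \varepsilon |W_i| \le \varepsilon p$, so $|I \setminus R| \le |R \triangle I| \ll \varepsilon p$ (with the implied constant absorbing the $k+1$ terms and the gcd factors).

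\textbf{Step 2: Transfer the count.} Writing $R = I \cap R$ and using $r_{R-R}(\lambda) = \sum_x R(x)R(x+\lambda)$, I bound $R(x)R(x+\lambda) \le I(x)I(x+\lambda) - [\text{correction}]$; more simply, $r_{R-R}(\lambda) \le r_{I-I}(\lambda)$ is too lossy because of the $W$ versus $P$ discrepancy, so instead I would go the other direction: $R \subseteq \sqrt[r_1]{P_1} \cap \cdots \cap \sqrt[r_k]{P_k}$ is contained in the ``$P_i$-intersection'', and one should compare that to the ``$W_i$-intersection with $W$ replaced by $P$''. Concretely, set $I' := P \cap \sqrt[r_1]{W_1} \cap \cdots \cap \sqrt[r_k]{W_k}$ (replacing only the algebraic factors by the clean ones). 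Then $R \subseteq I'$ with $|I' \setminus R| \ll \varepsilon p$ by the same symmetric-difference bound as in Step 1 (now only the $k$ algebraic factors contribute, and $P \subseteq W$ keeps $I' \in \mathcal{W}(k, p^{o(1)})$ after identifying $P$ with a bounded-Wiener-norm set --- wait, $P$ need not have small Wiener norm). The cleaner route: apply the remark after Lemma~\ref{lemma:convolutions} which approximates $f \circ g$ for $f,g \in \mathcal{W}(k,M)$; take $f = g$ to be the indicator of $\sqrt[r_1]{W_1} \cap \cdots \cap \sqrt[r_k]{W_k}$ weighted, but we want $P$ in front. I would instead directly expand: $r_{R-R}(\lambda) \le \sum_x \big(\prod_i W_i(x^{r_i})\big)\big(\prod_i W_i((x+\lambda)^{r_i})\big) P(x) P(x+\lambda)$, then bound $P(x)P(x+\lambda) \le 1$ where it hurts and use that the ``defect'' $W_i - P_i$ contributes $O(\varepsilon \omega \omega_\times^2 p)$ after summing; the function $x \mapsto \prod_i W_i(x^{r_i})$ has Wiener norm $p^{o(1)}$ and lies in $\mathcal{W}(k, p^{o(1)})$, so Lemma~\ref{lemma:convolutions} gives its correlation at $\lambda \neq 0$ equals $\omega_\times^2 \cdot (\text{const}) + O(p^{1/2+o(1)})$.

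\textbf{Step 3: Assemble.} Putting these together: $r_{R-R}(\lambda) \le \omega_\times^2 \, r_{P-P}(\lambda) + O(\varepsilon \omega \omega_\times^2 p) + O(p^{1/2 + o(1)})$, and since in the intended regime $\omega, \omega_\times$ are bounded below by fixed powers of $\alpha$ while $p^{1/2+o(1)} = o(\varepsilon \omega \omega_\times^2 p)$ whenever $\varepsilon \gg p^{-1/2 + o(1)}$ (which holds since we only care about $\varepsilon$ down to $\omega\omega_\times$, a fixed positive power of $\alpha$), the Weil/Bombieri error is absorbed into the $O(\varepsilon \omega \omega_\times^2 p)$ term, yielding the claimed inequality. \textbf{The main obstacle} is bookkeeping the passage from $W_i$ to $P_i$ cleanly: one must be careful that replacing a clean set $W_i$ by its dense subset $P_i$ inside an algebraic intersection changes the relevant correlation by at most $O(\varepsilon \cdot (\text{density factors}) \cdot p)$ and not more --- this requires expanding the product $\prod_i W_i(x^{r_i}) = \prod_i P_i(x^{r_i}) + (\text{telescoping defect terms})$ and checking that each defect term, after summing over $x$ with the shift, is genuinely of size $\varepsilon \omega \omega_\times^2 p$ (using that deleting $\varepsilon$-fraction from one coordinate while the others stay dense gives a product density drop of exactly that order, by the telescoping identity in Proposition~\ref{proposition:prods-and-diffs}).
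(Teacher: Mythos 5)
Your proposal contains a genuine gap, and ironically the gap is created precisely by dismissing the step that works. You write that $r_{R-R}(\lambda)\le r_{I-I}(\lambda)$ ``involves $r_{W-W}(\lambda)$ rather than $r_{P-P}(\lambda)$, so it is not yet what we want'' and later that it is ``too lossy because of the $W$ versus $P$ discrepancy.'' But that discrepancy is not lossy at all: since $P\subseteq W$ and $|W\setminus P|\le\varepsilon|W|$, expanding $W(x)=P(x)+(W\setminus P)(x)$ gives the elementary one-line bound
\[
    r_{W-W}(\lambda)\le r_{P-P}(\lambda)+2|W\setminus P|\le r_{P-P}(\lambda)+2\varepsilon|W|,
\]
and after multiplying by $\omega_\times^2$ this produces exactly the claimed error term $O(\varepsilon\omega\omega_\times^{2}p)$. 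That is the whole proof: $r_{R-R}(\lambda)\le r_{I-I}(\lambda)$ because $R\subseteq I$, then Proposition~\ref{proposition:I-I} gives $r_{I-I}(\lambda)=\omega_\times^{2}r_{W-W}(\lambda)+O(M^{2k+2}\sqrt p)$, and the bound above converts $r_{W-W}$ to $r_{P-P}$, with the $M^{2k+2}\sqrt p$ term absorbed since $M=p^{o(1)}$.

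Having rejected this route, your Step~2 never lands anywhere coherent. The alternative $I':=P\cap\sqrt[r_1]{W_1}\cap\cdots$ fails for the reason you notice mid-sentence: $P$ has no Wiener-norm control, so $I'\notin\mathcal W$ and Lemma~\ref{lemma:convolutions} does not apply. Your final attempt, bounding $r_{R-R}(\lambda)\le\sum_x g(x)g(x+\lambda)P(x)P(x+\lambda)$ with $g(x)=\prod_i W_i(x^{r_i})$ and then ``$P(x)P(x+\lambda)\le1$ where it hurts,'' throws away the $P$-dependence entirely: you would only recover $\omega_\times^2 p$, not $\omega_\times^2 r_{P-P}(\lambda)$. (And if instead you keep $P(x)P(x+\lambda)\le W(x)W(x+\lambda)$, you are back to $r_{I-I}(\lambda)$, i.e., the approach you rejected.) Step~3 then asserts the desired inequality without a supporting chain from Step~2. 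The fix is simply to embrace the $r_{I-I}$ bound and add the cheap $r_{W-W}\to r_{P-P}$ conversion.
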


\begin{proof}
Since $R \subseteq I$, we have $r_{R-R}(\lambda) \le r_{I-I}(\lambda)$.
By Proposition~\ref{proposition:I-I},
\[
    r_{I-I}(\lambda)
    =
    \omega_\times^{2} r_{W-W}(\lambda)
    + O\!\left(M^{2k+2}\sqrt{p}\right).
\]
Moreover, since $P \subseteq W$ and $|W\setminus P|\le \varepsilon |W|$,
\[
    r_{W-W}(\lambda) \le r_{P-P}(\lambda) + 2|W\setminus P|
    \le r_{P-P}(\lambda) + 2\varepsilon|W|
    = r_{P-P}(\lambda) + O(\varepsilon \omega p).
\]
Combining these estimates and using $M=p^{o(1)}$ yields
\[
    r_{R-R}(\lambda)
    \le
    \omega_\times^{2} r_{P-P}(\lambda)
    + O\!\left(\varepsilon \omega \omega_\times^{2}p\right),
\]
as claimed.
\end{proof}

\medskip

We also record the size of $R$ (and hence of $T$).
On the one hand, by the algebraic intersection property,
\[
    |R| \le |I|
    = \bigl|W \cap \sqrt[r_1]{W_1}\cap\cdots\cap \sqrt[r_k]{W_k}\bigr|
    = (1+o(1)) \omega \omega_\times p.
\]
On the other hand, since $|W_i\setminus P_i|\le \varepsilon |W_i|$ for all $i$,
\[
    |R|
    \ge |I| - |W\setminus P| - |W_1 \setminus P_1| - \ldots -|W_k \setminus P_k|
    = |I| - O(\varepsilon p),
\]
so $|R| \sim \omega \omega_\times p \sim \omega_\times |P|$ and $|T| \sim \eta |R| \sim \eta \omega_\times |P|$.

\medskip
\noindent
\emph{Proof of Lemma~\ref{lemma:X+T-growth-structure}(a).}
For distinct $y, y'$ we have
\begin{multline*}
    |(y + T) \cap (y' + T)| 
    = r_{T - T}(y - y')
    \leqslant r_{R - R}(y - y') \leqslant \\
    \leqslant \omega_\times^2 r_{P - P}(y - y') + O(\eps \omega \omega_\times^2 p) 
    \leqslant \omega_\times^2 |P| + O(\eps \omega \omega_\times^2 p).
\end{multline*}

Set
\[
    k := |T| \sim \eta \omega_\times |P|,
    \qquad
    \ell := \max_{i \neq j} |(y + T) \cap (y' + T)| 
        \ll \omega_\times^2 |P|.
\]
Since all translates $y + T$ are supported in $Y + T$, and $|Y| \gg k / \ell \sim \eta / \omega_\times$, we can apply Corollary~\ref{corollary:set-intersect-k2l} to obtain
\[
    |Y + T| \gg \frac{k^2}{\ell} \sim \eta^2 |P|.
\]
This completes the proof.
\qed

\subsection{Proof of Lemma~\ref{lemma:X+T-growth-structure}(b)}

Let $P \subseteq \F_p$ be a set with small doubling, i.e.\ $|P - P| \leqslant K|P|$.
It is well known (see, e.g., \cite[Section~2.4]{TV}) that the entire group $\F_p$
can be covered by a bounded number of translates of $P - P$ via the following greedy algorithm:
\begin{enumerate}
    \item Initialize $X := \varnothing$.
    \item At each step, if there exists $x \in \F_p$ such that
    $x + P$ is disjoint from all $x' + P$ with $x' \in X$, add $x$ to $X$.
    \item When no such $x$ exists, conclude that
    \[
        \F_p = \bigcup_{x \in X} (x + P - P).
    \]
\end{enumerate}
Clearly,
\[
    |X| \leqslant \frac{p}{|P|} \leqslant \frac{pK}{|P - P|}.
\]

\medskip

Unfortunately, even if $P$ has small doubling, the difference set $P - P$
does not necessarily possess the algebraic intersection property required in our arguments.
However, as shown in Section~\ref{sec:comb-wiener}, there exists a subset
$D \subseteq P - P$ that \emph{does} have this property and, moreover,
contains a set of popular differences.

A simple modification of the above algorithm then yields a covering by a bounded number
of translates of the popular difference set.
In this modified version, we allow the translates $x + P$ to have small intersections.
Since the trivial bound
$p \geqslant |X + P| = |X||P|$
is no longer available, we will use Corollary~\ref{corollary:set-intersect}
to control the size of $|X|$.

\begin{proposition}\label{proposition:popular-covering}
Let $P \subseteq \F_p$ have size $\kappa p$, and let $\varepsilon \in (0,1)$.
Then there exists a set $X \subseteq \F_p$ such that
\[
    \F_p = \bigcup_{x \in X} \bigl(x + P -_{\varepsilon \kappa^2} P \bigr),
    \qquad
    |X| \leqslant \frac{1}{\kappa(1 - \varepsilon)}.
\]
Moreover, the translates $x + P$ are almost disjoint: for any $x, x' \in X$,
\[
    |(x + P) \cap (x' + P)| \leqslant \varepsilon \kappa^2 p.
\]
\end{proposition}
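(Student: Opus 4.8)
The plan is to run the greedy covering algorithm described above, but with the weakened disjointness condition: at each step we add a new point $x$ to $X$ precisely when $x+P$ has small intersection (at most $\varepsilon\kappa^2 p$) with every translate $x'+P$ already present, i.e.\ when $r_{P-P}(x-x') \le \varepsilon\kappa^2 p$ for all $x'\in X$. When the process terminates, no such $x$ exists, which means that for every $y\in\F_p$ there is some $x\in X$ with $r_{P-P}(y-x) > \varepsilon\kappa^2 p$; equivalently $y - x \in P-_{\varepsilon\kappa^2}P$, so $y \in x + (P-_{\varepsilon\kappa^2}P)$. This gives the claimed covering, and the ``moreover'' part is automatic since any two distinct points of $X$ satisfy the small-intersection condition by construction (the one added later was tested against the one added earlier).

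It remains to bound $|X|$. Let $n := |X|$. The sets $\{x+P : x\in X\}$ all live inside the ground set $\F_p$, have common size $k := |P| = \kappa p$, and pairwise intersections at most $\ell := \varepsilon\kappa^2 p$. The hypothesis $\ell |G| = \varepsilon \kappa^2 p \cdot p < (\kappa p)^2 = k^2$ holds exactly when $\varepsilon < 1$, so Corollary~\ref{corollary:set-intersect} applies and yields
\[
    n \le \frac{|G|/k}{1 - \ell|G|/k^2}
      = \frac{p/(\kappa p)}{1 - \varepsilon\kappa^2 p^2/(\kappa^2 p^2)}
      = \frac{1/\kappa}{1-\varepsilon}
      = \frac{1}{\kappa(1-\varepsilon)},
\]
which is the desired bound. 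One should double-check that the termination argument produces a finite $X$ in the first place — but this is immediate since each new element of $X$ is genuinely new (distinct $x$ give distinct translates because $r_{P-P}(0)=|P| > \varepsilon\kappa^2 p$), and $\F_p$ is finite, so in fact $n$ is bounded a priori by $p$; the real content is the sharper bound from Corollary~\ref{corollary:set-intersect}.

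There is essentially no obstacle here: the statement is a routine combination of the classical greedy covering idea with the combinatorial Corollary~\ref{corollary:set-intersect} proved earlier. The only point requiring a modicum of care is matching the inequality $|(x+P)\cap(x'+P)| \le \varepsilon\kappa^2 p$ exactly with the definition of $P-_{\varepsilon\kappa^2}P$ — namely that ``at least $\varepsilon\kappa^2\cdot p$ representations'' is the threshold used in $+_\eps$ notation, so the complementary event ``fewer than $\varepsilon\kappa^2 p$ representations'' is what the algorithm enforces, and one must confirm the boundary case (exactly $\varepsilon\kappa^2 p$ representations) is handled consistently with the non-strict inequalities in the statement. This is a cosmetic issue that can be absorbed into the choice of strict versus non-strict inequality in the termination test.
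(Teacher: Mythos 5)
Your proof is correct and follows essentially the same path as the paper's: the same modified greedy algorithm with the $\varepsilon\kappa^2 p$ intersection tolerance, the same derivation that termination forces every point into some $x + (P -_{\varepsilon\kappa^2} P)$, and the same appeal to Corollary~\ref{corollary:set-intersect} with $G = \F_p$, $k = \kappa p$, $\ell = \varepsilon\kappa^2 p$ to bound $|X|$. The strict-versus-nonstrict boundary issue you flag is indeed purely cosmetic (the termination condition produces a strict inequality, which is more than enough to land in $P -_{\varepsilon\kappa^2} P$), and the paper glosses over it in exactly the way you suggest.
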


\begin{proof}
The modified algorithm proceeds as follows:
\begin{enumerate}
    \item Initialize $X := \varnothing$.
    \item At each step, if there exists $x \in \F_p$ such that
    \[
        |(x + P) \cap (x' + P)| \leqslant \varepsilon \kappa^2 p
        \quad \text{for all } x' \in X,
    \]
    then add $x$ to $X$.
    \item When no such $x$ exists, we claim that
    \[
        \F_p = \bigcup_{x \in X} (x + P -_{\varepsilon \kappa^2} P).
    \]
\end{enumerate}

To verify the claim, let $x \in \F_p$ be arbitrary.
Since $x \notin X$, there exists some $x' \in X$ such that
\[
    |(x + P) \cap (x' + P)| \geqslant \varepsilon \kappa^2 p.
\]
Thus there are at least $\varepsilon \kappa^2 p$ pairs $(p_1,p_2) \in P \times P$
with $x + p_1 = x' + p_2$, i.e. $x = x' + (p_2 - p_1)$. Hence $x \in x' + P -_{\varepsilon \kappa^2} P$, which proves the covering.

Finally, all sets $x + P$ have size $\kappa p$, and their pairwise intersections
are bounded by $\varepsilon \kappa^2 p$.
Applying Corollary~\ref{corollary:set-intersect} with $G = \F_p$ gives $|X| \leqslant \frac{1}{\kappa(1 - \varepsilon)}$.
\end{proof}

As a byproduct, we obtain
\[
    |P -_{\varepsilon \kappa^2} P| \geqslant (1 - \varepsilon)|P|.
\]
Although stronger results are known (see references in \cite{Semchankau_wrappers}),
it suffices for our purposes that
\begin{equation}\label{eq:byproduct}
    |P -_{\varepsilon \kappa^2} P| \gg |P|
    \qquad \text{for all } \varepsilon \in (0,1).
\end{equation}

\medskip

\noindent
\emph{Proof of Lemma~\ref{lemma:X+T-growth-structure}\textup{(b)}.}
Apply Proposition~\ref{proposition:popular-covering} with $\varepsilon = 1/2$ to the set $P$.
This yields a set $X$, with $|X| \leqslant 2/\kappa$, such that $\F_p$ is covered by the translates
\[
    x + P -_{\kappa^2/2} P, \qquad x \in X,
\]
and any two translates $x + P$, $x' + P$ intersect in at most $\kappa^2 p/2$ elements.

For each $x \in X$ define
\[
    Y_x := Y \cap \bigl(x + P -_{\kappa^2/2} P\bigr).
\]
Clearly,
\[
    Y = \bigcup_{x \in X} Y_x.
\]
Let
\[
    X' := \{\, x \in X : |Y_x| \gg \eta / \omega_\times \,\},
    \qquad
    E := \bigcup_{x \in X \setminus X'} Y_x.
\]
Then
\[
    |E|
    \leqslant |X| \max_{x \in X \setminus X'} |Y_x|
    \ll \frac{1}{\kappa} \cdot \frac{\eta}{\omega_\times}
    \ll \frac{\eta}{\omega\,\omega_\times}.
\]

\smallskip

We now define a graph with vertex set $X'$, where $x \sim x'$ if the sets
\[
    x + P - P + P
    \quad \text{and} \quad
    x' + P - P + P
\]
intersect.
Fix $x \in X'$. If $x' \sim x$, then
\[
    x' \in x + 3P - 3P,
\]
and hence
\[
    x' + P \subseteq x + 4P - 3P =: G_x.
\]
By the Plünnecke--Ruzsa inequality,
\[
    |G_x| = |4P - 3P| \leqslant \min(L^7 |P|,\, p).
\]

Since the translates $x' + P$ are almost disjoint,
Corollary~\ref{corollary:set-intersect}, applied with ground set $G = G_x$,
sets $\{x' + P : x' \sim x\}$,
and parameters $k = \kappa p$, $l = \kappa^2 p/2$, gives
\[
    \deg(x) = \#\{x' : x' \sim x\} \leqslant 2L^7.
\]
Here $\deg (x)$ is the degree of a vertex $x\in X'$ in our graph. 
It is well known that a graph on $n$ vertices with maximal degree $d$
contains an independent set of size at least $n/(d+1)$.
Therefore, there exists an independent set
\[
    X'' \subseteq X'
\]
of cardinality at least $|X'|/(2L^7 + 1)$, and the sets
\[
    x'' + P - P + P, \qquad x'' \in X'',
\]
are pairwise disjoint.

\smallskip

Now let $T \subseteq P$ satisfy $|Y + T| \leqslant K|P|$.
For each $x'' \in X''$ we have
\[
    Y_{x''} + T
    \subseteq (x'' + P - P) + T
    \subseteq x'' + P - P + P,
\]
and these sets are disjoint.
Consequently,
\[
    K|P|
    \geqslant |Y + T|
    \geqslant \sum_{x'' \in X''} |Y_{x''} + T|
    \overset{\text{Lemma~\ref{lemma:X+T-growth-structure}\textup{(a)}}}{\gg}
    |X''| \cdot \eta^2 |P|
    \gg \frac{|X'|}{L^7}\,\eta^2 |P|.
\]
Thus $|X'| \ll KL^7/\eta^2$.

Finally,
\[
    Y
    = \Bigl(\bigcup_{x \in X'} Y_x\Bigr) \cup E
    \subseteq
    \Bigl(\bigcup_{x \in X'} \bigl(x + P -_{\kappa^2/2} P\bigr)\Bigr) \cup E,
\]
which completes the proof.
\qed

\section{Proof of the $100\%$ Theorem}
\label{sec:100-percent}

Let $A_0, W_i, P_i$ for $i = 1, \ldots, k$ be as in the proof of
Theorem~\ref{thm:99-percent}.
In particular, recall that
\[
    \frac{|A_0|}{|A|} \geqslant 1 - o(1),
    \qquad
    \frac{|P_i|}{|W_i|} \geqslant 1 - o(1),
    \qquad
    A_0^{r_i} \subseteq P_i,
    \quad i = 1, \ldots, k.
\]

Let $\omega_1, \ldots, \omega_k$ denote the densities of $W_i$.
We recall that
\[
    (1 + o(1)) \alpha
    \leqslant \omega_1 \cdots \omega_k
    \leqslant \left(\frac{\omega_1 + \cdots + \omega_k}{k}\right)^k
    \leqslant (1 + o(1)) (K/k)^k \alpha.
\]
In particular, $\omega_i \sim K^{O(1)} \alpha^{1/k}$ for all $i$.
Thus the densities $\omega_i$, and hence the cardinalities $|P_i|$, are
$K$-comparable.

Moreover, by Proposition~\ref{proposition:comparable-calculus},
each set $P_i$ has doubling constant
\[
    L_i \ll K^{O(1)}.
\]

\medskip

The relative density $\eta$ of $A_0$ inside the algebraic intersection of
the sets $P_i$ satisfies
\[
    \eta
    :=
    \frac{|A_0|}
    {|\sqrt[r_1]{P_1} \cap \cdots \cap \sqrt[r_k]{P_k}|}
    \geqslant (1 - o(1)) \frac{k^k}{K^k}.
\]

\medskip

Let $A_1 := A \setminus A_0$.
Fix an index $i$, and choose an index $j = j(i)$ with $j \neq i$.
From the bound
\[
    |A^{r_1} + \cdots + A^{r_k}| \leqslant K \alpha^{1/k} p
\]
we deduce
\[
    |A_1^{r_i} + A_0^{r_j}|
    \leqslant K_i |P_j|,
    \qquad
    K_i := \frac{K \alpha^{1/k} p}{|P_j|}
    \sim \frac{K \alpha^{1/k}}{\omega_j}
    \ll K^{O(1)}.
\]

\medskip

Since the tuple $(r_1, \ldots, r_k)$ is coprime and bounded,
so is the tuple
\[
    \bigl(r_j/r_1,\ \ldots,\ 1,\ \ldots,\ r_j/r_k\bigr).
\]
Because of this and the inclusion
\[
    A_0^{r_j}
    \subseteq
    P_j \cap \sqrt[r_j/r_1]{P_1} \cap \cdots \cap \sqrt[r_j/r_k]{P_k}
\]
we may apply Lemma~\ref{lemma:X+T-growth-structure}\textup{(b)}
with the following choices:
\[
    Y := A_1^{r_i},
    \quad
    T := A_0^{r_j},
    \quad
    P := P_j,
    \quad
    \eta := \eta,
    \quad
    L := L_j,
    \quad
    K := K_i.
\]
This yields a set $X_i$ such that, after removing
$O(1/(\omega_1 \cdots \omega_k)) \ll O(1/\alpha)$ elements from $A_1$, we have
\[
    A_1^{r_i}
    \subseteq
    X_i + \bigl(P_j -_{\kappa_j^2/2} P_j\bigr),
    \qquad
    |X_i| \ll \frac{K_iL_j^7}{\eta^2} \ll K^{O(1)}.
\]

\medskip

By Proposition~\ref{proposition:F-closed}(3), there exists a set
$D_i \in \mathcal{F}$ such that
\[
    P_j -_{\kappa_j^2/2} P_j \subseteq D_i \subseteq P_j - P_j.
\]
Moreover, by~\eqref{eq:byproduct} we have $|D_i| \gg |P_j|$, so the
cardinalities of the sets $D_i$ are $K$-comparable.
By Proposition~\ref{proposition:F-closed}(2), the set
$X_i + D_i$ belongs to $\mathcal{F}$.
Recall also that $A_0^{r_i} \subseteq P_i$, and define
\[
    P_i' := (X_i + D_i) \cup P_i.
\]
Thus, after removing $O(1/\alpha)$ elements from $A$, we obtain
\[
    A^{r_i} \subseteq P_i',
\]
and the sets $P_i'$ belong to family $\mathcal{F}$ by
Proposition~\ref{proposition:F-closed}(1), and therefore have algebraic intersection property, by Proposition \ref{proposition:aip}.

\medskip

Finally, Proposition~\ref{proposition:comparable-calculus} implies that the
sets $P_i'$ are $K$-compatible, and their cardinalities are
$K$-comparable to $\alpha^{1/k} p$.

\medskip

This completes the proof of Theorem~\ref{thm:100-percent} by taking
$P_i := P_i'$ for $i = 1, \ldots, k$.

\section{Appendix}
\label{sec:appendix_dist}

In this section we discuss further connections between additive distance introduced in Subsection \ref{ssec:new_normI} and other metrics.

First of all, let us remark that 
for any $s>1$ the quantity $\rho(f)$ controls the higher energies \cite{SS_higher}
\begin{equation}\label{f:E_*}
    \E_s (f) := \sum_x (f \circ f)^s (x) \le \rho^{2s-2} (f) \|f\|^2_1 \,,
\end{equation}
and, similarly, if $B\subseteq A$, 
then for the entropies of $A$ and $B$ (see \cite{Ruzsa_entropy,GGMT_Marton}) 
\[
    H(A) := \frac{1}{|A|^{2}} \sum_x (A\circ A) (x) \log \left( \frac{|A|^2}{(A\circ A) (x)} \right)
    =
    2\log |A| - \frac{1}{|A|^{2}} \sum_x (A\circ A) (x) \log (A\circ A) (x) \,,
\]
we have
\begin{equation}\label{f:H(A)_H(B)}
    |H(A) - H(B)| 
    \ll \frac{\sqrt{\rho (A,B)} |B-B|\log |B|}{|B|^2} 
\end{equation}
and thus the entropies  of $A$ and $B$ are close in terms of $\rho(A,B)$. 

\begin{remark}
    Thanks to formula \eqref{f:H(A)_H(B)} we know that entropies of sets with small distance $\rho$ are close. 
    What can be said about the sumsets? 
    It is easy to see that the smallness of the distance $\rho$ does not 
    give us 
    too much. 
    Indeed, let $A \subseteq \Gr$ be a random set, $|A|=\d \Gr$, 
    and 
    $\| (A \circ A) (x) - \d^2 N \cdot \d_0 (x) \|_\infty \le \eps$, where $\eps = C\d^2 N$ and $C>1$ is a large constant.
    Then one can delete $O(\d^{-1})$ points from $A$ such that we still have $\| (A \circ A) (x) - \d^2 N \cdot \d_0 (x) \|_\infty \ll \eps$, 
    and such that $A-A$ misses these $O(\sqrt{N} \eps^{-1/2})$ points. 
\end{remark}

Secondly,  if $\|f\|_\infty \le 1$, then 
using formula \eqref{f:Fourier_*},  it is easy to see that $\rho(f) \le \sqrt{N}$ and hence the condition $\rho(f) = o(\sqrt{N})$ guaranties that all Fourier coefficients of the function $f$ can be estimated nontrivially. 
Further, if the support of $f$ is small, then 
it is easy to construct a function with small quantity $\rho$ but large Fourier coefficients, take, e.g., $f(x)=A(x) - |A|/|H| \cdot H(x)$, where $A\subseteq H$ is a random set and $H\le \F_2^n$, say.
Thus, estimate  \eqref{f:Fourier_*} is useful only for functions with large support.

In another direction, take any random set $A \subseteq \Gr$, $|A| \gg N$ and let $f=f_A (x)+c \d_0 (x)$, $c \sim \sqrt{|A|}$. Then the Fourier transform of $f$ is small, namely, $\| \FF{f} \|_\infty \ll |A|^{1/2+o(1)}$ but $\rho^2 (f) \gg c^2 \gg |A|$ is close to its maximal value. 
Thus there are functions with small Fourier coefficients but large value $\rho (f)$. 
Consider another example. 


\begin{exm}
    Let $A\subseteq \F_p$ be a perfect difference set (see, e.g., \cite{Hall_CT}). 
    Then $\| A\circ A - E \|_\infty = 0$, where $E=1-\d_0 (0)$ but $\| A * A - E \|_\infty = 1$. 
    Thus, it is important to take the convolution $\circ$ in \eqref{def:rho} instead of $*$.
    Similarly, if $B=-A$, then $\| A\circ A - B \circ B\|_\infty = 0$ but $\rho^2 (A-B) = \max_{x} (2 - (A*A)(x) - (A*A) (-x)) \geqslant 1$. 
    So, the functions $A\circ A$, $B\circ B$ can be close in the sense of $L_\infty$, but at the same time quite far from each other in the sense of the pseudometric  $\rho$. 
\end{exm}

Finally, as we have seen above (consult formula \eqref{f:Fourier_*}) if $\rho(f)$ is small, then $f$ has small Fourier coefficients. 
In the usual way  \cite{Gowers_m}, one can say that a set $A\subseteq \Gr$ is $\eps$--uniform in the sense of $\rho$ if $\rho(f_A) = \rho (f,\d) \le \eps$.
It is easy to see that $\rho$--uniformity does not imply the higher--order  $U^k$--uniformity, see \cite{Gowers_m}. Indeed, consider the Gowers example \cite{Gowers_m}, namely, let $A(x) = P(x^2)$, where $P\subset \F_p$ is an arithmetic progression, $|P| \gg p$. 
Then it is known (see \cite{Gowers_m}) that $A(x)$ is not $U^3$--uniform. 
On the other hand, using calculations of Lemma \ref{lemma:convolutions}, one can show that $\rho (f_A) \ll p^{1/4+o(1)}$ and hence $A$ is 
$\rho$--uniform.
Nevertheless, Lemma \ref{lemma:convolutions} provides us examples of sets that are $U^k$--uniform for all $k$, e.g., 
the set $A(x) := P(x) P(x^{-1})$ is $\rho$--uniform inside $P$ and, moreover, {\it all} set $A_{s_1,\dots,s_l} := A \cap (A+s_1) \cap \dots \cap (A+s_l)$ are $\rho$--uniform inside $P$. 


\bibliographystyle{abbrv}

\bibliography{bibliography}{}

\bp

\author{A.S. Semchankau\\
Department of Mathematical Sciences, Carnegie Mellon University\\
Wean Hall 6113, Pittsburgh, PA 15213, USA\\ 
\tt{asemchan@andrew.cmu.edu}
}

\bp 

\author{I.D. Shkredov\\
Department of Mathematics, Purdue University\\ 150 N. University Street, West Lafayette, IN 47907--2067, USA\\ 
\tt{ishkredo@purdue.edu}
}

\end{document}